\documentclass[11 pt]{amsart}
\usepackage{latexsym,amsmath,amsfonts,graphicx}
\usepackage{subfigure}
\usepackage{amssymb}
\usepackage{float}
\usepackage[dvips]{color}
\setlength{\topmargin}{0.5cm}
\setlength{\textheight}{22cm}
\setlength{\oddsidemargin}{0.6cm}
\setlength{\evensidemargin}{0.6cm}
\setlength{\textwidth}{15cm}
\setlength{\arraycolsep}{1.5pt}

\newtheorem{theorem}{Theorem}[section]
\newtheorem{thm}[theorem]{Theorem}

\newtheorem{cor}{Corollary}[section]
\newtheorem{lemma}{Lemma}[section]
\newtheorem{remark}{Remark}[section]
\newtheorem{defi}{Definition}[section]

\newtheorem{example}{Example}[section]
\numberwithin{equation}{section}

\def\cal{\mathcal }
\def\R{\mathbb R}

\def\Z{\mathbb Z}

\def\mathscr{\mathcal }

\def\diam{\text{diam}}


\newcommand{\bzero}{{\boldsymbol{0}}}

\newcommand{\bz}{{\boldsymbol{z}}}


\newcommand{\mi}{{\mathbf{i}}}
\newcommand{\bA}{{\mathbf A}}
\newcommand{\be}{{\mathbf e}}
\newcommand{\bu}{{\mathbf{u}}}

\newcommand{\ba}{{\boldsymbol{a}}}

\newcommand{\bb}{{\mathbf{b}}}
\newcommand{\bc}{{\mathbf{c}}}

\newcommand{\SD}{{\mathcal D}}

\newcommand{\SJ}{{\mathcal J}}

\begin{document}

\title[Tilings of convex polyhedral cones]{Tilings of convex polyhedral cones and topological properties of self-affine tiles}


\author{Ya-min Yang}
\address{Institute of applied mathematics, College of Science, Huazhong Agriculture of University, Wuhan,430070, China.}
\email{yangym09@mail.hzau.edu.cn}

\author{Yuan Zhang$\dag$}
\address{Department of Mathematics and Statistics, Central China Normal University, Wuhan, 430079, China\\
}
\email{yzhang@mail.ccnu.edu.cn}

\date{\today}
\thanks{$\dag$ The correspondence author.}
\thanks {This work is supported by NSFC Nos. 11431007, 11601172,
 and Fundamental Research Funds for Central Universities no.2662015PY217,
  and Self-Determined Research Funds of CCNU from the Colleges' Basic Research and Operation of MOE under Grant CCNU17XJ034.}

\thanks{{\bf 2000 Mathematics Subject Classification:}  52C22, 51M20\\
{\indent\bf Key words and phrases:}\ convex polyhedral cone, translation tiling, self-affine tile.
}

\begin{abstract}
{ Let $\ba_1,\dots, \ba_r$ be vectors in a half-space of $\R^n$.
We call
$$C=\ba_1\R^++\cdots+\ba_r \R^+$$ a convex polyhedral cone, and call $\{\ba_1,\dots, \ba_r\}$
a generator set of $C$. A generator set with the minimal cardinality is called a frame.
We investigate the translation tilings of convex polyhedral cones.

Let $T\subset \R^n$ be a compact set such that $T$ is the closure of its interior, and  $\SJ\subset \R^n$
be a discrete set. We say $(T,\SJ)$ is a translation tiling of $C$ if $T+\SJ=C$ and any two translations of
$T$ in $T+\SJ$ are disjoint in Lebesgue measure.

We show that if the cardinality of a frame of $C$ is larger than $\dim C$, the dimension of $C$, then $C$ does not
admit any translation tiling;
if the cardinality of a frame of $C$ equals $\dim C$, then the translation tilings of $C$ can be reduced to the translation tilings of $(\Z^+)^n$.  As an application, we characterize all the  self-affine tiles possessing
 polyhedral corners, which generalizes a result of Odlyzko [A. M. Odlyzko, \textit{Non-negative digit sets in positional number systems}, Proc. London Math. Soc., \textbf{37}(1978), 213-229.].
}
\end{abstract}

\maketitle



\section{\textbf{Introduction}}\label{sec:intro}

Let $\ba_1,\ba_2,\cdots, \ba_m$ be $m$ non-zero vectors in a half space of $\R^n$,
that is, there is a non-zero vecter $\beta\in \R^n$ such that the inner product
$\langle \ba_j, \beta\rangle>0$ for all $j=1,\dots, m.$
 We call the set of all non-negative combinations of these vectors
$$C=\boldsymbol{a}_1\R^++\cdots +\boldsymbol{a}_m\R^+=\{\lambda_1 \boldsymbol{a}_1+\cdots+\lambda_m \boldsymbol{a}_m: \ \text{all} \ \lambda_i\geq 0\}$$
a \emph{convex polyhedral cone}.
In this case, we also say $C$ is spanned by $\{\ba_1,\dots, \ba_m\}$.

The convex polyhedral cone is an important object in convex analysis, see for instance, Rockafellar \cite{Rock}.
The main purpose of the present paper is to characterize the translation tilings of convex polyhedral cones.

  \begin{defi}\label{def:packing} {\rm Let $X\subset \R^n$, $T\subset \R^n$ be a compact set, and $\SJ\subset \R^n$ be a (finite or infinite) discrete set.

 We  say that $(T,\SJ)$ is a \emph{packing} of $X$ if
 $T+\SJ\subset X$, and $T+t_1$ and $T+t_2$ are disjoint in Lebesgue measure for any $t_1\neq t_2\in \SJ$.

  $(T,\SJ)$ is called a \emph{covering} of $X$ if $X\subset T+\SJ$.

 $(T,\SJ)$ is called a \emph{translation tiling} of $X$ if it is a packing as well as a covering of $X$.
 In this case, we call $T$ a $X$-tile and $(T,\SJ)$ a $X$-tiling.
 (In literature, usually it is assumed in addition that $T$ is the closure of the interior of $T$.)

 $(T,\SJ)$ is called a \emph{local tiling} of convex polyhedral cone $C$,
  if it is a packing of $C$ and it covers a neighborhood of $\bzero$ in $C$.
}
\end{defi}

\begin{remark}\label{rem:normal}{\rm Let $(T, {\cal J})$ be a local tiling  of   a convex polyhedral cone $C$.
Let $T+t_1$ be the tile containing $\bzero$. Set $T'=T+t_1$ and ${\cal J}'=\cal J-t_1$, then
$T'+{\cal J}'$ is a local tiling   of $C$. It follows that $\bzero\in T', \bzero\in \SJ'$ and consequently
$T'\subset C$, ${\cal J}'\subset C$.
Therefore, from now on, without loss of generality, we always  assume that
\begin{equation}\label{eq:norm}
\bzero\in T, \ \bzero\in \SJ, \  T\subset C, \text{ and } \SJ\subset C.
\end{equation}
}
\end{remark}

\subsection{Translation tilings of  convex polyhedral cones}

Let $C$ be a convex polyhedral cone.
The dimension of $C$, denoted  by $\dim C$, is the minimum of the dimensions of subspaces of $\R^n$ containing $C$.
We call $A=\{\ba_1, \dots, \ba_m\}$ a \emph{frame} of $C$, if $A$ spans $C$, and any proper subset of $A$ does not.
It is  seen that the frame $A$ of a convex polyhedral cone $C$ is unique if we require all members of $A$ to be unit vectors.

\begin{defi}{\rm We say $C$ is \emph{regular}, if
the  cardinality of a frame of $C$ equals $\dim C$, and \emph{irregular} otherwise.
}
\end{defi}

Denote $\R^+=\{x\in \R;~x\geq 0\}$
and $\Z^+=\{x\in \Z;~x\geq 0\}$.
Clearly, an $n$-dimensional convex polyhedral cone $C$ is regular if and only if $C$ is the  image of $(\R^+)^n$ under an invertible linear transformation.

We show that if $T$ can tile a `large' ball of $C$ at the origin, then not only $C$ must be regular, but also $T$ must be a union of translations of unit cubes up to a linear transformation.
Denote by $B_n(x,r)$, or simply $B(x,r)$, the ball in $\R^n$ with center $x$ and with radius $r$.

\begin{thm}\label{thm:Main-1} Let $C$ be a convex polyhedral cone.
If $(T,\SJ)$ is a local tiling of $C$ which covers $C\cap B(\bzero,R)$ for some $R>\diam(T)$,
then

$(i)$ $C$ is regular.

$(ii)$ if in addition $T=\overline{T^\circ}$, then there exist a finite set $E\subset (\Z^+)^n$,
and a linear transformation $\varphi$ of $\R^n$  such that
$
T=\varphi(E+[0,1]^n).
$
\end{thm}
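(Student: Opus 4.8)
The plan is to begin with two harmless reductions. Replacing $\R^n$ by the linear span of $C$, I may assume $\dim C=n$. Moreover, since the generators lie in an open half-space $\{\langle x,\beta\rangle>0\}$, every nonzero $x\in C$ satisfies $\langle x,\beta\rangle>0$, so $C$ is automatically pointed and, as recorded in the excerpt, ``regular'' is the same as ``$C$ is a simplicial cone'', i.e. $C=\varphi((\R^+)^n)$ for a linear isomorphism $\varphi$. With the normalization $\bzero\in T\cap\SJ$ and $T,\SJ\subset C$ of Remark~\ref{rem:normal} in force, the cornerstone of the whole argument will be a \emph{corner-filling lemma}: $T\supseteq C\cap B(\bzero,\delta)$ for some $\delta>0$. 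To prove it I would note that for $t\in\SJ\setminus\{\bzero\}$ we have $t\in C\setminus\{\bzero\}$, hence $\langle t,\beta\rangle>0$ and $-t\notin C$, so $\operatorname{dist}(\bzero,T+t)=\operatorname{dist}(-t,T)\ge\operatorname{dist}(-t,C)>0$. By local finiteness only finitely many translates meet $B(\bzero,1)$, so the minimum of these distances over $t\neq\bzero$ is a positive number $\delta$; since $(T,\SJ)$ covers $C\cap B(\bzero,\delta)$ and no translate other than $T$ itself meets $B(\bzero,\delta)$, the cone-corner $C\cap B(\bzero,\delta)$ must lie inside $T$.

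Next I would exploit the \emph{nesting/staircase structure} forced by $T\subseteq C$. For each $t\in\SJ$ one has $T+t\subseteq C+t\subseteq C$ and, translating the corner-filling lemma, $T+t\supseteq(C+t)\cap B(t,\delta)$; thus every tile fills its own shifted cone-corner, and the tiles are organized by the partial order $t\preceq t'\iff t'-t\in C$ on $\SJ$, with $\bzero$ as minimum. Assigning to each covered point $x$ the apex $t(x)$ of the tile covering it gives $t(x)\preceq x$, so the tiling is an ``order-ideal/staircase'' decomposition of $C\cap B(\bzero,R)$; this is exactly the mechanism that should let me reduce everything to tilings of $(\Z^+)^n$. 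For part $(ii)$, after applying $\varphi^{-1}$ I may assume $C=(\R^+)^n$, so that $C$ has exactly the $n$ coordinate edges $\be_i\R^+$ and $T\supseteq[0,\delta]^n$. Along each edge the cone looks locally like $\R\be_i\oplus(\R^+)^{n-1}$, and running the corner-filling argument transverse to the edge together with the one-dimensional analysis of Odlyzko on the ray $\be_i\R^+$ should produce a smallest period $c_i\be_i\in\SJ$ in the $i$-th direction. These periods generate a lattice to which both $\SJ$ and $T$ are aligned; rescaling the $i$-th coordinate by $c_i^{-1}$ (an adjustment absorbed into the final $\varphi$) makes the lattice $\Z^n$, and the staircase structure together with the hypothesis $T=\overline{T^\circ}$ then forces $T$ to be a finite union of unit cells, $T=E+[0,1]^n$ with $E\subset(\Z^+)^n$.

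It remains to prove the regularity statement $(i)$, which I expect to be the main obstacle. The idea is to run the edge-period extraction above along \emph{every} extreme ray of $C$: each of the $m$ frame rays $\ba_i\R^+$ yields a period $p_i=c_i\ba_i\in\SJ$, and the nested corners $C+p_i\subset C$ must fit together with $T$ under the packing condition. If $C$ were irregular, i.e. $m>n=\dim C$, the directions $\ba_1,\dots,\ba_m$ would be linearly dependent, and I would aim to derive a contradiction from the fact that the $(n-1)$-dimensional cross-section of $C$ is then a polytope with more than $n$ vertices, hence not a simplex: such a cross-section should not be tileable by the corner-cells produced by the staircase decomposition, since the apex cell already occupies the full cone-angle of one vertex while the linear dependence among the $\ba_i$ makes the remaining vertices over-determined. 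Turning this incompatibility into a rigorous contradiction --- equivalently, showing directly that a non-simplicial pointed corner admits no translation tiling of a ball of radius $R>\diam(T)$ --- is the crux, and I anticipate it will require an induction on $\dim C$ through the facets of $C$, using that each facet is itself a lower-dimensional pointed cone to which the corner-filling lemma again applies.
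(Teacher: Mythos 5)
There is a genuine gap, and it sits exactly where you place your own caveat: part $(i)$ is never proved. Your corner-filling lemma is correct (it is a strengthening of the paper's Lemma \ref{lem:unique}), and the staircase/order-ideal picture is sound, but for regularity you offer only the heuristic that a cross-section of an irregular cone is a polytope with more than $n$ vertices whose vertices are ``over-determined,'' and then you explicitly defer the contradiction --- which is essentially the theorem itself restated. Worse, the heuristic does not lead to a reduction as stated: a translation tiling of $C$ does \emph{not} induce a tiling of a transverse cross-section, because translation by any nonzero $t\in\SJ$ moves such a cross-section off itself; so ``the cross-section should not be tileable by the corner-cells'' has no precise meaning. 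The paper's actual route is quite different and is where all the work lies: it slices $C$ by $2$-planes \emph{parallel to a feasible $2$-face $F$}, which is the correct choice precisely because translations by $t\in\SJ\cap F$ preserve such slices (Lemma \ref{lem:up}); it then uses a Fubini argument (Lemma \ref{lem:packing}) to show the packing property descends to $2$-dimensional Lebesgue measure on almost every slice, combines this with the existence and metric control of corner-cut slices for irregular cones with regular boundary (Theorems \ref{thm:slice} and \ref{thm:metric-one}), and finally invokes a genuinely two-dimensional impossibility theorem (Theorem \ref{thm:tiling}): a planar corner-cut region admits no large local tiling by translates of a single compact set. That last theorem is itself a delicate geometric argument (lowest points, broken boundary lines, an intermediate-value crossing argument) with no counterpart in your sketch. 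The only ingredient of the paper's induction you do anticipate is the facet-reduction (Theorem \ref{boundary tiling}), but without the slice machinery and the planar theorem, the induction has no base case to bottom out on.

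Part $(ii)$ of your proposal is directionally right --- it parallels the paper's Section 8 --- but the decisive steps are asserted rather than proved. That Odlyzko's one-dimensional result ``should produce'' periods $c_i\be_i\in\SJ$, and that the staircase structure then ``forces'' $\SJ$ and $T$ to align with the lattice generated by them, is precisely the content of the paper's Lemma \ref{info-1} and Theorem \ref{info-2}, which require a careful double induction (over faces of $(\R^+)^n$, then over the partial order $\prec$ on $(\Z^+)^n$) with explicit positive-measure overlap constructions to rule out non-integral translates and partially covered cubes. In particular, nothing in your sketch rules out an element of $\SJ$ lying strictly inside a unit cell, which is the scenario Lemma \ref{info-1} and the ``last cube'' analysis in Theorem \ref{info-2} are designed to kill. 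So both halves of the proposal would need substantial new arguments to become a proof, and for $(i)$ the missing argument is the paper's main theorem-level contribution.
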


Sometimes we call $(T,\SJ)$ in the above theorem a `large' local tiling.
As a consequence of Theorem \ref{thm:Main-1}, we have

\begin{cor}\label{Main-1} An irregular convex polyhedral cone   admits no translation tiling.
\end{cor}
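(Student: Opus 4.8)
The plan is to deduce the corollary directly from Theorem~\ref{thm:Main-1} by a short contradiction argument, so the only real content is checking that a global translation tiling meets the hypotheses of that theorem. Suppose, contrary to the claim, that an irregular convex polyhedral cone $C$ admits a translation tiling $(T,\SJ)$; I will derive that $C$ must be regular, which is absurd.

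First I would observe that any translation tiling is in particular a local tiling. Indeed, by Definition~\ref{def:packing} a translation tiling is a packing of $C$ satisfying $T+\SJ=C$; since $C\subset T+\SJ$ it certainly covers a neighbourhood of $\bzero$ in $C$, so it qualifies as a local tiling. Hence Remark~\ref{rem:normal} applies: after translating $T$ and $\SJ$ by the vector that brings the tile containing $\bzero$ into standard position (such a tile exists because $\bzero\in C\subset T+\SJ$), I may assume without loss of generality that $\bzero\in T$, $\bzero\in\SJ$, $T\subset C$ and $\SJ\subset C$. This relabelling leaves the collection of tiles unchanged, so the normalized pair still satisfies $T+\SJ=C$.

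Next I would verify the remaining hypothesis of Theorem~\ref{thm:Main-1}, namely that the tiling covers $C\cap B(\bzero,R)$ for some $R>\diam(T)$. Because $T$ is compact, $\diam(T)<\infty$, so I may simply take, say, $R=\diam(T)+1$. Since the normalized pair covers all of $C$, it covers $C\cap B(\bzero,R)$ for this $R$ (indeed for every $R$). Thus $(T,\SJ)$ is a local tiling of $C$ that covers $C\cap B(\bzero,R)$ with $R>\diam(T)$, which is precisely the hypothesis of the theorem; note that only part $(i)$ is needed, so the extra assumption $T=\overline{T^\circ}$ is irrelevant here.

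Finally, Theorem~\ref{thm:Main-1}$(i)$ yields that $C$ is regular, contradicting the assumption that $C$ is irregular, so no such tiling can exist. I expect no genuine obstacle at this stage: all the difficulty is already absorbed into Theorem~\ref{thm:Main-1}, and the corollary is obtained simply by recognizing that a genuine global tiling is \emph{a fortiori} a `large' local tiling. The only minor points to handle with care are invoking the compactness of $T$ to guarantee finiteness of $\diam(T)$, and using Remark~\ref{rem:normal} to place the origin inside a tile so that the normalized hypotheses of the theorem hold.
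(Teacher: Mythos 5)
Your proof is correct and is essentially the paper's own argument: the paper derives Corollary~\ref{Main-1} immediately from Theorem~\ref{thm:Main-1}(i) by noting that a global translation tiling is \emph{a fortiori} a local tiling covering $C\cap B(\bzero,R)$ for any $R>\diam(T)$. Your careful verification of the hypotheses (normalization via Remark~\ref{rem:normal}, finiteness of $\diam(T)$, and that part (ii)'s extra assumption is not needed) fills in exactly the routine details the paper leaves implicit.
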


\begin{cor}\label{Main-2}
If $(T,\SJ)$ is a tiling of $(\R^+)^n$ and $T=\overline{T^\circ}$, then there exists $E, \SJ'\subset (\Z^+)^n$,
and a positive diagonal matrix  $U=\text{diag~}(u_1,\dots,u_n)$ such that
$$
UT=E+[0,1]^n, \quad U\SJ=\SJ',
$$
and $(E, \SJ')$ is a translation tiling of $(\Z^+)^n$.
\end{cor}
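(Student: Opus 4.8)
The plan is to deduce the corollary from Theorem~\ref{thm:Main-1} applied to the regular cone $C=(\R^+)^n$. A tiling $(T,\SJ)$ of $(\R^+)^n$ is in particular a local tiling that covers $(\R^+)^n\cap B(\bzero,R)$ for every $R$, so the hypotheses of Theorem~\ref{thm:Main-1} hold for any $R>\diam(T)$; by Remark~\ref{rem:normal} I may assume the normalization \eqref{eq:norm}, so $\bzero\in T$ and $\bzero\in\SJ$. Part (ii) then gives a finite set $E_0\subset(\Z^+)^n$ and an invertible linear map $\varphi$ with $T=\varphi(E_0+[0,1]^n)$. Since $\varphi(\bzero)=\bzero$ and $\bzero\in T$, the point $\bzero$ lies in some cube $e+[0,1]^n$ with $e\in E_0\subset(\Z^+)^n$, which forces $e=\bzero$; thus $\bzero\in E_0$.

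The heart of the matter is to replace the general map $\varphi$ by a positive diagonal matrix. Applying $\varphi^{-1}$ to $T+\SJ=(\R^+)^n$ shows that the axis-parallel unit-cube complex $D:=E_0+[0,1]^n=\varphi^{-1}(T)$ tiles the cone $C_1:=\varphi^{-1}((\R^+)^n)$ via the translation set $\varphi^{-1}(\SJ)$. Every tile is a translate of $D$, so $\partial C_1$ is covered by axis-parallel unit $(n-1)$-cells; a flat facet built out of such cells must lie in a coordinate hyperplane, and since the facets of the cone pass through the apex $\bzero$, each lies in some $\{x_i=0\}$. As $C_1$ is linearly isomorphic to $(\R^+)^n$ it is simplicial with exactly $n$ facets, so $C_1=\{x:\sigma_i x_i\ge 0,\ i=1,\dots,n\}$ for some signs $\sigma_i\in\{+1,-1\}$; the inclusion $[0,1]^n\subset D\subset C_1$ forces every $\sigma_i=+1$, whence $C_1=(\R^+)^n$. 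Therefore $\varphi$ is an automorphism of $(\R^+)^n$, i.e.\ a monomial map $\varphi=PD$ with $P$ a permutation matrix and $D=\text{diag}(d_1,\dots,d_n)$, $d_i>0$. I expect this boundary analysis to be the main obstacle; once it is in place the rest is bookkeeping.

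To finish, absorb the permutation by conjugation: set $U:=PD^{-1}P^{-1}$, which is again a positive diagonal matrix, and put $E:=PE_0\subset(\Z^+)^n$ and $\SJ':=U\SJ$. Then $UT=PD^{-1}P^{-1}PD(E_0+[0,1]^n)=P(E_0+[0,1]^n)=E+[0,1]^n$, using $P[0,1]^n=[0,1]^n$, while $U((\R^+)^n)=(\R^+)^n$; applying $U$ to $T+\SJ=(\R^+)^n$ and noting that the invertible linear map $U$ preserves Lebesgue-null sets, I get that $(E+[0,1]^n,\SJ')$ is a tiling of $(\R^+)^n$. It then remains to descend to the integer cone, which is the reduction advertised in the abstract: since $\bzero\in E$, the corner cube $[0,1]^n$ sits at the apex of $(\R^+)^n$, and a greedy induction over the grid cubes of $(\R^+)^n$ (covering at each stage the lexicographically least uncovered cube) forces every translate to be integral, so $\SJ'\subset(\Z^+)^n$. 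Each $m\in(\Z^+)^n$ is then the lower corner of a unit cube meeting the interior of exactly one tile-cube, which yields a unique decomposition $m=e+t$ with $e\in E$, $t\in\SJ'$; this is precisely the assertion that $(E,\SJ')$ is a translation tiling of $(\Z^+)^n$.
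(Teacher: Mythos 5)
Your proof is correct, but it takes a genuinely different route from the paper's. The paper disposes of the corollary in a few lines: it rescales by a diagonal matrix so that the smallest nonzero points of $\SJ$ on the coordinate axes become $\be_1,\dots,\be_n$, and then quotes Theorem~\ref{info-2}, whose single induction yields $UT=E+[0,1]^n$ and $U\SJ\subset(\Z^+)^n$ simultaneously; indeed, in the paper Theorem~\ref{thm:Main-1}(ii) is itself just a restatement of Theorem~\ref{info-2}, so no unknown linear map $\varphi$ ever has to be identified. You instead treat Theorem~\ref{thm:Main-1}(ii) as a black box, and the heart of your argument---showing that $\varphi$ must preserve the orthant---has no counterpart in the paper: you pull the tiling back by $\varphi^{-1}$, note that the boundary of the simplicial cone $\varphi^{-1}((\R^+)^n)$ is covered by countably many axis-parallel unit $(n-1)$-cells, force each of its $n$ facets into a coordinate hyperplane by an $(n-1)$-dimensional measure argument (using that facets pass through $\bzero$ and that $[0,1]^n\subset D$ fixes the signs), conclude that $\varphi$ is monomial, and conjugate away the permutation. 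That part is sound, and it has the virtue of relying only on statements from the introduction. The cost comes at the end: your one-sentence ``greedy induction'' establishing $\SJ'\subset(\Z^+)^n$ compresses what is exactly the content of the paper's Theorem~\ref{info-2}(i) together with Lemma~\ref{info-1}, i.e.\ the several pages of work that Section 8 exists to do. To be fair, in your reduced setting the claim is genuinely easier than the paper's general statement: once the tile is known to be $E+[0,1]^n$ with $\bzero\in E$, a tile with integral translate either contains a given grid cube or meets it in a null set, at most one element of $\SJ'$ can lie in any half-open grid cube (else their corner cubes overlap), and the induction over a well-ordering of the grid cubes then closes --- any tile protruding from the least uncovered cube into the already covered region is forced to coincide with an integral tile, while a lone translate strictly inside that cube cannot cover it alone. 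So your sketch can be completed without the paper's delicate case analysis, but those details are the real substance of the step and need to be written out rather than asserted.
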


Therefore, to characterize the translation tilings of regular convex polyhedral cones,
we need only characterize the translation tilings of the special cone $(\R^+)^n$,
and this can be further reduced to the problem of characterization of $(\Z^+)^n$-tilings.

We call $A+B$  the \emph{direct sum} of $A$ and $B$, and denoted by $A\oplus B$, if
 every element $x\in A+B$ has a unique decomposition as $x=a+b$ with $a\in A, b\in B$.
For $A, B\subset (\Z^+)^n$,  we say $(A, B)$ is a $(\Z^+)^n$-complementing pair
if $A\oplus B=(\Z^+)^n$, furthermore, we say $(A, B)$ is a $(\Z^+)^n$-tiling if $\# A<\infty$.
(We remark that it is possible that both $A, B$ are infinite sets.)

\begin{remark}{\rm
Rao, Yang and Zhang \cite{Yang} characterizes all the $(\Z^+)^n$-complementing pairs, which generalizes the results
 of de Bruijn \cite{deB} and Niven \cite{Niven}, which settled the case $n=1$ and $n=2$, respectively.
}\end{remark}

\subsection{Self-affine tiles possessing  polyhedral  corners}

Let $\mathbf{A}\in M_n(\mathbb{R})$ be an expanding matrix (\textit{i.e.}, all its eigenvalues have moduli larger than $1$) such that  $m=|\det(\mathbf{A})|$ is an integer larger than $1$.
  Let $\mathcal{D}=\{\mathbf{d}_0,\mathbf{d}_1,\cdots,\mathbf{d}_{m-1}\}$ be a  subset of $\mathbb{R}^n$, which we call the \emph{digit set}.
It is well known (\cite{Hut81,LW96}) that
there exists a unique non-empty compact set $T:=T(\mathbf{A},{\mathcal D})$ satisfying the set equation
\begin{equation}
T=\underset{\boldsymbol{d}\in{\cal D}}\bigcup \bA^{-1}(T+\boldsymbol{d}).
\end{equation}
We call $T(\bA,\SD)$ a \emph{self-affine tile} and $\SD$ a \emph{tile digit set},
  if $T(\bA, \SD)$  has non-void interior. A self-affine tile can   tile $\mathbb{R}^n$ by translation (\cite{LW96}).
Self-affine tiles have been studied extensively in literature (\cite{Bandt, Kenyon1992, Grochenig94,
LW96, LW96I, LW97, LauRao2001, LauLaiRao1, LauLaiRao2, Vince}),
since it is related to many fields of mathematics, such as number theory, dynamical system, spectral theory and wavelet, \textit{etc.}
As an application of Theorem \ref{Main-1} and \ref{Main-2}, we study the topological properties of $T(\bA,\SD)$.

\begin{defi}{\rm
Let $T(\bA,\SD)$ be a self-affine tile of $\R^n$.
We say $T(\bA,\SD)$ has a \emph{polyhedral corner}, if
there exists a point $x_0\in T(\bA,\SD)$, a real $r>0$,  and a convex polyhedral cone $C$,
such that
$$
B(x_0,r)\cap T(\bA,\SD)=x_0+B(\bzero,r)\cap C.
$$
}
\end{defi}

The following result generalizes a one-dimensional result of Odlyzko \cite{Od78}.

\begin{thm}\label{Main-3} If a self-affine tile $T(\bA,\SD)$ of $\R^n$
has a polyhedral corner, then there exists an affine transformation $\varphi$
such that  $\varphi(T(\bA,\SD))$ is a $(\R^+)^n$-tile.
Consequently, $T(\bA,\SD)$ is a finite union of translations of $n$-dimensional unit cubes up to an affine transformation.
\end{thm}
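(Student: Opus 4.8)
The plan is to magnify the corner by the expanding map until it fills a ball exceeding $\diam(T)$, and then invoke Theorem \ref{thm:Main-1}. Let $x_0,r,C$ be the corner data, so that $B(x_0,r)\cap T=x_0+B(\bzero,r)\cap C$, and iterate the set equation to get $\bA^N T=T+\SD_N$ with $\SD_N=\SD+\bA\SD+\cdots+\bA^{N-1}\SD$, a measure‑disjoint tiling of $\bA^N T$ by translates of $T$. Since $\bA$ is expanding, the least singular value $s_N$ of $\bA^N$ satisfies $s_N\to\infty$; and because $\bA^N(B(x_0,r)\cap T)=\bA^N x_0+\bigl(\bA^N C\cap\bA^N B(\bzero,r)\bigr)$, the magnified tile $\bA^N T$ coincides with the translated cone $p_N+\bA^N C$ (where $p_N:=\bA^N x_0$) throughout $B(p_N,s_N r)$.

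First I would fix $N$ with $s_N r>2\diam(T)$ and set $R=s_N r-\diam(T)>\diam(T)$. Keeping those translates $T+d$ ($d\in\SD_N$) that meet $B(p_N,R)$: each lies inside $B(p_N,s_N r)$, hence inside $p_N+\bA^N C$, and together they cover $(p_N+\bA^N C)\cap B(p_N,R)$. Recentred at $p_N$, this is a local tiling of the cone $\bA^N C$ by translates of $T$ covering $\bA^N C\cap B(\bzero,R)$ with $R>\diam(T)$, so Theorem \ref{thm:Main-1} applies. Part (i) gives that $\bA^N C$ is regular, whence $C$ is regular since the invertible linear map $\bA^{-N}$ preserves the cardinality of a frame and the dimension. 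As a self-affine tile satisfies $T=\overline{T^\circ}$, part (ii) yields a finite $E\subset(\Z^+)^n$ and a linear $\varphi$ with $T=\varphi(E+[0,1]^n)$; this is exactly the closing assertion that $T$ is a finite union of translates of unit cubes up to an affine map.

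It remains to produce an affine $\Phi$ with $\Phi(T)$ a $(\R^+)^n$-tile. The set $\varphi^{-1}(T)=E+[0,1]^n$ is an axis-aligned polycube, and its corner cone at $\varphi^{-1}(x_0)$ equals $\varphi^{-1}_{\mathrm{lin}}(C)$. A corner cone of an axis-aligned polycube is spanned by coordinate directions, and being regular (an $n$-generator, $n$-dimensional cone) it must be a full coordinate octant $\sum_i\pm\be_i\R^+$; so after a signed coordinate permutation $\sigma$ (which preserves $\Z^n$ and unit cubes) we obtain an affine $\Phi=\sigma\circ\varphi^{-1}$ with $\Phi(T)=E+[0,1]^n\subset(\R^+)^n$, $\bzero\in E$, and corner cone $(\R^+)^n$. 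Writing $S_0:=\Phi(T)$, transporting the magnified local tiling through $\Phi$ yields a local tiling of $(\R^+)^n$ by $S_0$ covering a ball of radius larger than $\diam(S_0)$, and $S_0$ still tiles $\R^n$.

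The remaining step — completing this seed to a tiling of the whole orthant — is the crux, and the genuine generalization of Odlyzko's theorem. It cannot hold for arbitrary translational tiles (orthant tilings need not exist, and $\R^n$-tilings need not be periodic), so I would lean on the self-affine structure: $S_0$, being a conjugate of $T(\bA,\SD)$, admits a lattice (hence periodic) tiling of $\R^n$, which after the integer normalization reads $E\oplus\Gamma=\Z^n$ for a sublattice $\Gamma\subset\Z^n$. One then manufactures $\SJ'\subset(\Z^+)^n$ with $E\oplus\SJ'=(\Z^+)^n$: the covering of the large corner ball furnishes the tiles near $\bzero$, and the characterization of $(\Z^+)^n$-complementing pairs in \cite{Yang}, combined with the periodicity, lets one extend this consistently to a $(\Z^+)^n$-tiling. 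Then $(S_0,\SJ')$ tiles $(\R^+)^n$, so $\Phi(T)=S_0$ is a $(\R^+)^n$-tile, as required. I expect this completion — turning a large but finite corner patch into a global orthant tiling — to be the main difficulty.
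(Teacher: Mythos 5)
Your first two paragraphs reproduce the paper's proof exactly. The paper takes $\ell_k$, the inradius of $\bA^k B(\bzero,1)$ (your least singular value $s_N$), defines $\SJ_k$ to be the set of $t\in\SD_k$ whose tile $T+t$ meets $\bA^kx_0+\bA^kC\cap B(\bzero,r\ell_k-\diam(T))$, checks exactly as you do that $T+\SJ_k$ covers this set and is contained in $\bA^kx_0+\bA^kC$ (the packing property being inherited from the measure-disjointness of $T+\SD_k=\bA^kT$), chooses $k$ with $\ell_k r>2\diam(T)$, and invokes Theorem \ref{thm:Main-1} to conclude that $\bA^kC$, hence $C$, is regular and that $T=\varphi(E+[0,1]^n)$. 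Up to that point your proposal is correct and matches the paper step for step.

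The divergence lies in what you rightly refuse to take for granted. The paper's proof stops there: it reads the conclusion of Theorem \ref{thm:Main-1}(ii) as finishing Theorem \ref{Main-3}, and never exhibits a translation set making an affine image of $T$ actually tile $(\R^+)^n$; the first assertion of the theorem receives no separate argument. You are correct that this assertion is not a formal consequence of $T=\varphi(E+[0,1]^n)$ alone: a finite $E\subset(\Z^+)^n$ containing $\bzero$ need not be a $(\Z^+)^n$-complementing set (e.g.\ $E=\{0,1,3\}$ when $n=1$), so the tiling property must come from somewhere. Your normalization step is also sound and is an observation the paper does not make: a pointed convex corner cone of a polycube at a lattice vertex is a union of orthants, hence a single orthant, so a signed permutation puts it in standard position. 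But your completion of the last step is, as you yourself say, only a sketch: the passage from large finite corner patches to a global $(\Z^+)^n$-tiling (via periodicity of an $\R^n$-tiling and \cite{Yang}) is not carried out, and it is genuinely delicate — note in particular that the local tilings produced by magnification live on the \emph{varying} cones $\bA^kC$, so even a compactness (K\"onig-type) argument must first fix a single normalization independent of $k$. The honest summary: your proposal proves everything the paper's own proof proves, by the same method; the remaining assertion is a gap in your write-up, but it is equally a gap (or an unstated leap) in the paper's proof of Theorem \ref{Main-3}.
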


We close this section with some notations.
We use $\{\be_1,\dots, \be_n\}$ to denote the canonical basis of $\R^n$.
Let $\partial A$  denote the boundary of $A$,  $A^\circ$ denote  the interior of $A$, and $\overline G$ denote the closure of $G$.

\medskip

The paper is organized as follows. In Sections 2--4, we show that an irregular convex polyhedral cone $C$ has
$2$-dimensional slices which are corner-cut regions.
In Sections 5-6, we show that if $C$ has a `large' local tiling, then a $2$-dimensional corner-cut slice of $C$ also has
a `large' local tiling; however, we show in Section 7 that this is impossible.  Theorem
\ref{Main-1}(i) is proved in Section 6. Section 8 is devoted to the translation tilings of $(\R^+)^n$;
 Theorem \ref{Main-1}(ii) and Corollary \ref{Main-2} are proved there. Section 9, the last section, studies the topological properties of self-affine tiles and Theorem \ref{Main-3} is proved there.

\section{\textbf{Preliminaries  on convex  polyhedral cones}}\label{sec:cone}

 First, we recall some notions about convex set,  see \cite{Rock, Ger1951}.
Let $F$ be a convex subset of the convex set $C$. We say $F$ is a \emph{face} of $C$, if  any closed line segment in $C$ with a relative interior in $F$ has both endpoints in $F$.
 An \emph{extreme point} of a convex set is one which is not a proper convex linear combination of any two points of the set.

Let $\ba_1,\ba_2,\cdots, \ba_m$ be non-zero  vectors in  $\R^n$ located in a half space. Recall that
$$C=\boldsymbol{a}_1\R^++\cdots +\boldsymbol{a}_m\R^+$$
is called a \emph{convex polyhedral cone}. Clearly $C$ is a closed set.

For a set $F\subset \R^n$, we use $\text{span}~(F)$ to denote the smallest subspace containing $F$.
Then the dimension of $F$, denoted  by $\dim F$, is  the dimension of
the subspace ${span}(F)$. Moreover, we call $F$ a $r$-face of $C$,
if $F$ is a face of $C$ with dimension $r$.

A convex polyhedral cone $C$ has exactly one extreme point, or $0$-face, the origin.
$1$-faces  of $C$ are the  half-lines $\ba_j\R^+$  with $\ba_j$ in the frame of $C$, and we  call $\ba_j\R^+$ an \emph{extreme direction}.

We list some facts about  faces of  convex polyhedral cones.

\begin{lemma}\label{lem:Ger}  Let $C$ be a  convex polyhedral cone with dimension $n$. Then

$(i)$  \textnormal{(Theorem 21 in \cite{Ger1951}.)} A convex cone in $\partial C$ is contained in an $(n-1)$-face $Q$.
Consequently, $\partial C$ is the union of the $(n-1)$-faces of $C$.

$(ii)$ \textnormal{(Theorem 22 in \cite{Ger1951})} Let  $Q$ be an $(n-1)$-face of $C$, then
  $$Q=C\cap \text{span}(Q).$$

$(iii)$ \textnormal{(Theorem 27 in \cite{Ger1951}.)}  If $G$ and $F$ are faces of $C$ and $F\subset G$, then $F$ is a face of $G$.

$(iv)$ If $G$ is a face of $C$, then any face of $G$ is also a face of $C$.
\end{lemma}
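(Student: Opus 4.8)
Parts $(i)$--$(iii)$ are quoted verbatim from Gerstenhaber \cite{Ger1951} (Theorems 21, 22, 27), so the plan is to cite them and to concentrate the actual argument on part $(iv)$, which asserts the transitivity of the face relation: a face of a face of $C$ is again a face of $C$.

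For $(iv)$ I would argue straight from the paper's definition of a face, reading ``a relative interior in $F$'' as ``a relative interior point lying in $F$'': a convex subset $F\subseteq C$ is a face of $C$ precisely when every closed segment contained in $C$ that has a relative interior point in $F$ must have both of its endpoints in $F$. First I would record the harmless bookkeeping: $F$ is convex, being a face of $G$, and $F\subseteq G\subseteq C$, so $F$ is a legitimate convex subset of $C$ to test against this criterion.

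The core is a two-step chaining argument. Take any closed segment $[x,y]\subseteq C$ possessing a relative interior point $z\in F$. Since $F\subseteq G$, the point $z$ lies in $G$; as $z$ is a relative interior point of $[x,y]\subseteq C$ and $G$ is a face of $C$, the defining property of $G$ forces $x,y\in G$. Because $G$ is convex this upgrades to $[x,y]\subseteq G$. Now the very same segment is a closed segment in $G$ whose relative interior still contains the point $z\in F$; applying the defining property of $F$ as a face of $G$ yields $x,y\in F$. Thus every closed segment of $C$ with a relative interior point in $F$ has both endpoints in $F$, which is exactly the condition for $F$ to be a face of $C$.

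I do not anticipate a genuine obstacle here: the whole content is that a single relative interior point propagates the endpoint-containment condition first through $G$ and then through $F$. The only place demanding care is the faithful use of the definition---in particular, that the relative interior point $z$ is what triggers both applications, and that the convexity of $G$ is precisely what promotes ``$x,y\in G$'' to ``$[x,y]\subseteq G$'', so that the segment becomes eligible to be tested against $F$ inside $G$.
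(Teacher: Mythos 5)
Your proposal is correct. The paper itself offers no proof of this lemma at all: parts (i)--(iii) are simply attributed to Gerstenhaber's Theorems 21, 22, and 27, and part (iv) is stated without argument as a known fact, so your handling of (i)--(iii) matches the paper exactly. Your two-step chaining argument for (iv) --- a relative interior point $z\in F\subseteq G$ of a segment $[x,y]\subseteq C$ forces $x,y\in G$ since $G$ is a face of $C$, convexity of $G$ then gives $[x,y]\subseteq G$, and applying the face property of $F$ inside $G$ gives $x,y\in F$ --- is the standard proof of transitivity of the face relation, and it correctly supplies the one step the paper leaves implicit; in particular you are right that the convexity of $G$ is exactly what makes the segment eligible for the second application of the definition.
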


We shall use the following easy facts.

\begin{lemma}\label{lem:easy1}Let $C\subset \R^n$ be a convex polyhedral cone. Let $\ba,\bb\in \R^n$.

(i)  If a ray $R=\ba+\bb\R^+$ belongs to  $C$, then $\bb\in C$.

(ii) If $\ba\R^+$ is an extreme direction of $C$, and $\bb$ is a vector in $C$ but not in $\ba\R^+$,
then  $\ba-\bb\not\in C$.

\end{lemma}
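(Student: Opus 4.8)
The plan is to prove both parts using only the elementary structure of a convex polyhedral cone: closure under nonnegative scalar multiples and under addition (both immediate from $C=\ba_1\R^++\cdots+\ba_m\R^+$), the closedness of $C$ as a subset of $\R^n$ noted in Section~\ref{sec:cone}, and the pointedness that follows from all generators lying in a common half-space.

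For $(i)$, I would combine the scaling property with closedness. For every $t>0$ the point $\ba+t\bb$ lies on the ray $R\subset C$, so $t^{-1}(\ba+t\bb)=t^{-1}\ba+\bb\in C$. Letting $t\to\infty$ gives $t^{-1}\ba+\bb\to\bb$, and since $C$ is closed we conclude $\bb\in C$.

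For $(ii)$, I would argue by contradiction. Suppose $\ba-\bb\in C$. Since $\bb\in C$ and $\ba\in C$, additive closure gives $\ba+\bb\in C$, so the whole segment joining $\ba-\bb$ and $\ba+\bb$ lies in $C$ and has midpoint $\ba$. Because $\bb\notin\ba\R^+$ forces $\bb\neq\bzero$, this segment is nondegenerate and $\ba$ is one of its relative interior points; moreover $\ba$ lies on the face $\ba\R^+$. The defining property of a face then forces both endpoints $\ba\pm\bb$ into $\ba\R^+$, and subtracting the two resulting identities shows $\bb\in\ba\R$, say $\bb=c\ba$ for some real $c$. Since $\bb\notin\ba\R^+$ we must have $c<0$, so both $\bb$ and $-\bb=|c|\ba$ belong to $C$. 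But the generators satisfy $\langle\ba_i,\beta\rangle>0$ for some $\beta$, hence $\langle x,\beta\rangle\geq0$ on all of $C$ with equality only at $\bzero$; applying this to $\bb$ and to $-\bb$ yields $\langle\bb,\beta\rangle=0$, so $\bb=\bzero$, contradicting $\bb\neq\bzero$.

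The only genuinely subtle point is the last step of $(ii)$: the face property by itself delivers $\bb\in\ba\R$, the full line through $\ba$, not the one-sided ray, so it cannot on its own exclude the case where $\bb$ is a negative multiple of $\ba$. Ruling out that case is precisely where the half-space hypothesis, equivalently the pointedness of $C$, must be invoked; this is the feature that separates a genuine cone from a linear subspace, and I expect it to be the crux of the argument.
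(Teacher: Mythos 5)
Your proof is correct and takes essentially the same route as the paper: part (i) is exactly the paper's scaling-and-closedness argument (the paper uses $\ba/k+\bb\to\bb$), and part (ii) is the detailed unfolding of the paper's one-line remark that the claim ``follows from the fact that $\ba\R^+$ is a $1$-face.'' Your closing observation --- that the face property alone only yields $\bb\in\ba\R$ and that the half-space (pointedness) hypothesis is what excludes negative multiples --- is precisely the content the paper leaves implicit, so you have filled in the gap rather than diverged from the argument.
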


\begin{proof} (i) The sequence  $(\ba+k\bb)_{k\geq 1}$  belongs to $C$ implies that
$\ba/k+\bb\in C$, hence the limit $\bb$ belongs to $C$.
 (ii)  follows from the fact that $\ba\R^+$ is a $1$-face.
\end{proof}

   Recall that a convex polyhedral cone $C$ is \emph{regular} if
the cardinality of a frame of $C$  equals $\dim C$, and \emph{irregular} otherwise.

\begin{lemma}\label{lem:face} Let  $C$ be a regular convex polyhedral cone and let $A$ be its frame.
Then the convex polyhedral cone spanned by a subset of $A$ is a face of $C$.
\end{lemma}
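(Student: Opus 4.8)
The plan is to exploit regularity to linearize $C$ into the standard cone $(\R^+)^n$, where the claim is transparent, and then transport the face property back through the linear map. First I would use the hypothesis that $C$ is regular: if $\dim C = n$, then its frame $A=\{\ba_1,\dots,\ba_n\}$ has exactly $n$ members spanning the $n$-dimensional subspace $\text{span}(C)$, so $\ba_1,\dots,\ba_n$ are linearly independent. Working inside $\text{span}(C)$, the linear map $\varphi$ determined by $\varphi(\ba_i)=\be_i$ is then an isomorphism onto $\R^n$ satisfying $\varphi(C)=(\R^+)^n$, and for any subset $S\subseteq\{1,\dots,n\}$ it carries the subcone $\sum_{i\in S}\ba_i\R^+$ onto $F_S:=\sum_{i\in S}\be_i\R^+=\{\bx\in(\R^+)^n:\ x_j=0\ \text{for}\ j\notin S\}$.

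Next I would record that the face property is invariant under the invertible linear map $\varphi$: since $\varphi$ carries closed line segments to closed line segments, relative interior points to relative interior points, and endpoints to endpoints, a set $F$ is a face of $C$ if and only if $\varphi(F)$ is a face of $\varphi(C)$. Hence it suffices to prove that each $F_S$ is a face of $(\R^+)^n$.

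The core step is then a direct verification from the definition of a face. Take any closed segment $[\bu,\bv]\subset(\R^+)^n$ having a relative interior point $\bw=t\bu+(1-t)\bv$ with $0<t<1$ lying in $F_S$ (the degenerate case $\bu=\bv$ being trivial). For each $j\notin S$ we have $w_j=0$, while $u_j,v_j\geq 0$ because $\bu,\bv\in(\R^+)^n$; since $t>0$ and $1-t>0$, the equality $t u_j+(1-t)v_j=0$ forces $u_j=v_j=0$. Thus $\bu,\bv\in F_S$, i.e. both endpoints of the segment lie in $F_S$, which is exactly the defining condition for $F_S$ to be a face.

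I expect no serious obstacle here: the substance is the reduction to the standard cone $(\R^+)^n$, after which the argument is an elementary positivity argument. The only points demanding care are the bookkeeping around ``relative interior'' in the definition — confirming that a single relative interior point in $F_S$, rather than the whole open segment, already suffices — and checking that $\varphi$ genuinely preserves the face structure in both directions; both are straightforward once stated precisely.
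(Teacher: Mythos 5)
Your proof is correct and matches the paper's intent: the paper's entire proof is the one-liner ``This follows from the definition of face,'' and your argument---using regularity to get linear independence of the frame, changing coordinates so that $C$ becomes $(\R^+)^n$, and then running the elementary positivity argument on coordinates---is exactly the verification the paper leaves to the reader. The linearization step is just a clean way of organizing that same definitional check (note it is also where regularity is genuinely used: linear independence gives unique coefficient representations, without which the positivity argument, and indeed the lemma itself, would fail).
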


\begin{proof} This follows from the definition of face.
\end{proof}


\section{\textbf{Slices of irregular convex polyhedral  cones}}\label{sec:slice}

In this section, we  investigate the intersection of a convex polyhedral  cone $C$ and a $2$-dimensional hyperplane $H$,
 where $H$ parallel to a $2$-face of $C$.



\begin{defi}{\rm Let $C$ be a convex polyhedral  cone and $F$ be a $2$-face of $C$. We call $F$ a \emph{feasible} $2$-face of $C$,
if there exists a point $x_0\in C^\circ$ such that the intersection
\begin{equation}\label{eq:slice}
(\text{span~}(F)+x_0)\cap C
\end{equation}
is a convex set with at least two extreme points; in this case, we call the set in \eqref{eq:slice} a \emph{corner-cut slice} of $C$.
}
\end{defi}

\begin{figure}
  \includegraphics[width=0.32\textwidth]{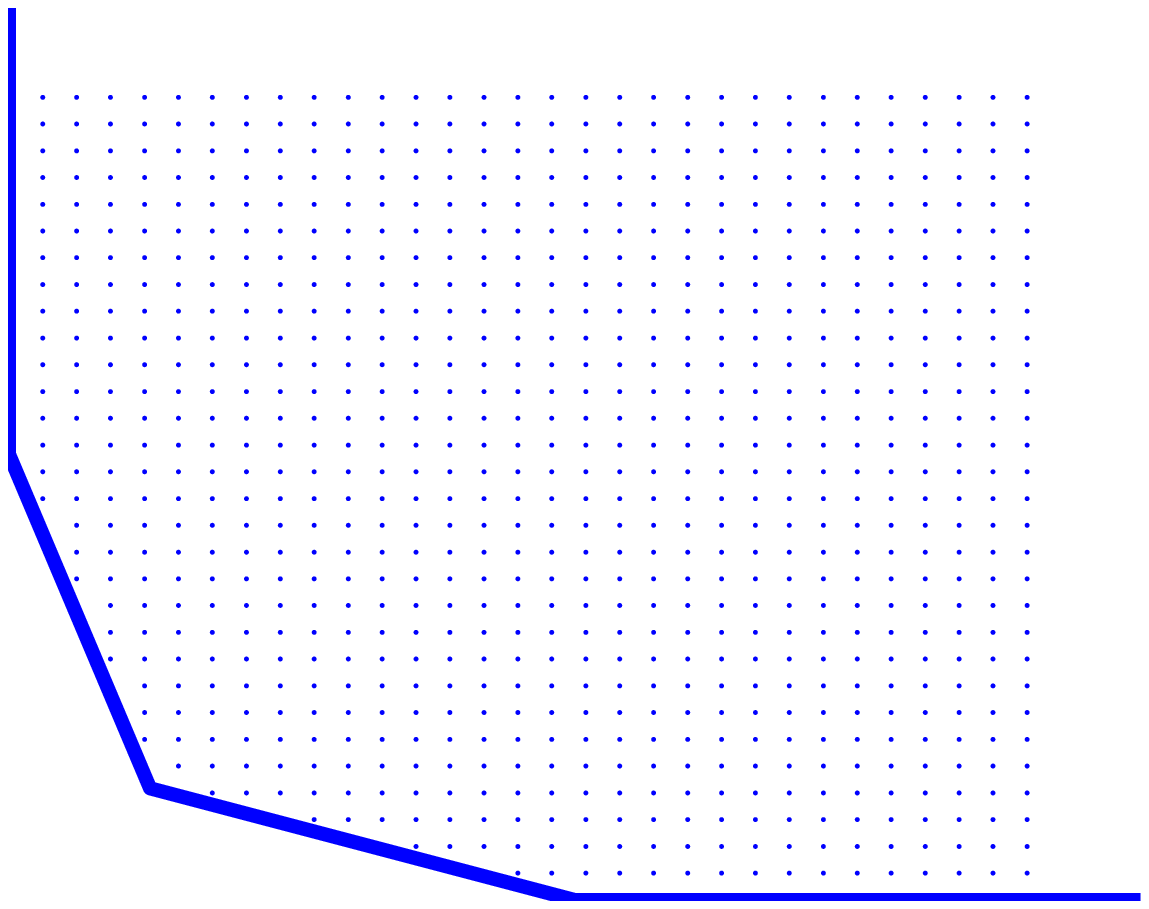}\quad
  \includegraphics[width=0.32\textwidth]{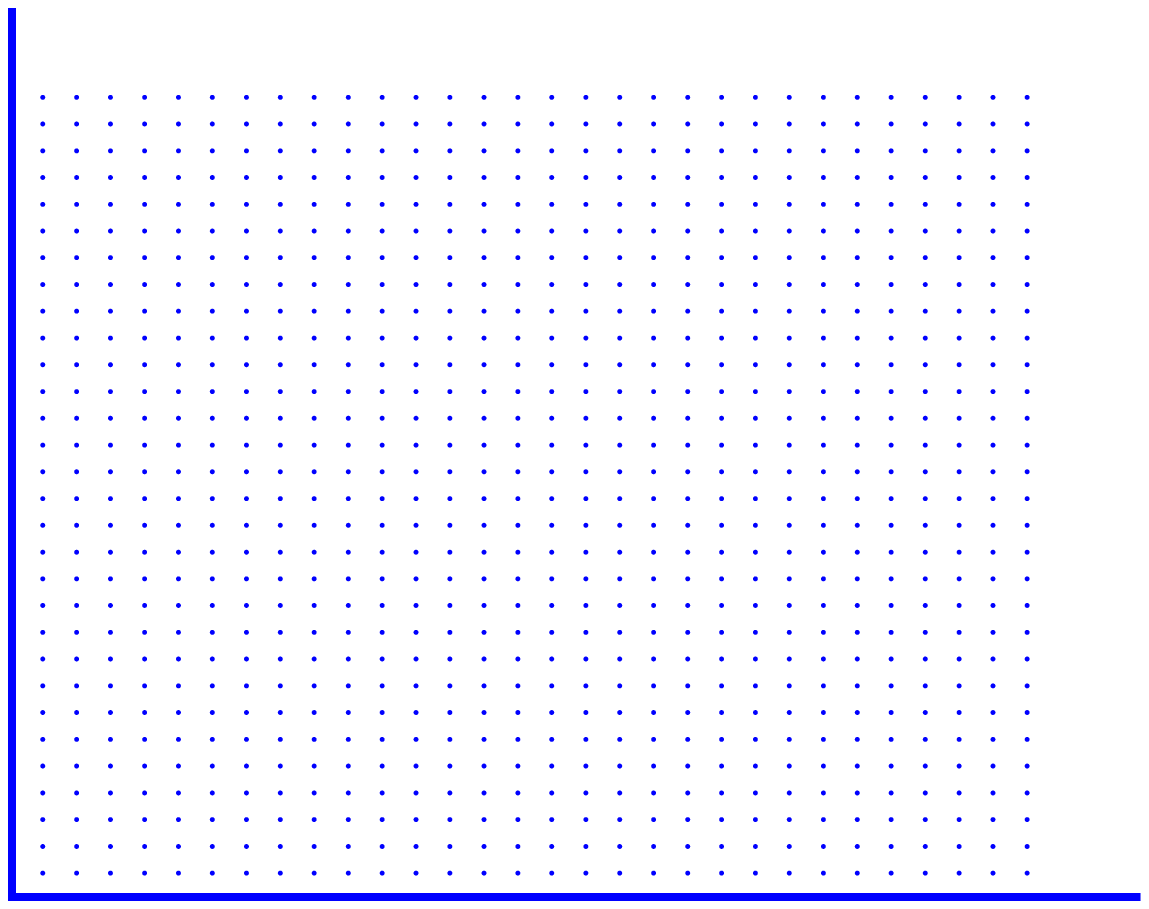}\\
  \caption{The left side   is a corner-cut slice, while the right side   is not.}\label{fig:corner}
\end{figure}

The following lemma is obvious, see Figure \ref{fig:corner}.

\begin{lemma}\label{lem:criterion}  Let $X$ be the set in \eqref{eq:slice}, then

(i) $X$ is a corner-cut slice if and only if there exists a line $L$ such that
$X\cap L$ is a line segment and $X^\circ \cap L=\emptyset$.

(ii) $X$ is a corner-cut slice if and only if there exists a ray $L$ emanating from an extreme point of $X$
such that $X\cap L$ is a line segment.
\end{lemma}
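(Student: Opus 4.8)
The plan is to reduce everything to one piece of planar geometry. Since $C$ is a polyhedral cone it is an intersection of finitely many closed half-spaces, and $\mathrm{span}(F)+x_0$ is a $2$-plane, so the slice $X=(\mathrm{span}(F)+x_0)\cap C$ is a (possibly unbounded) planar convex polygon; because $x_0\in C^\circ$ we have $x_0\in X^\circ$, so $X$ is genuinely $2$-dimensional. Its extreme points are exactly its vertices. Throughout I read ``line segment'' as a nondegenerate compact segment. I would then isolate the engine of both parts: a nonempty closed convex planar set has an extreme point iff it contains no line; and, when it is line-free, it admits the Minkowski--Weyl decomposition $X=\mathrm{conv}(\mathrm{ext}\,X)+\mathrm{rec}(X)$, whose boundary is a bi-infinite polygonal arc (a cycle if $X$ is bounded) whose at most two unbounded pieces are the recession rays, so that $X$ carries a bounded edge if and only if it has at least two vertices. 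I expect this last equivalence to be the main obstacle.

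For part (i), the backward direction is immediate: if a line $L$ meets $X$ in a segment $[p,q]$ but misses $X^\circ$, then $L$ is a supporting line, so $X$ lies in a closed half-plane $H$ with boundary $L$. To see $p$ is extreme, write $p=\tfrac12(u+v)$ with $u,v\in X\subseteq H$; since $\langle p,\nu\rangle$ equals the boundary value $c$ while $\langle u,\nu\rangle,\langle v,\nu\rangle\le c$, both $u,v$ must lie on $L$, hence in $L\cap X=[p,q]$, which is impossible for an endpoint unless $u=v=p$. Thus $p,q$ are two extreme points. For the forward direction I would invoke the structural fact: at least two extreme points forces $X$ to be line-free with a bounded edge $[p,q]$, and the line through that edge meets $X$ in exactly $[p,q]$ while avoiding $X^\circ$, giving the desired $L$.

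For part (ii) the forward direction reuses the bounded edge $[p,q]$ produced above: the ray $L$ emanating from the extreme point $p$ through $q$ lies on the supporting line of that edge, so $X\cap L=[p,q]$ is a segment. For the backward direction I would argue contrapositively. If $X$ has no extreme point there is no ray to emanate from, so suppose $X$ has exactly one extreme point $v$; then $X=v+\mathrm{rec}(X)$ is a translate of a pointed cone, and any ray $L=v+\bd\R^+$ satisfies $X\cap L=v+\big(\mathrm{rec}(X)\cap \bd\R^+\big)$, which is the whole unbounded ray when $\bd\in\mathrm{rec}(X)$ and is just $\{v\}$ otherwise. In neither case is it a nondegenerate bounded segment, so the existence of such a ray forces at least two extreme points.

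The one step needing genuine care is the equivalence ``$\ge 2$ vertices $\iff$ a bounded edge'' for a line-free planar polyhedron, which I would prove by splitting on the recession cone $K=\mathrm{rec}(X)$ (pointed, hence $\{\bzero\}$, a single ray, or a wedge): the two unbounded boundary rays emanate from vertices, a single vertex yields exactly $X=v+K$ with no bounded edge, while any second vertex must be joined to the first along the boundary by bounded edges. Once this is established, both parts of the lemma follow from the short arguments above.
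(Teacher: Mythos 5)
The paper offers no proof of this lemma at all: it is declared ``obvious'' with a pointer to Figure \ref{fig:corner}, so there is no argument of the authors' to compare yours against step by step; your proof is correct and supplies exactly the details the paper leaves to the picture. Concretely, you contribute three things. First, the observation that $X$ is a planar convex \emph{polyhedron} with nonempty interior (since $x_0\in C^\circ$), so extreme points are vertices and Minkowski--Weyl applies. Second, the supporting-line argument in (i): a line meeting $X$ but missing $X^\circ$ must support $X$, and the endpoints of the intersection segment are then forced to be extreme --- this is the clean way to extract ``at least two extreme points'' from the hypothesis. Third --- and this is the real content, which you correctly single out as the crux --- the structural equivalence for line-free planar polyhedra: at least two vertices if and only if some edge is bounded, proved by splitting on the recession cone and using that the boundary is a polygonal arc whose compact sub-arc between two vertices consists of bounded edges. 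Your case analysis in (ii) (exactly one extreme point gives $X=v+\mathrm{rec}(X)$, so every ray from $v$ meets $X$ in $\{v\}$ or in a full ray, never in a bounded nondegenerate segment) is precisely what the right-hand picture in Figure \ref{fig:corner} is meant to convey. One further point in your favor: reading ``line segment'' as bounded and nondegenerate is not pedantry but necessary --- with degenerate or unbounded segments allowed, both equivalences fail for a translated pointed cone --- and it matches how the paper itself distinguishes singleton, segment and ray in the proof of Lemma \ref{lem:judge}.
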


\subsection{Lemmas}
We start with several lemmas.

\begin{lemma}\label{Q-cap-H} Let $C$ be a convex polyhedral cone, and $H$
be a $2$-dimensional hyperplane intersecting the interior of $C$. Then
 $$ \partial (H\cap C)=\bigcup_Q (Q\cap H) $$
 where $Q$ runs over all the  $(n-1)$-faces of $C$.
\end{lemma}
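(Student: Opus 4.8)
The plan is to reduce the statement to the identity $\partial(H\cap C)=H\cap\partial C$, where $\partial(H\cap C)$ is understood as the boundary of $H\cap C$ \emph{relative to} the $2$-plane $H$ (the only sensible reading, since $H\cap C$ has empty interior in $\R^n$). Once this identity is established, Lemma \ref{lem:Ger}(i) gives $\partial C=\bigcup_Q Q$, with $Q$ ranging over the $(n-1)$-faces of $C$, and intersecting with $H$ yields
$$\partial(H\cap C)=H\cap\textstyle\bigcup_Q Q=\bigcup_Q (Q\cap H),$$
which is exactly the claim. Note that the cone structure of $C$ enters only through this last appeal to Lemma \ref{lem:Ger}; the identity $\partial(H\cap C)=H\cap\partial C$ is a general fact about a closed convex set $C$ with nonempty interior and an affine plane $H$ meeting $C^\circ$.

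To prove that identity I would show that the relative interior of the planar convex set $H\cap C$ coincides with $H\cap C^\circ$. The inclusion $H\cap C^\circ\subset\mathrm{relint}(H\cap C)$ is immediate: if $x\in H\cap C^\circ$, a small ball $B(x,\varepsilon)\subset C$ meets $H$ in a relative neighbourhood of $x$ contained in $H\cap C$. For the reverse inclusion I would fix, using the hypothesis, a point $p\in H\cap C^\circ$, and take any $x\in H\cap\partial C$. The line $\ell$ through $p$ and $x$ lies in $H$. By the line-segment principle for convex sets, every point $(1-t)p+tx$ with $t\in[0,1)$ lies in $C^\circ$. On the other hand, if some point $x'=(1-t)p+tx$ with $t>1$ belonged to $C$, then writing $x=\tfrac{t-1}{t}\,p+\tfrac1t\,x'$ exhibits $x$ as an interior point of the segment $[p,x']$ with $p\in C^\circ$ and $x'\in C$, forcing $x\in C^\circ$ and contradicting $x\in\partial C$; hence all such points lie outside $C$. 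Thus along $\ell\subset H$ the point $x$ is approached both by points of $H\cap C$ (the $t<1$ side) and by points outside $H\cap C$ (the $t>1$ side), so $x$ is a relative boundary point of $H\cap C$. This gives $H\cap\partial C\subset\partial(H\cap C)$, and combined with the first inclusion yields $\mathrm{relint}(H\cap C)=H\cap C^\circ$, whence $\partial(H\cap C)=(H\cap C)\setminus(H\cap C^\circ)=H\cap\partial C$.

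The only delicate point is the second paragraph: verifying that a boundary point $x$ of $C$ lying on $H$ is genuinely a relative boundary point of the slice, which requires that the line through $p$ and $x$ actually leaves $C$ past $x$. Because $C$ is a cone this ray might a priori stay inside $C$, so the line-segment argument ruling out $x'\in C$ for $t>1$ is precisely what is needed; everything else is bookkeeping with relative interiors.
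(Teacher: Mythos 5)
Your proof is correct, and at the top level it follows the same route as the paper: reduce the lemma to the identity $\partial(H\cap C)=H\cap\partial C$ (with the boundary of the slice taken relative to $H$) and then invoke Lemma \ref{lem:Ger}(i) to write $\partial C$ as the union of the $(n-1)$-faces. The difference lies in how the key inclusion $H\cap\partial C\subset\partial(H\cap C)$ is verified. The paper argues that at $x\in\partial C$ an open half-ball of $B_n(x,r)$ misses $C$ (a supporting-hyperplane fact) and then asserts that this forces an open half-ball of $B_2(x,r)$ to miss $H\cap C$; that ``hence'' silently uses the hypothesis $H\cap C^\circ\neq\emptyset$, since otherwise $H$ could lie inside the supporting hyperplane at $x$ and avoid the excluded half-ball entirely. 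Your line-segment argument through a fixed point $p\in H\cap C^\circ$ makes exactly this use of the hypothesis explicit: segments from $p$ toward $x$ stay in $C^\circ$, while points of the line beyond $x$ cannot lie in $C$, else $x$ would be interior. So your proof is, if anything, the more airtight of the two; what the paper's version buys is brevity, and what yours buys is that the one delicate point --- that the line through $x$ genuinely exits $C$, which is exactly where the cone structure might have caused trouble --- is addressed rather than left implicit.
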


\begin{proof} Recall that  $\partial C$ is the union of all $(n-1)$-faces, so we need only show that
\begin{equation}\label{H-C}
\partial (H\cap C)=H\cap \partial C.
\end{equation}
Let $x \in H\cap C$.
If $x\in \partial C$, then (at least) a half open ball of $B_n(x,r)$ does not belong to $C$, and hence
a half open ball of $B_2(x,r)$ does not belong to $H\cap C$, so $x\in \partial (H\cap C)$.
If $x\in C^\circ$, then clearly $x\in (H\cap C)^\circ$.
Hence \eqref{H-C} holds and the lemma follows.
\end{proof}

We use $[\ba, \bb]^+$ to denote the convex polyhedral cone spanned by $\{\ba,\bb\}$.

\begin{lemma}\label{lem:judge}  Let $C$ be a convex polyhedral cone,  $Q$ be an $(n-1)$-face of $C$,
 $[\ba,\bb]^+$ be a $2$-face of $C$, and  $y\in C^\circ$. Denote $\Sigma=\text{span}(\{\ba,\bb\})$.
 Then

  $(i)$ If $\{\ba,\bb\}\subset Q$, then $Q\cap (\Sigma+y)=\emptyset$.

  $(ii)$ If $\{\ba,\bb\}\cap Q=\{\ba\}$ or $\{\bb\}$,  then
$Q\cap (\Sigma+y)$ is either $\emptyset$, or a ray.

 $(iii)$ If $\{\ba, \bb\}\cap Q=\emptyset$, then $Q\cap (\Sigma+y)$ is $\emptyset$, or a singleton, or a line segment.

 $(iv)$ $X_y:=C\cap(\Sigma+y)$
   is a corner-cut slice if and only if there exists an $(n-1)$-face $Q$ of $C$ such that
 $Q\cap (\Sigma+y)$ is a line segment.
\end{lemma}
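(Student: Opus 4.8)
The plan is to identify each piece $Q\cap(\Sigma+y)$ with the intersection of $C$ and a line lying in the supporting hyperplane carrying $Q$, and then to read off its shape from whether that line has a recession direction inside $C$.

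First I set up notation. Since $Q$ is an $(n-1)$-face, $\Pi:=\text{span}(Q)$ is a supporting hyperplane; I fix a normal $\bu$ with $\langle x,\bu\rangle\ge 0$ for all $x\in C$ and $Q=C\cap\Pi=\{x\in C:\langle x,\bu\rangle=0\}$ (Lemma~\ref{lem:Ger}(ii)). Because $y\in C^\circ$ whereas $C\cap\Pi=Q\subset\partial C$, we must have $\langle y,\bu\rangle>0$, so in particular $y\notin\Pi$. As $\ba,\bb\in C$, we have $\ba\in Q\iff\langle\ba,\bu\rangle=0$ and similarly for $\bb$; writing $p=\langle\ba,\bu\rangle\ge0$ and $q=\langle\bb,\bu\rangle\ge0$, the hypotheses (i), (ii), (iii) become $p=q=0$, exactly one of $p,q$ zero, and $p,q>0$. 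Throughout I use the identity $Q\cap(\Sigma+y)=C\cap\Pi\cap(\Sigma+y)=C\cap\ell$ with $\ell:=\Pi\cap(\Sigma+y)$.

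For (i)--(iii): if $p=q=0$ then $\Sigma\subset\Pi$, so $\Sigma+y$ is a translate of $\Pi$ through the point $y\notin\Pi$ and is therefore disjoint from $\Pi$, forcing $\ell=\emptyset$; this gives (i). Otherwise $\Sigma\not\subset\Pi$, the restriction of $\langle\cdot,\bu\rangle$ to $\Sigma$ is a nonzero linear functional, and $\ell$ is an affine line whose direction spans $\Sigma\cap\Pi=\{w\in\Sigma:\langle w,\bu\rangle=0\}$. Then $C\cap\ell$ is a convex subset of a line, hence $\emptyset$, a point, a segment, a ray, or the whole line; the last is impossible because $C$ is pointed (its generators lie in an open half-space, so $C\cap(-C)=\{\bzero\}$). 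By Lemma~\ref{lem:easy1}(i) a ray contained in $C$ has its direction in $C$, and conversely if $w\in C$ then a nonempty $C\cap\ell$ contains a ray in direction $w$; so the shape is controlled by which of $\pm w$ lie in $C$. I compute the direction $w=\alpha\ba+\gamma\bb$ (coefficients unique since $\ba,\bb$ are independent) from $\alpha p+\gamma q=0$. In case (ii), say $p=0<q$, this forces $\gamma=0$, so $w\parallel\ba$; as $\ba\in C$ but $-\ba\notin C$, the set $C\cap\ell$ is unbounded on exactly one side, i.e. empty or a ray, proving (ii). In case (iii) the relation $\alpha p+\gamma q=0$ with $p,q>0$ forces $\alpha,\gamma$ to have opposite signs, and applying Lemma~\ref{lem:easy1}(ii) to the extreme directions $\ba\R^+$ and $\bb\R^+$ (which are $1$-faces of $C$ by Lemma~\ref{lem:Ger}(iv)) shows that neither $w$ nor $-w$ lies in $C$; hence $C\cap\ell$ has no recession direction and is bounded, i.e. $\emptyset$, a point, or a segment, proving (iii).

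For (iv), put $H=\Sigma+y$; by Lemma~\ref{Q-cap-H}, $\partial X_y=\bigcup_Q(Q\cap H)$, and I argue both implications through the criterion of Lemma~\ref{lem:criterion}(i). For the \emph{if} part, assume some $(n-1)$-face $Q_0$ has $Q_0\cap H=S$ a segment, and let $\Pi_{Q_0}=\text{span}(Q_0)$ with normal $\bu_0$ and $L\supset S$ the line carrying $S$. Since $S\subset\Pi_{Q_0}$ we get $L\subset\Pi_{Q_0}$, whence $X_y\cap L=C\cap L\subset C\cap\Pi_{Q_0}=Q_0$ and thus $X_y\cap L=Q_0\cap H=S$; moreover $S$ being nonempty means $\Sigma\not\subset\Pi_{Q_0}$, so $\langle\cdot,\bu_0\rangle$ is nonconstant on $H$ and every point of $X_y^\circ$ satisfies $\langle\cdot,\bu_0\rangle>0$, giving $X_y^\circ\cap L=\emptyset$; Lemma~\ref{lem:criterion}(i) then makes $X_y$ a corner-cut slice. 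For the \emph{only if} part, Lemma~\ref{lem:criterion}(i) supplies a line $L$ with $X_y\cap L$ a segment $S'$ lying in $\partial X_y=\bigcup_Q(Q\cap H)$. Each one-dimensional piece $Q\cap H$ lies on the line $\ell_Q=\Pi_Q\cap H$, so any piece with $\ell_Q\ne L$ meets the segment $S'$ in at most one point; since $C$ has only finitely many faces and $S'$ is uncountable, some face $Q_0$ must satisfy $\ell_{Q_0}=L$, and for it $Q_0\cap H=C\cap\ell_{Q_0}=C\cap L=X_y\cap L=S'$ is a segment.

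The routine part is the linear-algebra bookkeeping in the setup. The real content, and the step I expect to be the main obstacle, is twofold: pinning down the direction $w$ of $\ell$ and deciding membership of $\pm w$ in $C$ using only Lemma~\ref{lem:easy1}, which is exactly what distinguishes a ray (one extreme direction of the $2$-face lies in $Q$) from a bounded segment (neither does); and, in (iv), the covering argument that a nondegenerate boundary segment, being contained in the union of the finitely many one-dimensional pieces $Q\cap H$, must coincide with a single such piece $C\cap L$. I expect this last point to be the most delicate, since a priori the segment could be assembled from several rays together with finitely many stray points.
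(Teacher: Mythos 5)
Your proof is correct, and its skeleton matches the paper's: identify $Q\cap(\Sigma+y)$ with $C\cap\ell$ for a line $\ell$, use Lemma \ref{lem:easy1} to decide which recession directions are available (one generator in $Q$ gives a ray, neither gives boundedness), and prove (iv) by combining Lemmas \ref{lem:criterion} and \ref{Q-cap-H} with the finite-cover argument that pieces lying on lines other than $L$ meet the segment in at most one point. The genuine difference is organizational: you route everything through the supporting-hyperplane functional $\langle\cdot,\bu\rangle$ of $\Pi=\text{span}(Q)$, which replaces two of the paper's ad hoc steps. In (iii), the paper proves the sign condition (direction $\ba-c\bb$ with $c>0$) face-theoretically: if $c<0$ then $\ba-c\bb\in C\cap\text{span}(Q)=Q$, forcing $\ba,-c\bb\in Q$; you get it from $\alpha p+\gamma q=0$ with $p,q>0$. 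In (iv), the paper shows the relevant segment lies in $\partial X_y$ via a planar-convexity argument and only extracts a sub-interval, whereas you obtain the exact identity $X_y\cap L=Q_0\cap(\Sigma+y)$ from $L\subset\Pi_{Q_0}$, and get $X_y^\circ\cap L=\emptyset$ from strict positivity of the functional on relative interior points; this is arguably cleaner. The one step you must still justify: that $C$ lies on one side of $\Pi$ is \emph{not} contained in Lemma \ref{lem:Ger}(ii), which only gives $Q=C\cap\Pi$. It does follow in one line from the definition of a face: if $z_1,z_2\in C$ lay strictly on opposite sides of $\Pi$, the segment $\overline{[z_1,z_2]}$ would cross $\Pi$ at a relative interior point belonging to $C\cap\Pi=Q$, and the face property would force $z_1,z_2\in Q\subset\Pi$, a contradiction. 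With that line added your argument is complete. (A cosmetic slip in (i): $\Sigma+y$ is not itself a translate of $\Pi$; it is contained in the translate $\Pi+y$, which is disjoint from $\Pi$, so the conclusion stands.)
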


\begin{proof} First, by linear algebra, $Q\cap (\Sigma+y)$ is a (connected) subset of a line since $y\in C^\circ$.

(i) The first assertion holds since $(\Sigma+y)\cap \text{span}(Q)=\emptyset$.

(ii) Suppose $\ba\in Q$ and $x\in Q\cap (\Sigma+y)$. Then clearly $x+\R^+\ba$ also belongs to this intersection.
The second assertion is proved.

(iii) To prove the third assertion, we need only show that
$$
I=Q\cap (\Sigma+y)
$$
is not  a ray.
Let $L'$ be the intersection of $\Sigma$ and $\text{span}(Q)$, then $L'$ is a subspace of dimension $1$ or $2$.
 Since $\ba,\bb\not\in Q$, we have $\dim L'=1$.
  Moveover,  we have $L'=\R(\ba-c \bb)$
for some $c\in \R\setminus \{0\}$.

We claim that $c>0$, or in other words, $L'$ locates outside of the cone $C$ (except the origin).
Suppose on the contrary $c<0$, then
$$\ba-c\bb\in C\cap \text{span}(Q)=Q,$$
where the last equality is due to Lemma \ref{lem:Ger}(ii).
It follows that  $\ba, -c\bb\in Q$ since $Q$ is a face, a contradiction.
Our claim is proved.

Assume on the contrary that $I$ is a ray, and let $L$ be the line containing $I$,
then the direction of $L$ is $\pm(\ba-c\bb)$.
  By Lemma \ref{lem:easy1}(ii),  $\pm(\ba-c\bb)\not\in C$; furthermore,  by Lemma \ref{lem:easy1}(i),
   the intersection  $L\cap C$ cannot be a ray.
   It follows that the interval $I$, as a subset of $L\cap C$ is not a ray.  This  contradiction proves (iii).

 (iv) 
Suppose $X_y$ is a corner-cut slice, then  there is a line $L$ such that
$X_y\cap L$ is a line segment and $X_y^\circ \cap L=\emptyset$ (Lemma \ref{lem:criterion}).
Clearly,
$$
(X_y\cap L)\subset \bigcup_Q Q\cap (\Sigma+y).
$$
Let $Q$ be an $(n-1)$-face such that $Q\cap (\Sigma+y)$ contains a sub-interval of $X_y\cap L$.
By Lemma \ref{lem:judge}, $Q\cap (\Sigma+y)$ is a subset of a line, hence
$Q\cap (\Sigma+y)$ is a subset of $L$, and thus a subset of
$X_y\cap L,$
 and finally must be a line segment.

On the other hand, if $I=Q\cap (\Sigma+y)$ is a line segment, let $L$ be the line containing this segment,
 then, by (iii), we have $\ba, \bb\not\in Q$, and
 the direction of $L$ is of the form $\ba-c\bb$ for some $c>0$.
Hence $L\cap X_y$  must be a line segment since  it cannot be a ray; moreover, since the sub-interval $I$
of $L\cap X_y$ is a subset of $\partial X_y$, $L\cap X_y$ itself must be contained in $\partial X_y$, since
$X_y$ is a planar convex set. Therefore $X_y$ is a corner-cut slice.
 The lemma is proved.
\end{proof}

\subsection{Existence of corner-cut slices}

\begin{defi}{\rm
We say a convex polyhedral cone $C$ has \emph{regular boundary}, if all its $(n-1)$-faces are regular cones. }
\end{defi}

Let $x,y\in \R^n$, we will use $\overline{[x,y]}$ to denote the line segment with endpoints $x$ and $y$.

\begin{theorem}\label{thm:slice} An irregular convex polyhedral cone with regular boundary always
    has a feasible $2$-face.
\end{theorem}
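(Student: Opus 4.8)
The plan is to show that an irregular cone $C$ (with regular boundary) must contain a $2$-face whose associated slice is corner-cut. Since $C$ is irregular, its frame $A$ has cardinality $m > n = \dim C$. The regularity of the boundary means every $(n-1)$-face is a regular cone, i.e.\ each $(n-1)$-face is spanned by exactly $n-1$ of the frame vectors. The central tension is this: locally, along the boundary, everything looks regular, yet globally there are ``too many'' extreme directions. I would try to locate a $2$-face $[\ba,\bb]^+$ together with an $(n-1)$-face $Q$ such that exactly one of $\ba,\bb$ lies in $Q$ fails, and instead $\{\ba,\bb\}\cap Q=\emptyset$ while $Q\cap(\Sigma+y)$ is a genuine segment; by Lemma \ref{lem:judge}(iv) this is precisely what makes the slice corner-cut. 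So the real task is to find a $2$-face and an $(n-1)$-face in ``general position'' relative to each other.

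First I would set up a counting or combinatorial argument on the face lattice. For a regular cone, the frame has exactly $n$ vectors, each $k$-face is spanned by exactly $k$ of them, and the combinatorial structure is that of the faces of the positive orthant $(\R^+)^n$ (a simplicial/Boolean lattice). The hypothesis of regular boundary forces each $(n-1)$-face $Q_i$ to be regular, hence spanned by exactly $n-1$ frame vectors; write $Q_i=[\,A\setminus\{\ba_{j_i}\}\,]^+$ where I think of $\ba_{j_i}$ as the frame vector ``opposite'' to $Q_i$. Since $|A|=m>n$, there are at least $m$ choices of frame vectors but the $(n-1)$-faces can omit only one vector each, so the map sending $Q_i$ to its omitted vector cannot be a bijection onto $A$; some frame vector is omitted by several facets, or some subset of frame vectors behaves non-simplicially. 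I would exploit this excess to produce a $2$-face $[\ba,\bb]^+$ and a facet $Q$ with $\ba,\bb\notin Q$ but with $Q$ still ``cutting across'' the $2$-plane $\Sigma=\span\{\ba,\bb\}$, which by Lemma \ref{lem:judge}(iii) yields a segment rather than a ray or singleton.

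The key geometric step is to guarantee that the segment $Q\cap(\Sigma+y)$ is actually nonempty and nondegenerate for a suitable interior point $y$. Here I would use that $\Sigma+y$ is a translate of a $2$-plane through $C^\circ$, so the slice $X_y=C\cap(\Sigma+y)$ is a $2$-dimensional bounded or unbounded convex set whose boundary, by Lemma \ref{Q-cap-H}, decomposes as $\bigcup_Q (Q\cap H)$. By Lemma \ref{lem:judge}, each contributing facet gives either a ray (when it shares one of $\ba,\bb$) or a segment (when it shares neither). If \emph{every} boundary piece were a ray, then $X_y$ would have at most the two extreme directions $\ba,\bb$ and hence be a translate of $[\ba,\bb]^+$ with a single extreme point (i.e.\ not corner-cut). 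I would argue that this forces exactly $n$ frame vectors and simplicial structure near $\Sigma$, contradicting irregularity: the extra frame vectors must ``interfere'' with the plane $\Sigma+y$, producing a facet $Q$ disjoint from $\{\ba,\bb\}$ whose intersection with $\Sigma+y$ is the desired segment.

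The main obstacle I anticipate is passing from the purely combinatorial ``too many extreme directions'' statement to the geometric conclusion that a $2$-face $\Sigma$ and a transversal facet $Q$ coexist, together with a certificate point $y\in C^\circ$ making the slice genuinely $2$-dimensional. Choosing $\Sigma$ carelessly could give a $2$-plane meeting only facets that contain $\ba$ or $\bb$, yielding only rays; one must select the $2$-face adjacent to the ``extra'' extreme direction so that moving the plane into the interior exposes a cutting facet. I expect to need a careful argument that among all $2$-faces, at least one admits a facet $Q$ with $\{\ba,\bb\}\cap Q=\emptyset$ and $\dim(\Sigma\cap\span Q)=1$ with the slope condition $c>0$ from Lemma \ref{lem:judge}(iii) automatically satisfied, after which Lemma \ref{lem:judge}(iv) finishes. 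Controlling this transversality, and ruling out the degenerate case where all slices through interior points remain single-cornered (which would force regularity), is where the bulk of the work will lie.
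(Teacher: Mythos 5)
Your target configuration---a $2$-face $F=[\ba,\bb]^+$ of $C$ together with an $(n-1)$-face $Q$ satisfying $\{\ba,\bb\}\cap Q=\emptyset$ and $Q\cap(\text{span}(F)+y)$ a genuine segment, finished off by Lemma \ref{lem:judge}(iv)---is exactly the mechanism the paper uses, but only in one of its two cases, and your route to that configuration has both an error and a missing idea. The error: you write each facet as $Q_i=[A\setminus\{\ba_{j_i}\}]^+$, i.e.\ as omitting exactly one frame vector. That is inconsistent with your own (correct) observation that regular boundary forces each facet to be spanned by exactly $n-1$ frame vectors: when $|A|=m>n$, a facet then omits $m-(n-1)\geq 2$ frame vectors, so the ``facet $\mapsto$ omitted vector'' map does not exist and the counting argument built on it collapses. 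What survives is much simpler and is what the paper actually uses: \emph{every} facet $Q$ misses at least two frame vectors $\bb_1,\bb_2$.

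The genuine gap comes next: having found $\bb_1,\bb_2\notin Q$, nothing guarantees that $[\bb_1,\bb_2]^+$ is a $2$-face of $C$, and your construction cannot start without an actual $2$-face whose two generators both avoid some facet. The paper splits precisely on this point. In its Case 2 (every pair of frame vectors spans a $2$-face), $[\bb_1,\bb_2]^+$ is a $2$-face, and the transversality you defer is handled concretely: pick $x_0$ in the relative interior of $Q$ and set $y=x_0+c\bb_1$ with $c>0$ small; then $(\text{span}(\{\bb_1,\bb_2\})+y)\cap Q$ contains a segment, is a segment by Lemma \ref{lem:judge}(iii), and Lemma \ref{lem:judge}(iv) concludes. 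In its Case 1 (some pair $\bb_1,\bb_2$ does \emph{not} span a $2$-face), your scheme has nothing to run on; the paper instead exploits the failure directly: regular boundary forces $[\bb_1,\bb_2]^+$ to meet $C^\circ$ (otherwise it would lie in a regular facet and, by Lemma \ref{lem:Ger}, be a face of $C$), one takes a $2$-face $F=[\bb_1,\ba_1]^+$ through $\bb_1$, and corner-cutness of $(\text{span}(F)+x_0)\cap C$ is certified by a ray emanating from the extreme point $\rho\bb_2$ via Lemma \ref{lem:criterion}(ii)---no facet $Q$ is ever identified. Both cases are nonvacuous (the cone over a square in $\R^3$ with a diagonal pair is Case 1; the paper's Example 3.1 in $\R^6$ is Case 2), so neither can be dropped, and your fallback claim that ``all slices single-cornered forces regularity''---which you yourself flag as the bulk of the work---is never substantiated. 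As it stands the proposal is an outline whose two load-bearing steps (the combinatorial selection and the dichotomy) are respectively broken and absent.
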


\begin{proof} We divide the proof into two cases.

\textit{Case 1.
There exists  $\bb_1,\bb_2$ in the frame of $C$ such that $[\boldsymbol{b}_1, \bb_2]^+$ is not a $2$-face of $C$.}

 Denote $G=[\bb_1, \bb_2]^+$. We claim that $G\cap C^\circ \neq \emptyset$.
Suppose on the contrary  $G\cap C^\circ = \emptyset$, then $G$ is a subset of $\partial C$, so
by Lemma \ref{lem:Ger}(i), there
exists an $(n-1)$-face $Q$ of $C$ containing $G$ .
Since $Q$ is a regular cone,
$G$ must be a face of $Q$. By Lemma \ref{lem:Ger} (iv),  $G$ is also a face of $C$, which is a contradiction.
Our claim is proved.

 By Lemma \ref{lem:Ger}(i), there exists  a $(n-1)$-face $Q$ of $C$ containing $\bb_1$.
 Using this argument repeatedly, there exists a
  $2$-face $F$ of $Q$ containing $\bb_1$.
Let $\ba_1$ be the other  element in the frame of $F$. Clearly $\ba_1$ is not a multiple of $\bb_2$,
since $F=[\bb_1, \ba_1]^+$ is a $2$-face and $[\bb_1,\bb_2]^+$ is not.

Pick any  $x_0\in [\bb_1, \bb_2]^+\cap C^\circ$.  Then $x_0$ can be written as
  $x_0=\lambda \bb_1+\rho \bb_2$,  where $\lambda,\rho>0$.
Denote $\Sigma=\text{span}(\{\ba_1, \bb_1\}).$
 We will show that
\begin{equation}X=(\Sigma+x_0)\cap C\end{equation}
is a corner-cut slice.

Clearly $\rho\bb_2$ is a extreme point
 of $X$, since it is an extreme direction of $C$. 

 Choose $\delta>0$ small so that
 $z=x_0-\delta \ba_1$ is still an interior point of $C$.
 Clearly, both $\rho \bb_2$ and $z$ belong to $X$, so $\overline {[\rho\bb_2, z]}$ also
 belongs to $X$. Let
 $$\bc=z-\rho \bb_2= \lambda\bb_1-\delta \ba_1.$$
By Lemma \ref{lem:easy1}, $\bc\not\in C$ and  the intersection  $(\rho\bb_2+\bc\R^+)\cap C$ is
not ray.
  Therefore,  the intersection of the ray $\rho\bb_2+\bc\R^+$  and  $X$ is a line segment, and by Lemma \ref{lem:criterion} (ii), $X$
 is a corner-cut slice. The theorem is proved in this case.

\textit{Case 2. For every pair  $\bb_1,\bb_2$ in the frame of $C$,  $[\boldsymbol{b}_1, \bb_2]^+$ is  a $2$-face of $C$.}

Pick  any $(n-1)$-face $Q$ of $C$, then the frame of $Q$ has cardinality $n-1$.
Since $C$ is irregular,  we can find two elements $\bb_1$ and $\bb_2$ in the frame of $C$, but
not in the frame of  $Q$.
Let $F=[\bb_1, \bb_2]^+$,  then  $F$ is  a $2$-face of $C$ by our assumption.
Denote $\Sigma=\text{span}(F)$ and let $L$ be the intersection of $\Sigma$ and $\text{span}(Q)$, then
  $L$ is a one-dimensional subspace   since $\bb_1,\bb_2\not\in Q$,

 Pick  $x_0\in Q^\circ$. Then $y=x_0+c\bb_1\in C^\circ$ for small $c>0$.
Notice that
 $$(\Sigma+y)\cap Q=(\Sigma+x_0)\cap Q\supset(\Sigma+x_0)\cap (Q+x_0).$$
Since $L+x_0\subset \Sigma+x_0$  and $Q+x_0$   contain a small neighborhood of $L+x_0$ near $x_0$,
we deduce that $(\Sigma+y)\cap Q$   contains a line segment;
 moreover, by Lemma \ref{lem:judge}(iii), this  intersection  is a line segment since $\bb_1,\bb_2\not\in Q$.
 Finally, by Lemma \ref{lem:judge} (iv),
 $(\Sigma+y)\cap C$ is a corner-cut slice. The theorem is proved.
\end{proof}

The following example shows that Case 2 in the above proof does appear.

\begin{example}{\rm Let $\be_1,\dots, \be_6$ be the canonical basis of $\R^6$.
Let ${\boldsymbol \xi}=(1,1,1,-1,-1,-1)$. Let $C$ be the convex polyhedral cone with the frame
$\{\be_1,\dots,\be_6,{\boldsymbol \xi}\}$. Then any cone spanned by two vectors in this frame is a $2$-face.
}
\end{example}


\section{\textbf{Continuity of corner-cut slices}}\label{sec:metric}
Let $C$ be an irregular  convex polyhedral cone with regular boundary.
Now we fix   a feasible $2$-face of $C$ and denote it by $F$(The existence of such a face is guaranteed by Theorem \ref{thm:slice}).
We will use the notation $X_y$ for
the slice $C\cap (\text{span}(F)+y)$ for simplicity.
The following lemma is a strengthen of Theorem \ref{thm:slice}.

\begin{lemma}\label{cor:inner} Let $C$ be an irregular convex polyhedral cone with regular boundary, and  $F$ be a feasible $2$-face of $C$.
 Then there exists a  ball   $B(x_0,r)\subset C^\circ$ such that
for any $y$ in the ball, $X_y$ is a corner-cut slice.
\end{lemma}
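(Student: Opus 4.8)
The plan is to upgrade the single-point statement of Theorem \ref{thm:slice}, which produces one interior point $x_0$ whose slice $X_{x_0}$ is a corner-cut slice, into an open condition. The natural approach is to show that ``being a corner-cut slice'' is stable under small perturbations of the base point $y$, so that the set of good $y$ contains a ball. By Lemma \ref{lem:judge}(iv), $X_y$ is a corner-cut slice if and only if there exists an $(n-1)$-face $Q$ of $C$ such that $Q\cap(\text{span}(F)+y)$ is a line segment. Thus it suffices to find one such $Q$ and show that the ``line segment'' conclusion persists for all $y$ in a neighborhood of the point $x_0$ guaranteed by Theorem \ref{thm:slice}.

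First I would apply Theorem \ref{thm:slice} to fix the feasible $2$-face $F$ and an interior point, and then use Lemma \ref{lem:judge}(iv) to extract an $(n-1)$-face $Q$ for which $I=Q\cap(\text{span}(F)+x_0)$ is a genuine line segment. By Lemma \ref{lem:judge}(iii) this forces the frame elements $\ba,\bb$ spanning $F$ to lie outside $Q$, and the direction of the line containing $I$ is of the form $\ba-c\bb$ with $c>0$. Since $I$ is a nondegenerate segment, it has an interior point $w$ lying in the relative interior of the face $Q$; the key geometric fact is that $Q^\circ$ (relative interior within $\text{span}(Q)$) is open in $\text{span}(Q)$, and the map $y\mapsto Q\cap(\text{span}(F)+y)$ varies continuously because the intersection $L'=\text{span}(F)\cap\text{span}(Q)$ is a fixed one-dimensional subspace (independent of $y$). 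So for $y$ near $x_0$ the affine line $\text{span}(F)+y$ still meets $\text{span}(Q)$ in a line parallel to $\ba-c\bb$, and that line still passes through a point close to $w$, hence through $Q^\circ$.

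The next step is to promote ``meets $Q^\circ$'' back to ``intersection with $Q$ is a segment.'' Here I would argue as in the proof of Lemma \ref{lem:judge}(iv): because $Q$ is an $(n-1)$-face and the line direction $\ba-c\bb$ points outside $C$ (by Lemma \ref{lem:easy1}(ii), since $c>0$), the intersection $(\text{span}(F)+y)\cap C$ cannot contain a full ray in that direction, so $L\cap C$ is bounded in that direction and $Q\cap(\text{span}(F)+y)$ is a bounded connected subset of a line containing an interior point of $Q$ — that is, a nondegenerate segment rather than a singleton or empty set. Applying Lemma \ref{lem:judge}(iv) once more then yields that $X_y$ is a corner-cut slice for all such $y$. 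Finally I would ensure the neighborhood lies inside $C^\circ$ by shrinking it, which is possible because $x_0\in C^\circ$ and $C^\circ$ is open; intersecting the two neighborhoods produces the desired ball $B(x_0,r)\subset C^\circ$.

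The main obstacle I anticipate is the continuity/transversality bookkeeping in the middle step: one must verify that as $y$ moves, the one-dimensional intersection line $L'=\text{span}(F)\cap\text{span}(Q)$ translates rigidly (its direction is fixed at $\ba-c\bb$) and that the translate still crosses the relatively open set $Q^\circ\subset\text{span}(Q)$. The cleanest way to handle this is to write everything in terms of the fixed subspaces and use that the point $w\in Q^\circ$ chosen as an interior point of the segment $I$ has a neighborhood inside $Q^\circ$; then small translations of the crossing line stay within that neighborhood. Care is needed because $Q$ itself is only $(n-1)$-dimensional inside $\R^n$, so ``interior'' must be understood relative to $\text{span}(Q)$ throughout, and one must keep track of which intersections are taken in $\R^n$ versus within $\text{span}(Q)$.
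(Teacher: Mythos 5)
Your strategy---upgrading the characterization in Lemma \ref{lem:judge}(iv) to an open condition by a continuity/transversality argument---is genuinely different from the paper's, but it has a gap at exactly the step you label the ``key geometric fact'': the assertion that the segment $I=Q\cap(\text{span}(F)+x_0)$ has an interior point $w$ lying in the relative interior $Q^\circ$. Nothing in Lemma \ref{lem:judge} gives this, and it is false for lines meeting cones in general: a line inside $\text{span}(Q)$ can meet the $(n-1)$-dimensional cone $Q$ in a nondegenerate segment contained entirely in the relative boundary of $Q$ (for example, the line $\{(t,1-t,0):t\in\R\}$ meets $(\R^+)^3$ in a segment lying in its $2$-face $(\R^+)^2\times\{0\}$). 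In such a tangential configuration the segment actually sits in a face $G$ of $C$ of dimension at most $n-2$, and your perturbation step collapses: since the direction $\ba-c\bb$ of $I$ lies in both $\text{span}(F)$ and $\text{span}(G)$, one has $\dim\bigl(\text{span}(F)+\text{span}(G)\bigr)\le 2+(n-2)-1=n-1<n$, so for a full-measure set of arbitrarily small perturbations $y-x_0$ the plane $\text{span}(F)+y$ misses $\text{span}(G)$ entirely, and for perturbations to the ``outer'' side of the tangency the intersection $Q\cap(\text{span}(F)+y)$ can shrink to a point or become empty near $I$. So your method of certifying that the intersection stays a nondegenerate segment fails there. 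You would need either to prove this degenerate configuration cannot occur (not obvious, and not deducible from the \emph{statement} of Theorem \ref{thm:slice}, since Lemma \ref{cor:inner} hands you an arbitrary feasible $2$-face $F$ and an arbitrary witness point $x_0$), or to treat it by a separate argument; as written, the proof is incomplete. Note also that you are attempting two-sided stability (a ball centered at $x_0$ itself), which is more than the lemma requires: the ball in the conclusion need not be centered at the witness point.

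The paper sidesteps all of this with a one-sided perturbation. Starting from a witness $x_1$, it takes $u$ to be the intersection point of the two lines carrying the boundary rays of $X_{x_1}$ (these rays are non-parallel, and $u\notin C$ precisely because the slice is corner-cut), and chooses $r_1$ with $B(u,r_1)\cap C=\emptyset$ and $B(x_1,r_1)\subset C^\circ$. For $y\in B(x_1,r_1)\cap(x_1+C)$ one has $y-x_1\in C$, hence $C+(y-x_1)\subset C$ and therefore $X_{x_1}+(y-x_1)\subset X_y$. If $X_y$ had only one extreme point, it would be a translated planar cone containing the two translated boundary rays of $X_{x_1}$, hence containing the translated apex $u+(y-x_1)$; but that point lies in $B(u,r_1)$, contradicting $B(u,r_1)\cap C=\emptyset$. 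Any ball inside the section $B(x_1,r_1)\cap(x_1+C)$ then satisfies the lemma. This monotonicity-plus-forbidden-apex argument needs no relative-interior or transversality bookkeeping whatsoever; the simplest repair of your write-up is to restrict your perturbations to the one-sided set $x_1+C$, where the corner-cut property persists for this reason rather than by continuity of face intersections.
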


\begin{proof} Let $x_1$ be a point in $C^\circ$ such that $X_{x_1}$ is a corner-cut slice.
Let $u$ be the intersection of the two rays in the boundary of $X_{x_1}$;
clearly, $u\not\in C$. Choose $r_1>0$ small so that $B(u,r_1)\cap C=\emptyset$ and $B(x_1, r_1)\subset C^\circ$.

We claim that for any  $y$ belongs to  $B(x_1,r_1)\cap (C+x_1)$, a section of $B(x_1,r_1)$,
 $X_y$ is a corner-cut slice.
Since $y-x_1\in C$, we have
\begin{equation}\label{eq:move}
X_{x_1}+(y-x_1)=(\text{span~}(F)+y)\cap (C+(y-x_1))\subset X_y.
\end{equation}
Suppose on the contrary that $X_y$ has only one extreme point, then
the relation \eqref{eq:move} implies that $u+(y-x_1)\in X_y\subset C$, which contradicts $B(u,r_1)\cap C=\emptyset$. So our claim is proved.
Therefore, any  ball $B(x_0, r)\subset B(x_1,r_1)\cap (C+x_1)$ meets the requirement of the lemma.
\end{proof}

By applying a linear transformation, we may assume that $\ba$ and $\bb$, the generators of $F$, are orthogonal.

  For two sets $A, B\subset \R^n$, let $d_H(A,B)$ be the Hausdorff metric between $A$ and $B$.

  We define a mapping
  $$\pi_\ba:~ C^\circ\to \partial C$$
   as follows: For $x\in C^\circ$, by Lemma \ref{lem:easy1} (i),
  the ray $x-\R^+\ba$ intersects $\partial C$ at a single point, and we denote this point by $\pi_\ba(x)$.
Similarly, we define $\pi_\bb: ~ C^\circ\to \partial C$.

We note that, if $A$ is a subset of $C^\circ$,
$Q$ is an $(n-1)$-face of $C$, and
$\pi_\ba(A)\subset Q$, then $\pi_\ba|_A$ is the projection to $Q$ along the direction $-\ba$.
Therefore, $\pi_\ba$ is the pasting of several projections.

 \begin{thm}\label{thm:metric-one} Let $C$ be an irregular convex polyhedral cone with regular boundary, and let $F$ be a  feasible $2$-face of $C$.
 Then there exists a   ball $B^*\subset C^\circ$, such that
 for any $y\in B^*$, $X_y$ is a corner-cut slice, and
 $$  d_H(X_{y}, F),\     |a_1(y)|,\
  |b_1(y)| $$
are uniformly bounded, where $a_1(y)$ and $b_1(y)$ are the  origins of  the rays on $\partial X_y$ with direction $\ba$ and $\bb$ respectively.
 \end{thm}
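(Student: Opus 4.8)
The plan is to realize all three quantities as bounded functions of $y$ over a compact ball, by reducing them to maxima of finitely many \emph{affine} functions of $y$ and invoking compactness.

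First I would fix $B^*$ to be a closed ball contained in the open ball furnished by Lemma \ref{cor:inner} and in $C^\circ$, so that every $X_y$ with $y\in B^*$ is a corner-cut slice. The recession cone of $X_y=C\cap(\Sigma+y)$ is $C\cap\Sigma$ (intersection of recession cones, as $\mathrm{rec}(C)=C$). Since $F\subset\partial C$ lies in some $(n-1)$-face $Q$ by Lemma \ref{lem:Ger}(i), and $Q$ is regular by the regular-boundary hypothesis, one gets $C\cap\Sigma=Q\cap\Sigma=F$ through Lemma \ref{lem:Ger}(ii); thus $\mathrm{rec}(X_y)=F=\ba\R^++\bb\R^+$. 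Consequently $X_y$ is a line-free planar polyhedron with exactly two unbounded edges, of directions $\ba$ and $\bb$, so $a_1(y)$ and $b_1(y)$ are well defined as their origins, and $X_y=\mathrm{conv}(\mathrm{ext}(X_y))+F$.

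The heart of the argument is the affine reduction. Write $C=\bigcap_Q\{x:\langle n_Q,x\rangle\ge 0\}$, the intersection over all $(n-1)$-faces $Q$, with $n_Q$ an inner normal of $\mathrm{span}(Q)$, and parametrize $\Sigma+y$ by $x=y+s\ba+t\bb$. Each facet constraint becomes $\langle n_Q,y\rangle+s\langle n_Q,\ba\rangle+t\langle n_Q,\bb\rangle\ge 0$, affine in $(s,t)$ with coefficients affine in $y$, where $\langle n_Q,\ba\rangle,\langle n_Q,\bb\rangle\ge 0$ because $\ba,\bb\in C$. The $\ba$-ray sits at height $t=t_0(y)=\max_Q\bigl(-\langle n_Q,y\rangle/\langle n_Q,\bb\rangle\bigr)$, the maximum over facets with $\langle n_Q,\ba\rangle=0<\langle n_Q,\bb\rangle$; being a maximum of finitely many affine functions of $y$, it is continuous and bounded on the compact set $B^*$. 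The offset $s_0(y)$ of the $\bb$-ray is handled symmetrically. More generally, every extreme point of $X_y$ solves a $2\times2$ linear system built from two of these facet equations, whose coefficients are affine in $y$; there are only finitely many such candidate systems, each with solution affine in $y$, so a single constant $M$ satisfies $\mathrm{ext}(X_y)\subset B(\bzero,M)$ for all $y\in B^*$. In particular $|a_1(y)|,|b_1(y)|\le M$.

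For the Hausdorff bound I would combine $X_y=\mathrm{conv}(\mathrm{ext}(X_y))+F$ with $\mathrm{ext}(X_y)\subset B(\bzero,M)$, together with $a_1(y)+F\subset X_y$ and $F=\mathrm{rec}(X_y)$: every $p\in X_y$ writes as $c+k$ with $|c|\le M$, $k\in F$, giving $d(p,F)\le M$; and every $q\in F$ satisfies $q+a_1(y)\in X_y$ with $|a_1(y)|\le M$, giving $d(q,X_y)\le M$. Hence $d_H(X_y,F)\le M$ uniformly. The main obstacle is that the combinatorial type of $X_y$ — which facets bound it and where its corner vertices lie — can change as $y$ moves, so no single bounding face can be tracked. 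I circumvent this by never fixing a face: offsets and vertices are expressed as maxima (resp. finite families) of affine functions ranging over the \emph{entire}, finite collection of $(n-1)$-faces, whence continuity and the uniform bound follow automatically from compactness of $B^*$. The only other delicate point is the identification $\mathrm{rec}(X_y)=F$, which rests on $C\cap\mathrm{span}(F)=F$; this I obtain from the regularity of the facet containing $F$ and Lemma \ref{lem:Ger}(ii).
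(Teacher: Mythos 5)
Your proposal is correct, but it takes a genuinely different route from the paper's proof. The paper also starts from the nice ball of Lemma \ref{cor:inner}, but then translates it by a large multiple of $\ba$ and afterwards of $\bb$, so that for every $y$ in the resulting ball $B^*$ the projections $\pi_\bb(y)$ and $\pi_\ba(y)$ (along $-\bb$ and $-\ba$) land on two \emph{fixed} $(n-1)$-faces $Q_1,Q_2$, i.e.\ exactly on the rays of $\partial X_y$ with directions $\ba$ and $\bb$; the three bounds then follow from continuity of $y\mapsto |y-\pi_\ba(y)|,\ |y-\pi_\bb(y)|$ on the compact set $\overline{B^*}$ together with the elementary estimates $d_H(X_y,F+y)\le\sqrt{|y-\pi_\ba(y)|^2+|y-\pi_\bb(y)|^2}$ and $|a_1(y)|,|b_1(y)|\le |y|+|y-\pi_\ba(y)|+|y-\pi_\bb(y)|$. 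You bypass both the projection maps and the two-step translation construction: you use the facet half-space description of $C$, note that in the $(s,t)$-coordinates of $\Sigma+y$ every constraint is affine with $y$-independent leading coefficients, so each vertex of $X_y$ (in particular $a_1(y)$ and $b_1(y)$) is one of finitely many affine functions of $y$, and you obtain the Hausdorff bound from $X_y=\mathrm{conv}(\mathrm{ext}(X_y))+F$ once $\mathrm{rec}(X_y)=C\cap\Sigma=F$ is identified via the regular-boundary hypothesis and Lemma \ref{lem:Ger}(ii)--(iii). What your route buys: the bounds hold on \emph{any} compact subset of the nice ball, with no special positioning, and the combinatorial changes of $X_y$ as $y$ varies are handled automatically; it is also the cleaner argument. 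What it costs: you invoke several standard polyhedral facts the paper never develops and which you should cite (e.g.\ to Rockafellar \cite{Rock}) or prove — the facet representation $C=\bigcap_Q\{x:\langle n_Q,x\rangle\ge 0\}$, the identity $\mathrm{rec}(A\cap B)=\mathrm{rec}(A)\cap\mathrm{rec}(B)$, the Minkowski decomposition of a line-free polyhedron, that faces of a regular (simplicial) cone are spanned by subsets of its frame (needed for $Q\cap\Sigma=F$), and that the origin of an unbounded edge is an extreme point (needed to conclude $|a_1(y)|,|b_1(y)|\le M$). Two small points of precision: in your $2\times 2$ systems only the constant terms depend on $y$ (the coefficient matrix is constant), which is exactly why the solutions are affine in $y$; and only nonsingular pairs of facets can produce vertices, so the singular pairs may be discarded without loss.
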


 \begin{proof} Denote $\Sigma=\text{span}(\{\ba,\bb\})$, where $\{\ba,\bb\}$ is the frame of $F$.
 Let $B(x_0,r)$ be the ball such that
 $\Sigma_y\cap C$ are corner-cut slices for all $y\in B(x_0,r)$ (see  Lemma \ref{cor:inner}).
 From now on, we call a ball with this property a \emph{nice ball}.

 Notice that if $y'=y+c_1\ba+c_2\bb$  with$c_1,c_2\in \R$, then $X_y=X_{y'}$.  Consequently,
If $B$ is a nice ball, then $B+c_1\ba+c_2\bb$ is also a nice ball. Also, a sub-ball of a nice ball
is also nice.

 Let $Q_1$ be the $(n-1)$-face of $C$ such that $Q_1\cap (\Sigma+x_0)$ is the ray in $\partial X_{x_0}$
 with direction $\ba$.
  Clearly $\pi_\bb$ maps  $X_{x_0}$ to $\partial X_{x_0}$; indeed,    $\pi_\bb(x)$ is the canonical  projection of $x$ if $\pi_\bb(x)$  belongs to the ray in $\partial X_{x_0}$
 with direction $\ba$, or equivalently, belongs to $Q_1$.

  We choose  $c>0$ large so that   $\pi_\bb(x_0+c\ba)\in Q_1$, and
denote $x_1=x_0+c\ba$.

Let $Q_1^\circ$ be the set of relative interior points of $Q_1$. We choose $x_2\in B(x_1,r)$
so that $\pi_\bb(x_2)\in Q_1^\circ$. (Indeed,
every point in the intersection $ Q_1^\circ+(x_1-\pi_\bb(x_1)) \cap B(x_1,r)$ fulfills
this requirement.)
   Let $r_2>0$ be a real number
such that
$$\pi_\bb(B(x_2,r_2))\subset Q_1 \text{ and } B(x_2,r_2)\subset B(x_1,r).$$

Let $Q_2$ be the $(n-1)$-face of $C$ such that
$Q_2\cap (\Sigma+x_2)$ is the ray on $\partial X_{x_2}$
 with direction $\bb$.
Similarly as above, we choose $c'$ large so that
  $\pi_\ba(x_2+c'\bb)\in Q_2$, and denote $x_3=x_2+c'\bb$.
  By the same argument as bove, there exists a ball $B(x_4, r_4)$ such that
  $$
 \pi_\ba(\overline{B(x_4,r_4)}) \subset Q_2
 \text{ and }
 \overline{B(x_4, r_4)}\subset B(x_3,r_2).
 $$
Notice that
\begin{equation}\label{eq:Q1}
 \pi_\bb(\overline{B(x_4,r_4)})\subset \pi_\bb(B(x_3, r_2))=\pi_\bb(B(x_3-c'\bb,r_2))=\pi_\bb(B(x_2,r'))\subset Q_1.
\end{equation}

Set $B^*=B(x_4,r_4)$. Clearly $\overline{B^*}$ is a nice ball. For every $y\in \overline{B^*}$,   $X_y$ is a corner-cut slice.
Moreover, by \eqref{eq:Q1},
$\pi_\bb(y)$ belongs to $Q_1\cap X_y$, which is the ray of $\partial X_y$ with direction $\ba$ ;
similarly, $\pi_\ba(y)\in Q_2$ and  locates on the ray with direction $\bb$ on $\partial X_y$.

\begin{figure}
  \includegraphics[width=0.4\textwidth]{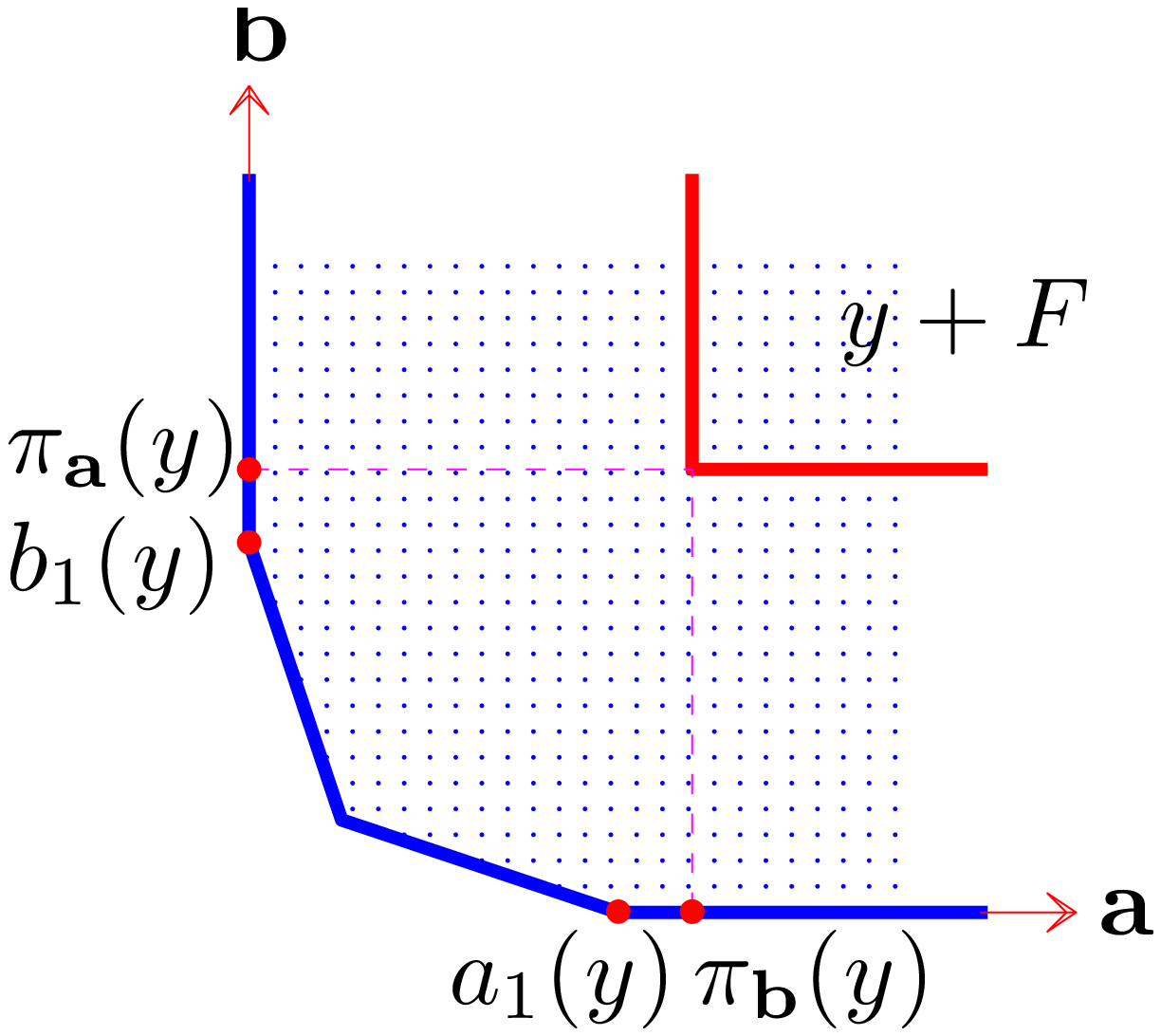}\\
 \caption{}
 \label{fig:metric}
\end{figure}

Since
$d_H(F, F+y)\leq  |y|$ and
$d_H(X_y, F)\leq d_H(X_y, y+F)+ d_H(y+F, F)$, to  show $d_H(F, X_y)$ is uniformly bounded,
we need only show that
 $$ \sup_{y\in \overline{B^*}}  d_H(X_{y}, F+y)<\infty.$$
Clearly
$$d_H(X_{y}, F+y)\leq \sqrt{|y-\pi_\ba(y)|^2+|y-\pi_\bb(y)|^2} $$
(since we assume that $\ba$ and $\bb$ are orthogonal, see Figure \ref{fig:metric},) so it is
uniformly bounded for $y\in \overline{B^*}$, since the right hand side of the above formula is a continuous function of $y$.

Finally, it is seen that both $|a_1(y)|$ and $|b_1(y)|$ are less than
$$|y|+|y-\pi_\ba(y)|+|y-\pi_\bb(y)|.$$
 The theorem is proved.
  \end{proof}

\section{\textbf{Polyhedral bodies}}\label{sec:shap}

Let $C$ be a convex polyhedral cone, and let $(T, {\cal J})$ be a local tiling of $C$.
 By Remark \ref{rem:normal},  we may  assume that
$\bzero \in T, \ \bzero \in \SJ, \  T\subset C, \text{ and } \SJ\subset C.$

\begin{lemma}\label{lem:unique} If $(T,\SJ)$ is a local tiling of a convex polyhedral cone $C$, then
 the origin $\bzero$ belongs to only one tile.
\end{lemma}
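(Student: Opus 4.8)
The plan is to exploit the fact that a convex polyhedral cone whose generators lie in an open half-space is \emph{pointed}, i.e.\ it contains no pair of opposite nonzero vectors. Under the normalization of Remark~\ref{rem:normal} we may assume $\bzero\in T$, $\bzero\in\SJ$, $T\subset C$ and $\SJ\subset C$; thus $T+\bzero=T$ is one tile containing the origin, and the whole content of the lemma is to show it is the only one. Notably, I do not expect to need the packing condition at all: the statement will follow from the geometry of $C$ together with the two containments $T\subset C$ and $\SJ\subset C$ supplied by the normalization.

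First I would record pointedness explicitly. Since each generator $\ba_j$ of $C$ satisfies $\langle \ba_j,\beta\rangle>0$ for the fixed $\beta$ in the definition of $C$, every nonzero $x\in C$ is a nonnegative, not-all-zero combination of the $\ba_j$ and hence has $\langle x,\beta\rangle>0$. Consequently $\langle -x,\beta\rangle<0$, so $-x\notin C$. In other words $C\cap(-C)=\{\bzero\}$.

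Next, suppose a tile $T+t$ with $t\in\SJ$ contains the origin. The condition $\bzero\in T+t$ is equivalent to $-t\in T$, and since $T\subset C$ this gives $-t\in C$; on the other hand $t\in\SJ\subset C$. Hence $t\in C\cap(-C)=\{\bzero\}$, forcing $t=\bzero$. Therefore the only translate of $T$ occurring in the tiling that contains $\bzero$ is $T$ itself, which is exactly the assertion.

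The one point to be careful about — and what I expect to be the only genuine subtlety — is that the packing condition forbids overlaps only \emph{in Lebesgue measure}, so a priori the origin, being a boundary point, could conceivably be shared by several tiles. The argument sidesteps this entirely by never invoking measure-disjointness and relying instead on the pointedness of $C$. Recognizing that the normalization's containments $T\subset C$ and $\SJ\subset C$, rather than any overlap condition, are what force uniqueness is the crux of the proof.
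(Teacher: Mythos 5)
Your proof is correct and takes essentially the same approach as the paper's: the paper supposes two tiles $T+a_1$ and $T+a_2$ contain $\bzero$, deduces that both $a_1-a_2$ and $a_2-a_1$ lie in $C$, and gets a contradiction from pointedness of $C$ --- exactly your $t\in C\cap(-C)=\{\bzero\}$ step, phrased after the normalization of Remark~\ref{rem:normal}. Your side remark is also consistent with the paper: its proof likewise never invokes measure-disjointness, only the containment of the tiles in $C$.
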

\begin{proof} Suppose on the contrary that two tiles $T+a_1$ and $T+a_2$ both contain $\bzero$.
Then $-a_1,-a_2\in T$. So $a_1-a_2\in T+a_1$ and $a_2-a_1\in T+a_2$. Therefore, both $a_1-a_2$ and $a_2-a_1$ belong to $C$, which is a contradiction.
\end{proof}

We call a set $\Omega$ a \emph{polyhedral  corner}, if there exist a point $x$,
a number  $r>0$ and a convex polyhedral cone $D$ such that
 $$\Omega=x+ B(\bzero,r)\cap D,$$
 and  we call $x$ the \emph{vertex} of the polyhedral corner.

 \begin{defi}{\rm
Let $T\subset \R^n$ be a compact  set. For a point $x\in \partial T$, if
there exists a real $r>0$, such that  $B_n(x,r)\cap T$ is a non-overlapping union
of several   polyhedral corners with the same vertex $x$,  then we call $x$  a `nice' point of $T$;
otherwise, we call $x$ a `bad' point of $T$.
If all points in $\partial T$ are `nice', then we call $T$ a \emph{polyhedral body}.
}
\end{defi}

 If $C$ is a convex polyhedral cone and $x\in \partial C$, then $B(x,r)\cap C$ is a finite non-overlapping union of  polyhedral corners
 for $r$ small.

  \begin{lemma}\label{lem:corner} Let $D_1,\dots, D_k$ be convex polyhedral cones of dimension $n$ such that their interiors are disjoint, then

 $(i)$ $B(\bzero,r)\setminus(D_1\cup\cdots \cup D_k)$ is a finite non-overlapping union of polyhedral corners.

 $(ii)$ Let $\Omega=B(\bzero,r)\cap (D_1\cup\cdots\cup D_k)$
and let $\nu$ be the $(n-1)$-dimensional Hausdorff measure, then for $\nu$-almost every
point $x\in \partial \Omega$ with $|x|<r$, there exists a real number $\delta>0$ such that
$\overline{B_n(x,\delta)}\cap \Omega$ is a closed half ball.
 \end{lemma}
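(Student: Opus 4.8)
The plan is to prove Lemma \ref{lem:corner} by reducing both parts to local statements about the cones $D_i$ near a single point, using the fact that intersecting a convex polyhedral cone with a small ball produces a finite non-overlapping union of polyhedral corners (as noted just before the lemma). For part $(i)$, I would first fix attention on the origin $\bzero$. Near $\bzero$, each $D_i$ looks like a convex polyhedral cone with vertex $\bzero$ (we may translate each $D_i$ if necessary so that $\bzero$ lies on its relative boundary or exterior, but since the $D_i$ are cones we should think of the configuration of directions at $\bzero$). The key observation is that the complement $B(\bzero,r)\setminus(D_1\cup\cdots\cup D_k)$ is, for small $r$, exactly $B(\bzero,r)$ intersected with the complement of the union of the cones $D_i$. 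The idea is that the set-theoretic complement of a finite union of convex polyhedral cones can be decomposed into finitely many convex polyhedral cones. This is a standard fact: the half-spaces bounding the various $(n-1)$-faces of the $D_i$ cut $\R^n$ into finitely many convex polyhedral cells (a hyperplane arrangement through $\bzero$), each of which is a convex polyhedral cone, and each such cell is either entirely inside some $D_i$ or entirely outside all of them. Collecting the cells lying outside all $D_i$ and intersecting with $B(\bzero,r)$ yields the desired finite non-overlapping union of polyhedral corners with common vertex $\bzero$.

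For part $(ii)$, the plan is to analyze the boundary $\partial\Omega$ where $\Omega=B(\bzero,r)\cap(D_1\cup\cdots\cup D_k)$ and show that the ``singular'' part of this boundary has $(n-1)$-dimensional measure zero. First I would decompose $\partial\Omega$, for a point $x$ with $|x|<r$ (so the spherical part of $\partial B(\bzero,r)$ is excluded), into pieces lying on the $(n-1)$-faces of the various $D_i$. At such a point $x$, because each $D_i$ is a polyhedral cone, the set $\overline{B_n(x,\delta)}\cap\Omega$ for small $\delta$ is a finite non-overlapping union of polyhedral corners with vertex $x$. I would argue that the set of $x$ where this local picture is \emph{not} a single half ball is concentrated on lower-dimensional strata: namely, the points lying on the intersection of two or more distinct supporting hyperplanes of the $D_i$, or at vertices/edges where the faces meet. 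These strata have dimension at most $n-2$, hence $\nu$-measure zero. Away from those strata, a point $x$ in the relative interior of a single $(n-1)$-face $Q$ of some $D_i$ that is a boundary face of $\Omega$ has a neighborhood in which $\Omega$ agrees with one side of the hyperplane $\span(Q)+x$, so $\overline{B_n(x,\delta)}\cap\Omega$ is a closed half ball.

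The main technical step, and the one requiring the most care, is establishing the measure-zero claim in $(ii)$: I must verify that the ``bad'' boundary points — where the local cone $D$ at $x$ is not a half-space — are contained in a finite union of affine subspaces of dimension $\le n-2$. The cleanest route is to use the finite hyperplane arrangement generated by the supporting hyperplanes of all the $(n-1)$-faces of all the $D_i$; the boundary $\partial\Omega\cap\{|x|<r\}$ lives in the union of these finitely many hyperplanes, and the locus where $x$ lies in two distinct such hyperplanes simultaneously is a finite union of $(n-2)$-dimensional affine subspaces, which is $\nu$-null. For $x$ off this null set, $x$ lies in the relative interior of exactly one face, and I need to rule out the degenerate possibility that $\Omega$ has $D_i$ on both sides of that single hyperplane (which would make $x$ interior to $\Omega$, hence not in $\partial\Omega$) or that $\Omega$ touches the hyperplane only from measure-zero directions; the convexity and full-dimensionality ($\dim D_i=n$) assumptions ensure that generically $\Omega$ fills exactly one closed half of the ball. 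Once this local half-ball structure is secured for $\nu$-almost every boundary point, the lemma follows. The hardest part will be handling the interaction of the several cones along shared faces carefully enough to confirm that, after discarding the $(n-2)$-dimensional skeleton, what remains is genuinely a half ball rather than some other polyhedral corner.
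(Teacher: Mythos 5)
Your proposal follows essentially the same route as the paper's proof: part $(i)$ via the hyperplane arrangement generated by the bounding hyperplanes of the $D_i$ (the paper additionally throws in the coordinate hyperplanes so that every cell of the arrangement is pointed, hence a convex polyhedral cone in the paper's sense), and part $(ii)$ by discarding the $\nu$-null set of pairwise intersections of these hyperplanes and observing that, at a boundary point $x$ lying on exactly one hyperplane, each full-dimensional $D_j$ meeting a small ball around $x$ must contain one of its two half balls, while $x\in\partial\Omega$ forbids both halves from being covered. The steps you flag as needing care are exactly the ones the paper settles with this local half-ball dichotomy, so no genuinely different machinery is involved.
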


 \begin{proof}  Each $D_j$ is bounded by a set of subspaces. Let $S_1,\dots, S_m$ be the collection of such spaces for $j=1,\dots, k$.

 (i) Let $S_{m+1}, \dots, S_{m+n}$ be the subspaces $\{(x_1,\dots, x_n)\in \R^n;~x_j=0\}$, $j=1,\dots, n$. Then
 $S_1,\dots, S_{m+n}$ decompose $\R^n$ into at most $2^{m+n}$ non-overlapping convex polyhedral cones, which we denote
 by $C_1,\dots, C_h$. Then
 $$
 B(\bzero,r)\setminus(D_1\cup\cdots \cup D_k)=\bigcup \left \{B(\bzero,r)\cap C_j;~ C_j \text{ is not a subset of } D_1\cup\cdots \cup D_k \right \}.
 $$

 (ii) Let $x\in \partial \Omega$. Then $x\in S_1\cup\cdots \cup S_m$.

 Suppose $x$ belongs to only one element in $\{S_1,\dots, S_m\}$, say,  $x\in S_{j'}$. Let $\delta$ be a real number such that
 $d(x, S_i)>\delta$ for all $i\neq j'$. The ball $B_n(x, \delta)$ is cut into two (closed) half balls by $S_{j'}$. For any $D_j$, $j\in \{1,\dots, k\}$, either $D_j$ contains a half  ball of $B(x,\delta)$, or disjoint with $B(x,\delta)$. Moreover, only one $D_j$ intersects $B(x, \delta)$, for otherwise, the two half balls
 of $B(x,\delta)$ belong to two different $D_j$, $j=1,\dots, k$, and so $x\in \Omega^\circ$, which is absurd.
 Therefore, $\overline{B(x,\delta)}\cap \Omega=\overline{B(x,\delta)}\cap D_{j}$ and it is a closed half ball.

 Finally, notice that $S_i\cap S_j$ is a $\nu$-zero set, the lemma is proved.
 \end{proof}

\begin{thm}\label{thm:body} Let $C$ be a convex polyhedral cone.  If $(T, {\cal J})$
is a local tiling of $C$ which covers  $C\cap B(\bzero, R)$ with $R>\diam(T)$, then  $T$ must be a polyhedral body.
\end{thm}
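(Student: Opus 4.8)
The plan is to show that \emph{every} boundary point of $T$ is a `nice' point, by working locally and reducing the question to one about unions of convex polyhedral cones, where Lemma \ref{lem:corner} does the final bookkeeping.

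First I would record a uniform covering radius. Since $\bzero\in T\subset C$ we have $T\subset\overline{B(\bzero,\diam T)}$, so for every $x\in T$ the ball $B(x,\rho)$ with $\rho:=R-\diam(T)>0$ lies inside $B(\bzero,R)$ and is therefore covered, i.e. $B(x,\rho)\cap C\subset T+\SJ$. Because $\SJ$ is discrete and the tiles have positive Lebesgue measure and are non-overlapping, only finitely many translates meet $\overline{B(x,\rho/2)}$; and since each tile is closed, a translate that meets every $B(x,\delta)$ must contain $x$. Hence for $\delta$ small the tiles containing $x$, say $T+t_1=T,\dots,T+t_k$ with $t_1=\bzero$, partition $B(x,\delta)\cap C$ up to measure zero. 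Translating by $-t_i$, the piece $B(x,\delta)\cap(T+t_i)$ is an exact copy of the local picture $B(x_i,\delta)\cap T$ of $T$ at the boundary point $x_i:=x-t_i$.

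The reduction step is to pass to the tangent cone. Blowing up the above partition about $x$ turns $B(x,\delta)\cap C$ into the tangent cone $C_x$ of $C$ at $x$ — which is $\R^n$ when $x\in C^\circ$ and a genuine convex polyhedral cone when $x\in\partial C$ — and turns the pieces into tangent cones $K_i$ of $T$ at the $x_i$. If one knows that each $K_i$ is a finite non-overlapping union of convex polyhedral cones with vertex $\bzero$ (equivalently, that $T$ is literally conical near each $x_i$, so that $B(x_i,\delta)\cap T=x_i+(B(\bzero,\delta)\cap K_i)$), then $K_1$, the local cone of $T$ at $x$ itself, equals $C_x$ with the other $K_i$ removed; Lemma \ref{lem:corner}(i) writes this complement as a finite non-overlapping union of polyhedral corners, and reading it back at $x$ gives exactly that $B(x,r)\cap T$ is such a union with common vertex $x$, i.e. $x$ is nice. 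The base of the scheme is the origin: since $(T,\SJ)$ covers a neighbourhood of $\bzero$ in $C$ and, by Lemma \ref{lem:unique}, $\bzero$ lies in no other tile, we get $B(\bzero,\delta)\cap C=B(\bzero,\delta)\cap T$, so the local cone of $T$ at $\bzero$ is just $C$ and is already polyhedral.

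The hard part will be establishing that these local pieces are polyhedral, equivalently that the interfaces between adjacent tiles are flat. The obstacle is genuine: tangent cones of an arbitrary compact set need be neither convex nor polyhedral, and nothing so far forbids $T$ from having a rounded or fractal boundary. The leverage I would exploit is that all tiles are \emph{translates} of the single set $T$, so where two tiles meet, one side of the interface is $\partial T+t_i$ and the other is $\partial T+t_j$ — the same shape is forced to fit against itself from opposite sides. An outward (convex) protrusion of $T$ cannot be filled by a translate of $T$, which protrudes the same way instead of being re-entrant; making this precise, via a supporting-hyperplane or Lebesgue-density argument along the interface, should force every interface to be hyperplanar, hence each local piece to be bounded by finitely many hyperplanes and thus a finite union of convex polyhedral corners. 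I expect this matching-rigidity step — together with the routine justifications that the blow-up limits are attained and that only finitely many translates are ever involved (this is where $R>\diam T$ is used) — to be the technical heart of the argument. Once it is in place, Lemma \ref{lem:corner}(i) finishes the interior case ($C_x=\R^n$) and the boundary case ($C_x$ a convex polyhedral cone inside which the $T$-pieces are confined) uniformly, so that all of $\partial T$ consists of nice points and $T$ is a polyhedral body.
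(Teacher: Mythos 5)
Your setup is sound and overlaps with the paper's as far as it goes: the observation that $R>\diam(T)$ gives every point of $\partial T$ a fully covered neighborhood in $C$, the base fact that $T$ coincides with $C$ near $\bzero$ (Lemma \ref{lem:unique}), and the use of Lemma \ref{lem:corner}(i) to write the complement of a union of polyhedral corners as a union of polyhedral corners are all the right ingredients. But your proof stops exactly where the theorem begins: you assume, and explicitly defer, the statement that each local piece $B(x_i,\delta)\cap T$ is conical/polyhedral, and that is not a technical verification --- it is the entire content of the theorem. Worse, the mechanism you propose for filling the gap (matching rigidity: ``an outward protrusion of $T$ cannot be filled by a translate of $T$'', hence interfaces are hyperplanar) is false as a local statement. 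Two translates of one compact set can fit along a curved interface: in $\R^2$, take $T=\{(x,y):0\le x\le 1,\ f(x)\le y\le f(x)+1\}$ for any continuous $f$; then $T$ and $T+(0,1)$ have disjoint interiors and meet exactly along the wavy graph of $f+1$, the protrusions of one copy being matched by re-entrant parts of the other because the top and bottom boundaries of $T$ are parallel translates of each other. Self-affine tiles, which tile $\R^n$ by translations while having fractal boundary, show the same thing: no purely local interface argument (supporting hyperplanes, Lebesgue density, or otherwise) can force flatness, because flatness simply does not follow from translation-tiling structure alone. What rules out curved interfaces here is global: the tiling fills a cone starting at its apex.

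The paper supplies precisely this global ingredient by a descent (extremal) argument that never proves flatness of any single interface directly. Choose $\beta$ with $\langle x,\beta\rangle>0$ for all $x\in C\setminus\{\bzero\}$, let $G\subset\partial T$ be the set of bad points, and let $z_0\in\overline{G}$ minimize $\langle\cdot\,,\beta\rangle$. Discreteness of $\SJ$ gives $\epsilon=\min\{\langle t,\beta\rangle:\ t\in\SJ\setminus\{\bzero\}\}>0$; pick $z\in G$ with $|z-z_0|<\epsilon/|\beta|$. If $z$ lies in no tile other than $T$, then $B(z,r)\cap T=B(z,r)\cap C$ for small $r$, so $z$ is nice, a contradiction. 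If $z$ also lies in $T+t_1,\dots,T+t_k$ with $t_j\neq\bzero$, then by Lemma \ref{lem:corner} at least one of these tiles must be bad at $z$ as well --- otherwise subtracting their polyhedral corners from the corner $B(z,r)\cap C$ would exhibit $B(z,r)\cap T$ as a finite union of polyhedral corners, making $z$ nice for $T$. Hence $z-t_j\in G$ for some $j$, and $\langle z-t_j,\beta\rangle\le\langle z_0,\beta\rangle+|\beta|\,|z-z_0|-\epsilon<\langle z_0,\beta\rangle$, contradicting the minimality of $z_0$. This is how the polyhedral structure seeded at the apex propagates to all of $\partial T$, and it is the step your proposal is missing; to complete your argument you would have to replace the matching-rigidity heuristic by some such induction or extremal scheme anchored at $\bzero$.
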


\begin{proof} By Lemma \ref{lem:unique},
  an open neighborhood ${\cal N}$ of $\bzero$ is contained in $T$, and ${\cal N}\cap \partial T={\cal N}\cap \partial C$, hence every
$x\in {\cal N}\cap \partial T$ is a `nice' point in $T$.
Also, notice that   $x\in \partial(T+t)$ is a `bad' point of the tile $T+t$ if and only if
$x-t\in \partial T$ is a `bad' point of the tile $T$.

 Suppose on the contrary   that the set of `bad' point in $\partial T$, which we denote by $G$, is not empty.
Let $\beta$ be a non-zero vector in $\R^n$ such that $\langle x, \beta\rangle>0$ for all $x\in C\setminus\{\bzero\}$.
Let $z_0$ be a point in $\overline G$  such that $\langle z_0, \beta \rangle$ attains the minimal value.
Let
$$
\epsilon=\min\{\langle t, \beta \rangle;~t\in \SJ\setminus\{\bzero\}\}.
$$
Clearly $\epsilon>0$ by the discreteness of $\SJ$.
Let   $z$ be a point in $G$ such that $|z-z_0|<\frac{\epsilon}{|\beta|}$;  a simple calculation shows that $\langle z-t, \beta\rangle< \langle z_0, \beta\rangle $ for any $t\in \SJ\setminus \{\bzero\}$.

If there is only one tile in $T+\SJ$ covering $z$, then for a small $r>0$,  $B_n(z,r)\cap T=B_n(z,r)\cap C$, which implies that $z$ is a `nice' point, a contradiction.

If there are exactly two tiles in $T+\SJ$ covering $z$, say, $z\in T\cap (T+t_1)$ where $t_1\neq \bzero$,
then
$$B_n(z,r)\cap C=B_n(z,r)\cap (T\cup (T+t_1))$$
for $r$ small. So, by Lemma \ref{lem:corner},
   $z$ is a `bad' point of
$T+t_1$ since $z$ is a `bad' point of $T$,  and $z-t_1$ is a `bad' point of $T$.
Hence $z-t_1\in G$ and $\langle z-t_1,\beta\rangle<\langle z_0, \beta\rangle$,
which contradicts the minimality of $z_0$.

If there are exactly $k+1$ number of tiles  covering $z$,
say, $z\in T\cap (T+t_1)\cap \dots \cap (T+t_k)$, then   by Lemma \ref{lem:corner},  for at least one $1\leq j\leq k$,
$z$ is  a `bad'  of $T+t_j$. By the same argument as above, we get  a contradiction.
The theorem is proved.
\end{proof}

Next, we show a local tiling of $C$ induces a local tiling of $Q$ for every $(n-1)$-face $Q$.

\begin{thm}\label{boundary tiling}   Let $C\subset \R^n$ be a convex polyhedral cone, and
$(T,\SJ)$ be a local tiling of $C$  which covers  $C\cap B(\bzero, R)$ with $R>\diam(T)$. Then    for any $(n-1)$-face $Q$ of $C$,
 $(T\cap Q, \SJ\cap Q)$ is a local tiling of $Q$ which covers $Q\cap B(\bzero, R)$.
\end{thm}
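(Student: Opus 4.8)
The plan is to verify separately the two defining properties of a local tiling for the pair $(T\cap Q,\SJ\cap Q)$ — covering and packing — after fixing convenient coordinates. Since $Q$ is an $(n-1)$-face with $Q=C\cap\text{span}(Q)$ by Lemma \ref{lem:Ger}(ii), the hyperplane $H:=\text{span}(Q)$ is a supporting hyperplane of $C$; I would choose coordinates so that $H=\{x_n=0\}$ and $C\subset\{x_n\ge 0\}$, giving $Q=C\cap\{x_n=0\}$ and, since $T\subset C$, the identity $T\cap Q=T\cap H$. The first thing to record is the elementary \emph{face property}: if $x\in Q$ and $x=u+v$ with $u,v\in C$, then $u,v\in Q$. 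This follows from the definition of a face applied to the segment $\overline{[u,\,u+2v]}$ (both endpoints lie in $C$, since $u+2v=x+v$, and its midpoint is $x\in Q$), and symmetrically to $\overline{[v,\,v+2u]}$.

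Covering is then immediate. Given $x\in Q\cap B(\bzero,R)\subset C\cap B(\bzero,R)$, the hypothesis furnishes $t\in\SJ$ with $x-t\in T$; writing $x=(x-t)+t$ with $x-t\in T\subset C$ and $t\in\SJ\subset C$, the face property yields $t\in Q$ and $x-t\in Q$, i.e. $t\in\SJ\cap Q$ and $x-t\in T\cap Q$. Hence $Q\cap B(\bzero,R)\subset (T\cap Q)+(\SJ\cap Q)$. The same computation, together with $t_n=0$ for $t\in Q$, shows that $(T+t)\cap H=(T\cap Q)+t$ and that only translations $t\in\SJ\cap Q$ meet $H$; containment $(T\cap Q)+(\SJ\cap Q)\subset Q$ is clear since $Q+Q=Q$.

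For packing I would argue by contradiction, and this is the step I expect to be the crux, since it requires upgrading the $n$-dimensional non-overlap to an $(n-1)$-dimensional one. Suppose $\nu\big(((T\cap Q)+t_1)\cap((T\cap Q)+t_2)\big)>0$ for distinct $t_1,t_2\in\SJ\cap Q$, where $\nu$ is $(n-1)$-dimensional Hausdorff measure on $H$; equivalently $A:=(T+t_1)\cap(T+t_2)\cap H$ has $\nu(A)>0$. Every $p\in A$ lies in $\partial(T+t_i)$, since an interior point of $T+t_i\subset\{x_n\ge 0\}$ cannot lie on $\{x_n=0\}$. By Theorem \ref{thm:body}, $T$ (hence each $T+t_i$) is a polyhedral body, so near any boundary point it is a finite non-overlapping union of polyhedral corners; applying Lemma \ref{lem:corner}(ii) locally shows that the set of $p\in\partial(T+t_i)$ at which $\overline{B(p,\delta)}\cap(T+t_i)$ fails to be a closed half ball is $\nu$-null. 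Discarding these two null sets from $A$ leaves a nonempty set; fix $p$ there.

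At such $p$, for small $\delta$ both $\overline{B(p,\delta)}\cap(T+t_1)$ and $\overline{B(p,\delta)}\cap(T+t_2)$ are closed half balls contained in $\{x_n\ge 0\}$ with centre $p\in\{x_n=0\}$ on their flat faces. A closed half ball centred at $p$ and contained in the half-space $\{x_n\ge 0\}$ must equal $\{x_n\ge 0\}\cap\overline{B(p,\delta)}$, because two half-spaces through the common boundary point $p$ with one contained in the other must coincide. Hence both half balls equal $\{x_n\ge 0\}\cap\overline{B(p,\delta)}$, so $T+t_1$ and $T+t_2$ share this full-dimensional region, contradicting that $(T,\SJ)$ is a packing of $C$. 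Therefore the translations are $\nu$-non-overlapping and contained in $Q$, so $(T\cap Q,\SJ\cap Q)$ is a packing of $Q$; combined with the covering of $Q\cap B(\bzero,R)$ and with $\bzero\in T\cap Q$, $\bzero\in\SJ\cap Q$, it is the desired local tiling.
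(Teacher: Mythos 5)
Your proof is correct and follows essentially the same route as the paper's: covering via the face identity $(T+\SJ)\cap Q=(T\cap Q)+(\SJ\cap Q)$, and packing via Theorem \ref{thm:body} plus Lemma \ref{lem:corner}(ii) to get the $\nu$-a.e.\ half-ball property, then a contradiction at a common half-ball point of two tiles. The only difference is that you supply details the paper leaves implicit (the segment argument for the face property, and the supporting-hyperplane coordinates forcing both half balls to be the upper half ball), which strengthens rather than changes the argument.
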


\begin{proof} By Theorem \ref{thm:body}, $T$ is  a polyhedral body. Let $\nu$ be the $(n-1)$-dimensional Hausdorff measure. We claim that  for $\nu$-almost every
point $x\in \partial T$, there exists a real number $\delta>0$ such that
$\overline{B_n(x,\delta)}\cap T$ is a closed half ball.
By compactness of $T$, there exists $x_j\in \partial T$, $r_j>0$, $j=1,\dots, k$, such that
each $B(x_j,r_j)\cap T$ is a finite non-overlapping union of  polyhedral corners and $\partial T$ is covered by
$\{B(x_j, r_j)\cap T; ~j=1,\dots, k\}$.
So our claim holds by Lemma \ref{lem:corner}(ii).

Since
$$
(T+\SJ)\cap Q=(T\cap Q)+ (\SJ\cap Q),
$$
we have that  $(T\cap Q, \SJ\cap Q)$ is   a covering of $Q\cap B(\bzero,R)$.
To prove the theorem, it suffices to show that $(T\cap Q, \SJ\cap Q)$ is a packing of $Q$, that is, the
intersection
\begin{equation}\label{eq:T-cap-Q}
((T\cap Q)+t_1) \cap ((T\cap Q)+t_2)
\end{equation}
is a $\nu$-zero set   for $t_1, t_2\in \SJ\cap Q$ and $t_1\neq t_2$.

 Suppose the intersection \eqref{eq:T-cap-Q} is not a $\nu$-zero set, then by the claim above,  there exists  a point   $x$ in the above intersection
 such that both $\overline{B_n(x,\delta)}\cap (T+t_1)$ and $\overline{B_n(x,\delta)}\cap (T+t_2)$
 are closed half balls for $\delta$ small enough.
  It follows that the closed half ball $\overline{B_n(x,\delta)}\cap C$ coincide with the above two half balls, and
  is a subset of both $T+t_1$ and $T+t_2$, a contradiction. The theorem is proved.
\end{proof}

\section{\textbf{Proof  of Theorem \ref{Main-1}(i)}}\label{sec:irregualr}

To prove Theorem \ref{Main-1}(i), we need only show that
every irregular convex polyhedral cone with regular boundary admits no translation tiling.
For if $C$ is an irregular cone whose boundary is not regular and if $C$ admits a `large' local tiling $(T, \SJ)$, then
there is an $(n-1)$-face $Q$ of $C$ is irregular, and  by Theorem \ref{boundary tiling}, $Q$ also admits a
`large' local tiling. Therefore, Theorem \ref{Main-1}(i) can be proved by induction.

\indent  {\textbf{Assumptions}} \textit{ In the rest of this section, we always assume that  $C$ is an irregular convex polyhedral cone
with regular boundary,
and $(T,\SJ)$ is a packing  of $C$ as well as  a covering of $C\cap B(\bzero, R)$ with $R>\diam(T)$.}

  Under the above assumptions,  we are going  to deduce a contradiction.

\indent {\textbf{Notations}} \emph{ Let $F$ be a feasible $2$-face of $C$ with frame $\ba$ and $\bb$. We may assume that $\ba$ and $\bb$ are orthogonal by applying a linear transformation.
 Let $B^*\subset C^\circ$ be a ball in Theorem \ref{thm:metric-one}, that is,
for any $y\in B^*$,
$$X_y=(\text{span}(F)+y)\cap C$$
 is a   corner-cut slice, and
 \begin{equation}\label{eq:N}
  N=\sup_{y\in  B^*} d_H(X_y, F)<\infty,
  \end{equation}
  \begin{equation}\label{eq:M}
  M=\sup_{y\in  B^*} |a_1(y)|<\infty,\ \   M'=\sup_{y\in  B^*} |b_1(y)|<\infty,
  \end{equation}
  where $a_1(y)$ and $b_1(y)$ are the origins of the rays on $\partial X_y$ with directions $\ba$ and $\bb$, respectively. Denote   $\Sigma=\text{span}(F)$.
 }

\begin{lemma}\label{lem:dilation} Let $\delta>0$.
For any  $y\in \delta B^*$, $X_y$ is a corner-cut slice and
  \begin{equation}\label{eq:dilation}
  \sup_{y\in \delta B^*} d_H(X_y, F)=\delta N,
  \end{equation}
  where $N$ is defined in \eqref{eq:N}.
\end{lemma}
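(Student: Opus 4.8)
The plan is to exploit the fact that $C$, the subspace $\Sigma=\text{span}(F)$, and the $2$-face $F=[\ba,\bb]^+$ are all invariant \emph{as sets} under dilation by any positive scalar, and to combine this with the homogeneity of the Hausdorff metric. This reduces everything on $\delta B^*$ to the already-established facts on $B^*$.

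First I would record the basic dilation identities. Since $C$ is a cone, $\delta C=C$; since $\Sigma$ is a subspace, $\delta\Sigma=\Sigma$; and since $F$ is itself a cone (it is a $2$-face), $\delta F=F$. The key computation is then that for every $y$,
$$
\delta X_y=\delta\bigl((\Sigma+y)\cap C\bigr)=(\Sigma+\delta y)\cap C=X_{\delta y},
$$
where I use $\delta(\Sigma+y)=\Sigma+\delta y$ and $\delta C=C$, together with the fact that the dilation $x\mapsto\delta x$ is a bijection and therefore commutes with intersection.

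Next I would transfer the corner-cut property to $\delta B^*$. Because $x\mapsto\delta x$ is an affine bijection, it sends extreme points to extreme points and preserves convexity, so $X_{\delta y}=\delta X_y$ has exactly as many extreme points as $X_y$. Writing an arbitrary point of $\delta B^*$ as $\delta y$ with $y\in B^*$ (possible since $\delta B^*=\{\delta y:\ y\in B^*\}$ by definition of the dilation), and recalling that $X_y$ is a corner-cut slice for every $y\in B^*$ by the defining property of $B^*$ in Theorem \ref{thm:metric-one}, I conclude that $X_z$ is a corner-cut slice for every $z\in\delta B^*$; note also $\delta B^*\subset\delta C^\circ=C^\circ$, so each such slice passes through an interior point and is a genuine slice.

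Finally I would compute the supremum using the scaling law $d_H(\delta A,\delta B)=\delta\,d_H(A,B)$ of the Hausdorff metric. For $z=\delta y\in\delta B^*$,
$$
d_H(X_z,F)=d_H(\delta X_y,\delta F)=\delta\,d_H(X_y,F),
$$
the first equality using $X_z=\delta X_y$ and $F=\delta F$. Taking the supremum over $z\in\delta B^*$, equivalently over $y\in B^*$, and pulling out the factor $\delta$ yields
$$
\sup_{z\in\delta B^*}d_H(X_z,F)=\delta\sup_{y\in B^*}d_H(X_y,F)=\delta N.
$$
There is no serious obstacle in this argument; the only point requiring a little care is the bookkeeping that $F$ itself, rather than a translate of $F$, is dilation-invariant, which is precisely why the cone $F=[\ba,\bb]^+$ and not the affine slice is the correct object to measure the Hausdorff distance against.
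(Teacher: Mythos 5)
Your proof is correct and follows essentially the same route as the paper: the identity $X_{\delta y}=\delta X_y$ from the cone/subspace dilation invariance, preservation of the corner-cut property under the affine bijection $x\mapsto\delta x$, and the homogeneity $d_H(\delta A,\delta B)=\delta\,d_H(A,B)$ of the Hausdorff metric. Your explicit remark that $\delta F=F$ (so the comparison set in the Hausdorff distance is itself dilation-invariant) makes precise a step the paper leaves implicit, but it is the same argument.
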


\begin{proof} Since
$$
X_y=(\Sigma+y)\cap C=\delta\left ((\Sigma+y/\delta)\cap C\right )=\delta X_{y/\delta},
$$
that is, $X_y$ is a dilation of $X_{y/\delta}$, we infer that  $X_y$ is  a corner-cut slice for $y\in \delta B^*$. \eqref{eq:dilation} follows from the fact that $d_H(\delta A, \delta B)=\delta d_H(A,B)$.
\end{proof}

 Now we study the intersection of $\Sigma+y$ and the local tiling $(T, \SJ)$.
We define the \emph{$x$-section} of a set $A$ as
\begin{equation}\label{x-slice}
(A)_{x}=A\cap (\Sigma+x).
\end{equation}

Here are some easy facts.

\begin{lemma}\label{lem:up}
(i)  If $t\in \SJ\setminus F$, then $(T+t)\cap F=\emptyset$.

(ii) For $t\in \SJ\cap F$, we have $(T+t)_{x}=(T)_{x}+t$.
\end{lemma}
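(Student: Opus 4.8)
The plan is to derive both parts directly from the normalization \eqref{eq:norm} (so that $t\in\SJ\subset C$ and $T\subset C$) together with the two structural features of $F$: it is a face of $C$, and it is a convex cone, hence closed under positive scaling, with $F\subset\Sigma=\text{span}(F)$.

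For part (i) I would argue by contradiction: suppose some $t\in\SJ\setminus F$ admits a point $p\in(T+t)\cap F$. Then $p\in F\subset C$, $t\in\SJ\subset C$, and $p-t\in T\subset C$. The idea is to realize $p$ as the midpoint of the segment with endpoints $2t$ and $2(p-t)$; since $C$ is a cone both endpoints lie in $C$, and their midpoint is $t+(p-t)=p\in F$. The defining property of a face then forces both endpoints into $F$, so in particular $2t\in F$, whence $t\in F$ because $F$ is a cone --- contradicting $t\notin F$. The only case needing separate mention is the degenerate one $2t=2(p-t)$, where the segment collapses and the relative-interior clause does not apply; but then $p=2t\in F$ directly, and again $t\in F$ by the cone property. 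This yields $(T+t)\cap F=\emptyset$.

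For part (ii) the key observation is that $t\in F\subset\Sigma$, so translation by $t$ maps the affine plane $\Sigma+x$ onto itself, i.e. $(\Sigma+x)+t=\Sigma+x$. I would then simply compute
$$
(T+t)_x=(T+t)\cap(\Sigma+x)=(T+t)\cap\big((\Sigma+x)+t\big)=\big(T\cap(\Sigma+x)\big)+t=(T)_x+t,
$$
where the second equality uses the $t$-invariance of $\Sigma+x$, and the third uses the elementary identity $(A+t)\cap(B+t)=(A\cap B)+t$.

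Since both statements are elementary consequences of the face/cone structure of $F$ and the inclusion $F\subset\Sigma$, I do not expect a genuine obstacle. The two points to watch are the degenerate collapsed segment in (i), and the observation in (ii) that it is precisely the \emph{subspace} membership $t\in\Sigma$ --- not merely $t\in C$ --- that renders the slicing plane $\Sigma+x$ invariant under translation by $t$.
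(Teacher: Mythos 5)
Your proof is correct and takes essentially the same route as the paper's: part (i) is the same face-plus-cone argument (the paper uses the segment from $x=p-t$ to $t$, whose midpoint $p/2$ lies in $F$ by the cone property of $F$, while you scale up and use the segment from $2t$ to $2(p-t)$ with midpoint $p$), and part (ii) is the identical one-line computation exploiting that $t\in F\subset\Sigma$ makes the slicing plane $\Sigma+x$ invariant under translation by $t$.
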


\begin{proof} (i) For otherwise, there exists $x\in T$ and $t\in \SJ\setminus F$ such that
$x+t\in F$. So $x/2+t/2$, a convex combination of $x$ and $t$ belongs to $F$.
 By the definition of a face, $x, t\in F$, which is a contradiction.

 (ii)  Since  $(T+t)_{x}=(T+t)\cap (\Sigma+x)=(T\cap (\Sigma-t+x))+t=T\cap (\Sigma+x)+t=(T)_{x}+t.$
\end{proof}

Let $\mu_r$ be the $r$-dimensional Lebesgue measure.

\begin{lemma}\label{lem:packing}  For any $\delta>0$, there exists $y\in \delta B^*$ such that
$((T)_y, \SJ\cap F)$ is a packing of $X_y$.
 \end{lemma}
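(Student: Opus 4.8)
The plan is to verify the two requirements in the definition of a packing (Definition \ref{def:packing}) separately: the containment $(T)_y+(\SJ\cap F)\subset X_y$, and the pairwise $\mu_2$-disjointness of the translates on the plane $\Sigma+y$. The first is automatic and holds for every $y$: since $F\subset\Sigma$, Lemma \ref{lem:up}(ii) gives $(T)_y+t=(T+t)_y=(T+t)\cap(\Sigma+y)$ for each $t\in\SJ\cap F$, and because $(T,\SJ)$ is a packing of $C$ we have $T+t\subset C$, hence $(T)_y+t\subset C\cap(\Sigma+y)=X_y$. So everything reduces to choosing $y\in\delta B^*$ for which the sections $\{(T)_y+t:\ t\in\SJ\cap F\}$ overlap only in $\mu_2$-null sets.

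For the disjointness I would argue by slicing. Fix distinct $t_1,t_2\in\SJ\cap F$ and set $S=(T+t_1)\cap(T+t_2)$. By Lemma \ref{lem:up}(ii),
\[
((T)_y+t_1)\cap((T)_y+t_2)=\big[(T+t_1)\cap(T+t_2)\big]\cap(\Sigma+y)=S\cap(\Sigma+y),
\]
so the overlap of the two sections is precisely the $y$-section of $S$. Since $(T,\SJ)$ is a packing of $C$, the compact set $S$ satisfies $\mu_n(S)=0$. Now write $\R^n=\Sigma\oplus W$ with $W=\Sigma^\perp$ and let $P\colon\R^n\to W$ be the orthogonal projection; note that the section $(T+t)\cap(\Sigma+y)$ and the slice $X_y$ depend on $y$ only through $P(y)$. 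Fubini's theorem applied to the null set $S$ yields
\[
0=\mu_n(S)=\int_W\mu_2\big(S\cap(\Sigma+w)\big)\,d\mu_{n-2}(w),
\]
so the set $\{w\in W:\ \mu_2(S\cap(\Sigma+w))>0\}$ is $\mu_{n-2}$-null. Because $\SJ$ is discrete, $\SJ\cap F$ is countable and there are only countably many pairs $(t_1,t_2)$; the union of the corresponding null sets is still $\mu_{n-2}$-null, so its complement $W_0\subset W$ has full $\mu_{n-2}$-measure.

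Finally I would select the base point. The dilated ball $\delta B^*$ is an $n$-dimensional ball, so $P(\delta B^*)$ is an $(n-2)$-dimensional ball (an irregular cone forces $\dim C\geq 3$, whence $n\geq 3$ and $W\neq\{\bzero\}$), and therefore has positive $\mu_{n-2}$-measure. Hence $P(\delta B^*)$ meets the full-measure set $W_0$, and any $y\in\delta B^*$ with $P(y)\in W_0$ makes every pair of translates overlap in a $\mu_2$-null set. By Lemma \ref{lem:dilation}, $X_y$ is a corner-cut slice for such $y$, and combined with the automatic containment this shows $((T)_y,\SJ\cap F)$ is a packing of $X_y$. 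The only delicate point is this Fubini passage from $\mu_n$-null overlaps in $C$ to $\mu_2$-null overlaps in a single plane: it is exactly what forces the conclusion to hold merely for \emph{some} $y$ rather than all $y$, and it causes no trouble only because the admissible planes $P(\delta B^*)$ form a set of positive $\mu_{n-2}$-measure.
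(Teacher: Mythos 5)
Your proof is correct and follows essentially the same route as the paper: both arguments apply Fubini's theorem to the $\mu_n$-null intersections $(T+t_1)\cap(T+t_2)$ to conclude that their $2$-dimensional sections are null for almost every plane parallel to $\Sigma$, and then pick $y\in\delta B^*$ avoiding the countably many exceptional null sets (the paper parametrizes the planes by an $(n-2)$-dimensional cube $M\subset\delta B^*$ orthogonal to $\Sigma$, while you use the orthogonal projection onto $\Sigma^\perp$, which is the same idea). Your explicit verification of the containment $(T)_y+(\SJ\cap F)\subset X_y$ and of $n\geq 3$ are minor additions the paper leaves implicit.
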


 \begin{proof}  Fix $t_1,t_2\in \SJ\cap F$ with $t_1\neq t_2$.
 Notice that $T+t_1$ and $T+t_2$ are disjoint in  measure $\mu_n$.
Let  $M\subset \delta B^*$ be  a cube of dimension $n-2$
 such that $M$ is  orthogonal to $\Sigma$.
Let $f: C^\circ \to \R$ be the function
$$
f(x)=\mu_2((T+t_1)_x\cap (T+t_2)_x).
$$
By Fubini Theorem,
$$
\int_{x\in M} f(x)dx\leq \mu_n((T+t_1)\cap (T+t_2))=0,
$$
so $f(x)=0$ \textit{a.e.} $x\in M$.

Hence, for a pair $t_1,t_2\in\SJ$, to insure $\mu_2((T+t_1)_x\cap (T+t_2)_x)=0$, we need to eliminate
a measure zero set of $M$. After eliminating the measure zero sets for all pairs  in $\SJ\cap F$, a point $y$ in the remaining set   fulfills the requirement of the lemma.
 \end{proof}


\begin{lemma}\label{lem:covering} There exists  $\delta>0$  such that for any   $y\in  \delta B^*$,
$B(\bzero, R)\cap X_{ y}$ is covered by $(T)_{ y}+(\SJ\cap F)$.
\end{lemma}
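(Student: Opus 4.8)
The plan is to show that, once $\delta$ is small, every translate $T+t$ that reaches into the slice $X_y\cap B(\bzero,R)$ must be indexed by $t\in\SJ\cap F$; the conclusion then follows at once from the covering hypothesis together with Lemma~\ref{lem:up}(ii). Indeed, suppose $p\in X_y\cap B(\bzero,R)$. Since $X_y\subset C$ and $(T,\SJ)$ covers $C\cap B(\bzero,R)$, there is $t\in\SJ$ with $p\in T+t$. If I can guarantee $t\in F$, then because $p\in\Sigma+y$ we have $p\in(T+t)_y=(T)_y+t\subset(T)_y+(\SJ\cap F)$ by Lemma~\ref{lem:up}(ii), which is exactly what is claimed. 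So the whole problem reduces to ruling out translates $T+t$ with $t\in\SJ\setminus F$ from meeting $X_y\cap B(\bzero,R)$.

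First I would cut the index set down to a finite one. Set
$$\SJ_0=\{t\in\SJ:\ (T+t)\cap B(\bzero,R)\neq\emptyset\}.$$
Using the normalization $\bzero\in T$ (Remark~\ref{rem:normal}) and $\diam(T)<R$, any such $t$ satisfies $|t|<R+\diam(T)$, so $\SJ_0$ is finite by the discreteness of $\SJ$; crucially, $\SJ_0$ depends only on $R$ and the tiling, not on $y$. For each $t\in\SJ_0\setminus F$, Lemma~\ref{lem:up}(i) gives $(T+t)\cap F=\emptyset$, and since $T+t$ is compact and $F$ is closed this yields a positive gap $d(T+t,F)>0$. Let $d_0$ be the minimum of these finitely many gaps; then $d_0>0$, and $d_0$ is again independent of $y$.

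Next I would exploit the proximity of the slice to $F$. By Lemma~\ref{lem:dilation}, for every $y\in\delta B^*$ the set $X_y$ is a corner-cut slice with $d_H(X_y,F)\le\delta N$, so $X_y$ lies in the closed $\delta N$-neighborhood of $F$. Choose $\delta$ so small that $\delta N<d_0$. Then for any $t\in\SJ_0\setminus F$ and any $q\in T+t$ one has $d(q,F)\ge d(T+t,F)\ge d_0>\delta N$, whereas every point of $X_y$ is within $\delta N$ of $F$; hence $(T+t)\cap X_y=\emptyset$. Combining the pieces, any translate covering a point $p\in X_y\cap B(\bzero,R)$ lies in $\SJ_0$ (it meets $B(\bzero,R)$) and must avoid $\SJ_0\setminus F$, so its index lies in $\SJ\cap F$, and the reduction of the first paragraph finishes the proof.

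The main obstacle to keep in view is uniformity in $y$: the single $\delta$ must work simultaneously for all $y\in\delta B^*$. This is secured because the finite \emph{bad} set $\SJ_0$ and the gap $d_0$ are defined purely through $B(\bzero,R)$ and the fixed feasible face $F$, while the scale-invariant estimate $d_H(X_y,F)\le\delta N$ of Lemma~\ref{lem:dilation} controls the proximity of every slice at once. A minor point to verify is that $\delta B^*\subset C^\circ$, which holds since $C$ is a cone and $B^*\subset C^\circ$.
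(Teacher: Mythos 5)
Your proof is correct and follows essentially the same route as the paper: use Lemma~\ref{lem:up}(i) to get a positive gap $d(T+t,F)>0$ for the finitely many relevant $t\in\SJ\setminus F$, use the scaling estimate of Lemma~\ref{lem:dilation} to force $X_y$ into a $\delta N$-neighborhood of $F$, choose $\delta$ below the gap so that such tiles miss $X_y$, and finish with Lemma~\ref{lem:up}(ii). The only difference is cosmetic: where the paper says ``without loss of generality $\SJ$ is finite,'' you justify this explicitly via the finite set $\SJ_0$ of tiles meeting $B(\bzero,R)$, which is a slightly more careful rendering of the same step.
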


 \begin{proof}
Without loss of generality, we may assume that $\SJ$ is  a finite set.
Let
$$
\varepsilon=\min\{d(T+t, F);~ t\in \SJ\setminus F \}.
$$
By Lemma \ref{lem:up},   $d(T+t, F)>0$ for all $t\in \SJ\setminus F$, so $\varepsilon >0$.
In other words, we have that
\begin{equation}\label{eq:far}
d(T+t, F)\geq \varepsilon \quad  \text{ for all } t\in \SJ\setminus F.
\end{equation}

We choose $0<\delta<\varepsilon/N$,  where $N$ is defined in \eqref{eq:N}. Pick any $y\in \delta B^*$, by Lemma
\ref{lem:dilation},
\begin{equation}\label{eq:dF}
d_H(X_{  y}, F)<\varepsilon .
\end{equation}
Equation \eqref{eq:far} together with \eqref{eq:dF} imply that
$$(T+t)\cap X_{ y}=\emptyset \text{ for } t\in \SJ\setminus F,$$
so
$ X_{y} \cap B(\bzero, R )$ is covered by  $(T+t)_y$ with
 $t\in \SJ \cap F$. Finally,   $(T+t)_y=(T)_y+t$ by Lemma \ref{lem:up}(ii). The lemma is proved.
\end{proof}

\medskip
\noindent\textbf{Proof of Theorem \ref{thm:Main-1}(i).}
We prove assertion (i) of the theorem by induction on the dimension of $C$.

For $\dim C=1$ or $2$, the cone must be regular, and the theorem holds automatically.

Suppose $\dim C=n$. Assume on the contrary that $C$ is irregular, $(T,\SJ)$ is a packing of $C$ as well as a covering of $C\cap B(\bzero,R)$ with $R>\diam(T)$.

If $C$ has irregular boundary, then an $(n-1)$-face $Q$ of $C$ is an irregular cone.
By Theorem \ref{boundary tiling}, $(T\cap Q, \SJ\cap Q)$ is a
packing of $Q$ and a covering of $Q\cap B(\bzero, R)$, which is   impossible by our induction hypothesis.
So $C$ must be an irregular cone with regular boundary. Now we use the notations listed in the beginning of this section.

Let $0<\kappa<R-\diam(T)$.
Let $\delta>0$ be the constant in Lemma \ref{lem:covering} and satisfies the additional requirement
$$
\delta<\min \left \{ \frac{\kappa}{M}, \frac{R-\kappa}{M+M'} \right \},
$$
where $M$ and $M'$ are the constants in formula \eqref{eq:M}.

Let $y$ be a point in $\delta B^*$ satisfying the requirements of Lemma \ref{lem:packing}.
Then members in the cluster
\begin{equation}\label{eq:cluster}
\{(T)_y+t;\ t\in \SJ\cap F\}
\end{equation}
are disjoint in $\mu_2$, and cover the set $B(\bzero, R)\cap X_y$.

 Since $\delta<\kappa/M$, by \eqref{eq:M} and $X_y=\delta X_{y/\delta}$,  for any $y\in \delta B^*$,
$|a_1(y)|<\kappa$ where $a_1(y)$ is the origin
 of the ray on  $\partial X_y$ with direction $\ba$.
  Then $B(a_1(y), R-\kappa)\subset B(\bzero, R)$ and hence the cluster in \eqref{eq:cluster} is a packing of $X_y$ and covers the set $B(a_1(y), R-\kappa)\cap X_y$.
  Similarly, from $\delta<(R-\kappa)/(M+M'))$, we deduce that
  $R-\kappa>|a_1(y)-b_1(y)|$ for any $y\in \delta B^*$.
However, in next section, we prove that this is impossible (Theorem \ref{thm:tiling}), and we get a contradiction. This completes the proof of the theorem.
$\Box$
\medskip

%

\section{\textbf{Corner-Cut region can not be tiled by translations of  one set}}\label{sec:cut}

Let $X\subset \mathbb{R}^2$ be a unbounded convex region determined by a system of liner inequalities:
 \begin{equation}
 a_jx+b_j y+c_j\geq 0, \quad 1\leq j\leq N.
 \end{equation}
We call $X$ a \emph{corner-cut region}, if  $X$ has at least two extreme points, and the two rays
on $\partial X$  are not parallel.

Let $a_1, a_2,\cdots, a_q$ be the extreme points of $\partial X$ from left to right, and write
$$\partial X= \ell_0 \cup \ell_1 \cup \cdots \cup \ell_q,$$
where $\ell_0$ and $\ell_q$ are two rays, and the other $\ell_j=\overline{[a_j, a_{j+1}]}$ are line segments.

\begin{theorem}\label{thm:tiling}
Let $X\subset \mathbb{R}^2$ be a corner-cut region,  $T$ be a compact set, and  $\SJ\subset \R^2$ be a finite set.
Let
$$R>\max\{\text{diam}(T), |a_1-a_2|\}.$$
Then   the following two items can not be fulfilled at the same time:

(i)  $X\cap B(a_1,R)$ is covered  by $T+\SJ$ and $T+\SJ\subset X$;

(ii) The members in $\{T+t;~t\in \SJ\}$ are disjoint in Lebesgue measure.
\end{theorem}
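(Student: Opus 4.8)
The plan is to argue by contradiction, assuming a finite $\SJ$ realizes both (i) and (ii), and to extract an impossible configuration from the geometry at the first corner $a_1$. Since $R>\max\{\diam(T),|a_1-a_2|\}$, the whole edge $\ell_1=\overline{[a_1,a_2]}$ and an initial stretch of the ray $\ell_0$ of length exceeding $\diam(T)$ both lie in $B(a_1,R)$, hence are covered; in particular at least two distinct translates of $T$ are needed to cover $\ell_0\cap B(a_1,R)$. I would first record, for each edge of $X$, a \emph{floor} reduction: if $L=\{x:\langle x,\mathbf n\rangle=c\}$ is the supporting line of $X$ along one of the edges $\ell_0,\ell_1,\ell_2$ with inner normal $\mathbf n$ and $\mu=\min_{x\in T}\langle x,\mathbf n\rangle$, then any tile $T+t\subset X$ meeting $L$ must satisfy $\langle t,\mathbf n\rangle=c-\mu$, and its trace on $L$ is the translate $T_L+t$ of the floor fiber $T_L=\{x\in T:\langle x,\mathbf n\rangle=\mu\}$. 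Consequently each edge of $X$ is tiled, in one dimension, by translates of a floor fiber of $T$, all contained in $L\cap X$.

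Next I would pin down the tile at the corner. Locally near the extreme point $a_1$ the region $X$ is a wedge of interior angle $\theta_1\in(0,\pi)$, so the same argument as in Lemma \ref{lem:unique}, applied to this wedge, shows $a_1$ lies in a unique tile $T+t_0$; arguing as in the proof of Theorem \ref{thm:body} (via Lemma \ref{lem:corner}) this tile fills the whole wedge and thus realizes a vertex $w_1=a_1-t_0$ of $T$ of interior angle exactly $\theta_1$, whose two edges point along $\bu$ (the direction of $\ell_0$) and $\bv$ (the direction of $\ell_1$ toward $a_2$). The analogous statement at $a_2$ produces a second vertex $w_2$ of $T$ of angle $\theta_2$. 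Combining this with the floor reduction, I would observe that $w_1,w_2$ are precisely the two extreme points of the floor fiber $T_{L_1}$ of $T$ along $L_1$: since $(T+t_0)\cap L_1=T_{L_1}+t_0\subset L_1\cap X=\ell_1$, this fiber cannot overshoot $a_2$, so its span is at most $|a_1-a_2|$; whereas the $\ell_0$-fiber, rooted at $w_1$, may extend only in the $+\bu$ direction, because the line of $\ell_0$ leaves $X$ on the $-\bu$ side of $a_1$.

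The final and hardest step is to turn this rigid picture into a contradiction. Each of the one-dimensional edge tilings is, on its own, perfectly possible; the impossibility must come from the fact that one and the same translation tile has to produce all of them simultaneously, with the corner vertices $w_1,w_2$ serving as fiber endpoints, while the two rays $\ell_0,\ell_q$ are not parallel. I would close this by an extremal/descent argument of the type used in Theorem \ref{thm:body}: choosing $\beta$ with $a_1$ the strict minimizer of $\langle\cdot,\beta\rangle$ on $X$, the tile $T+t_0$ is the lowest one, and I would peel upward along $\ell_0$, using $R>\diam(T)$ to guarantee a genuine second tile on the ray and the matching of interior angles at the flat boundary points of $\ell_0$, to show that the edge data forced at $w_1$ propagate into a configuration that cannot simultaneously close up the finite edge $\ell_1$ at the differently angled corner $a_2$.

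The main obstacle I anticipate is precisely this last bookkeeping: ruling out every way the non-parallel, rigid vertex/edge data of $T$ could be arranged. A naive induction on $\#\SJ$ breaks down because deleting a corner tile destroys the convexity of $X$, so the region ceases to be a corner-cut region and the induction hypothesis no longer applies. I therefore expect to phrase the descent directly in terms of the floor fibers and the forced vertex angles $\theta_1,\theta_2,\dots$ rather than by removing tiles, exploiting that the fiber along the bounded edge $\ell_1$ is length-constrained by $|a_1-a_2|$ while the fibers along the two non-parallel rays impose mutually incompatible one-sided growth on the same vertices of $T$.
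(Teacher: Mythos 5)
Your preliminary observations are correct (the floor-fiber reduction along supporting lines, the uniqueness of the tile at each extreme point via the midpoint argument of Lemma \ref{extreme}/\ref{lem:unique}, and the fact that the corner tile reproduces the wedge angle locally), but the proof has a genuine gap exactly where you acknowledge it: the contradiction itself is never derived. ``Peel upward along $\ell_0$ and show the forced data cannot close up at $a_2$'' is a plan, not an argument, and it is not clear it can be completed as stated. Angle-and-fiber bookkeeping is intrinsically delicate here because $T$ is only assumed compact (not connected, not convex, not equal to $\overline{T^\circ}$), each one-dimensional edge tiling is individually realizable, and nothing in the vertex data of $w_1,w_2$ is obviously inconsistent: a single set can perfectly well have two vertices with the two prescribed angles and length-constrained fibers. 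Also, your claim that $w_1,w_2$ are the two extreme points of the same fiber $T_{L_1}$ presupposes that one tile covers a neighborhood of the whole edge $\overline{[a_1,a_2]}$; this is the paper's Lemma \ref{lem:nb}, which requires a separate nontrivial argument (the trapezoid Lemma \ref{lem:trapezoid}, proved in Appendix A, ruling out a tiling of a thin strip by several translates with offsets on the base line) --- your floor-fiber reduction alone does not yield it, since a priori several translates of the floor fiber could tile $\ell_1$.

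For comparison, the paper's mechanism is not combinatorial bookkeeping but a topological crossing argument. After normalizing so that $a_1=\bzero$, $\ell_0$ is vertical and $\ell_q$ is horizontal, one takes $z_0$ to be the first point on $\partial X$ to the right of $a_1$ that is not in the relative interior of $T$, so $z_0\in T+t^*$ with $t^*\neq\bzero$; Lemma \ref{lem:nb} places $z_0$ strictly beyond $a_2$, and elementary inequalities on real parts ($0<\Real(z_0-t^*)<\Real(z_0)$, $0<\Real(t^*)<\Real(z_0)$) force $z_0-t^*$ to lie on the open boundary arc $\gamma$ from $a_1$ to $z_0$. Letting $b$ be the lowest nonzero point of $\SJ$ on $\ell_0$ and $\kappa$ the boundary arc from $b$ to $z_0$, one has an open set $U$ with $\kappa\subset U\subset T$, and the Intermediate Value Theorem shows the two translates $b+\kappa$ and $t^*+\kappa$ must cross; the crossing point then lies in $(b+T)^\circ\cap(t^*+T)^\circ$, contradicting the packing hypothesis. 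This single crossing step is what your proposal is missing, and it is precisely where the non-parallelism of the two rays (the ``bend'' in $\kappa$) enters; if you want to salvage your approach, you should look for a replacement of comparable force rather than an enumeration of vertex/edge configurations.
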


Note that in the above theorem, we do not assume that $T=\overline{T^\circ}$.

 \begin{proof}
 By applying an affine map,
 without loss of generality, we may assume that $a_1=\bzero$ is the origin, and the two rays
  on $\partial X$ are $\ell_0=\{0\}\times [0,+\infty)$ and $\ell_q=a_q+ [0,+\infty)\times \{0\}$.
  (We remark that under this assumption, it holds that
  $
  |a_1-a_2|\leq |a_1-a_q|.
  $
So we can use Theorem \ref{thm:tiling} in the previous section.)
For simplicity, we identify $\R^2$ to the complex plane $\mathbb C$.

Suppose on the contrary that  $(T,\mathcal{J})$ is a pair satisfying the two items in Theorem \ref{thm:tiling}.
By Remark \ref{rem:normal}, without loss of generality, we may assume $\bzero\in T$,  $\bzero\in \mathcal{J}$,
   $\mathcal{J}\subset X$, and $T\subset X$.

\begin{lemma}\label{extreme}
For any $b\in \{a_1,\cdots,a_q\}$, $b$ belongs to exact one tile in $\{T+t:t\in \mathcal{J}\}$.
Especially, $T$ is the only tile contains the point $a_1=\bzero$.
\end{lemma}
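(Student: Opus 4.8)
The plan is to prove the two halves of the statement separately: first the \emph{uniqueness} (``at most one'' tile), which carries the real geometric content and holds at \emph{every} extreme point, and then the \emph{existence} (``at least one''), which I expect to follow directly from the covering hypothesis.

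For uniqueness the decisive observation is that $X$ is convex and each $a_i$ is an \emph{extreme point}, so the cone of feasible directions of $X$ at $b:=a_i$ is \emph{pointed}. Concretely, let $C_b$ be the smallest convex cone with $X-b\subseteq C_b$ (the tangent cone). Since $b$ is an extreme point, the interior angle of $X$ at $b$ is strictly less than $\pi$, so $C_b$ is contained in an open half-plane together with the origin; in particular $C_b\cap(-C_b)=\{\bzero\}$. The two facts I will use are therefore $X-b\subseteq C_b$ (immediate from convexity, taking the scaling parameter equal to $1$) and the pointedness $C_b\cap(-C_b)=\{\bzero\}$.

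Now suppose, for contradiction, that $b$ belongs to two tiles $T+t_1$ and $T+t_2$ with $t_1\neq t_2$ and $t_1,t_2\in\mathcal{J}$. Put $q_j=b-t_j$. Then $q_j\in T$ because $b\in T+t_j$, and $q_1\neq q_2$ since $t_1\neq t_2$. As $T+t_j\subseteq X$, we get $(T+t_j)-b\subseteq X-b\subseteq C_b$, and $(T+t_j)-b=T-q_j$; hence $T-q_j\subseteq C_b$ for $j=1,2$. Feeding $q_2\in T$ into the inclusion for $j=1$ and $q_1\in T$ into the inclusion for $j=2$ yields $q_2-q_1\in C_b$ and $q_1-q_2\in C_b$ simultaneously. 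By pointedness $q_1-q_2\in C_b\cap(-C_b)=\{\bzero\}$, so $q_1=q_2$, a contradiction. This shows each extreme point lies in at most one tile; specialised to $b=a_1=\bzero$ (where $\bzero\in T$ by the normalisation of Remark \ref{rem:normal}) it shows $T$ is the unique tile containing the origin. The pleasant feature is that this uses only that $T+t_1$ and $T+t_2$ are translates of the \emph{same} set $T$ sitting inside the convex region $X$; no information on the shape of $T$ is needed, which is essential here since we do not assume $T=\overline{T^\circ}$.

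For existence, the covering hypothesis gives $b\in T+\mathcal{J}$ for every extreme point $b\in X\cap B(a_1,R)$; this certainly includes $a_1$ (covered by $T$ itself, as $\bzero\in T$) and $a_2$ (since $R>|a_1-a_2|$), and any other $a_i$ falling inside $B(a_1,R)$. The step I expect to be the real obstacle is conceptual rather than computational: one must check carefully that the tangent cone at an extreme point is genuinely pointed, i.e.\ that the interior angle is strictly below $\pi$. This is precisely where it matters that $b$ is an \emph{extreme} point rather than an interior point of some $\ell_j$ (where the angle would be $\pi$ and the cone a half-plane, for which the argument collapses), and where the standing assumption that the two boundary rays of the corner-cut region are not parallel enters.
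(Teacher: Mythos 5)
Your proof is correct and is essentially the paper's own argument in tangent-cone clothing: your inclusions $q_2-q_1\in C_b$ and $q_1-q_2\in C_b$ are exactly the paper's observation that $z_1=b-t_1+t_2\in T+t_2\subset X$ and $z_2=b-t_2+t_1\in T+t_1\subset X$, and the pointedness of $C_b$ at an extreme point is just a repackaging of the defining property of extremality, which the paper invokes directly via $b=(z_1+z_2)/2$. One small correction: pointedness needs no appeal to the interior angle or to the non-parallel-rays assumption --- for any convex set and any extreme point $b$, if $\pm d\in C_b$ with $d\neq\bzero$ then $b$ is a proper convex combination of two points of $X$, so the cone is automatically pointed.
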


\begin{proof}
Assume that $b\in (T+t_1)\cap (T+t_2)$, where $t_1,t_2\in \mathcal{J}$ and $t_1\neq t_2$. Then
$b-t_1,b-t_2\in T$. So
$$z_1=b-t_1+t_2\in T+t_2\subset X \text{ and }z_2=b-t_2+t_1\in T+t_1\subset X.$$ Hence $b=({z_1+z_2})/{2}$,
which contradicts that $b$ is an extreme point.
\end{proof}

The following is a technical lemma we need in the proof of the following Lemma \ref{lem:nb}.

 \begin{lemma}\label{lem:trapezoid} Let $A\subset \R^2$ be a trapezoid, and let $L=[0,1]\times\{0\}$ be the base line of $A$ with shorter length. Let $T$ be a compact subset of $A$, and $\SJ=\{t_0=\bzero, t_1,\dots, t_p\}$ be a subset of $L$ with $p\geq 1$. Then
$(T, \SJ)$ can not be a tiling of $A$.
 \end{lemma}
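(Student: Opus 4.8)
The plan is to cut the trapezoid by horizontal lines, thereby turning the two-dimensional tiling into a one-parameter family of one-dimensional tilings of intervals whose lengths vary continuously, and then to play the \emph{continuous} variation of these lengths against the \emph{rigidity} (discreteness) of one-dimensional tilings.

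First I would normalize: since $L$ is an edge of the convex polygon $A$, the whole trapezoid lies on one side of the $x$-axis, so after a reflection we may assume $A\subset\{y\ge 0\}$ with $L$ as its bottom (shorter) base. The boundary of $A$ between the two parallel bases consists of the two straight legs, so the horizontal slice $A_y=A\cap(\R\times\{y\})$ is an interval whose length $w(y)$ is an affine function of $y\in[0,h]$, strictly increasing from $w(0)=1$ to the length $W>1$ of the longer base. Because every translation vector lies on $L$, it is horizontal, say $t_i=(s_i,0)$ with $s_i\in[0,1]$ and $s_0=0$; hence $(T+t_i)_y=T_y+s_i$, where $T_y$ is the corresponding slice of $T$.

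The key step is the slicing. Writing $B=\{s_0,\dots,s_p\}$, the set equality $T+\SJ=A$ restricts on each line to $\bigcup_i(T_y+s_i)=A_y$, and a Fubini argument exactly as in Lemma \ref{lem:packing} shows that for almost every $y$ the translates $\{T_y+s_i\}$ are pairwise disjoint in $\mu_1$. Thus for almost every $y$ the slice gives a genuine one-dimensional tiling $T_y\oplus B=A_y$ of an interval of length $w(y)$ by the \emph{fixed} finite set $B$; translating $A_y$ to start at the origin, this says that $[0,w(y))$ is tileable by $B$ for almost every $y$.

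The crux, and the step I expect to be the main obstacle, is the rigidity of one-dimensional tilings: for a fixed finite $B\ni 0$, the set $\mathcal W_B=\{w>0:\ [0,w)=S\oplus B\ \text{for some }S\}$ is discrete (hence countable or empty). The mechanism is a de Bruijn-type left-to-right propagation (\cite{deB,Niven}): if $s_1$ is the smallest positive element of $B$, then to cover $[0,s_1)$ only the translate by $0$ is available, forcing $[0,s_1)\subseteq S$; this in turn forces membership on the next block, and so on, so that the tiling of $[0,\infty)$ by $B$, if one exists, is unique, and the admissible finite lengths $w$ are exactly the points at which this half-line tiling can be truncated, a discrete set. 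Making this propagation rigorous for an arbitrary finite $B$ (in particular allowing incommensurate gaps between the $s_i$) is the only genuinely technical point, and is where the one-dimensional analysis has to be carried out carefully.

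Granting the rigidity, the contradiction is immediate. As $y$ ranges over $[0,h]$, the affine map $y\mapsto w(y)$ is a bijection onto $[1,W]$ with nonzero slope, so it carries and reflects Lebesgue-null sets; hence ``almost every $y$'' corresponds to ``almost every $w\in[1,W]$.'' Then almost every $w$ in the nondegenerate interval $[1,W]$ would lie in the discrete, measure-zero set $\mathcal W_B$, which is absurd. Therefore no such tiling $(T,\SJ)$ of $A$ exists.
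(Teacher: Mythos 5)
Your approach is correct, but it takes a genuinely different route from the paper's. The paper never slices: it works directly in two dimensions with a de Bruijn-type propagation. It shows that the parallelogram $I$ spanned by $[0,t_1]$ and the left leg satisfies $I\subset T$, proves by induction that $\SJ\cap[0,mt_1]\subset t_1\Z^+$ and that each $I+ut_1$ lies in a single tile up to measure zero, and then reaches a contradiction at the right end of the shorter base: the tile $T+t_p$ must reproduce the corner of $A$ at the point $1$ (formed with the right leg), while the propagated structure says that $T$ near $1-t_p$ is a half-ball or a parallelogram corner with left-leg slope, and the two legs are not parallel precisely because the bases are unequal. You instead use Fubini to reduce to a one-parameter family of one-dimensional tilings and trade the corner-shape clash for a clash between the continuously varying slice length $w(y)$ and the discreteness of lengths tileable by the fixed finite set $B$; note that both proofs exploit the same geometric feature (unequal bases: non-parallel legs for the paper, non-constant $w(y)$ for you), and both run a propagation, yours in dimension one, the paper's in dimension two. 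Your route is more modular, isolating a clean 1D rigidity lemma; the paper's is self-contained and deliberately parallels its proof of Theorem \ref{info-2}. Two points remain to make your argument complete, and both are fillable. First, your slices tile only in measure (overlaps are $\mu_1$-null, coverage exact), so $\mathcal W_B$ must be defined via tilings-in-measure and the propagation run almost everywhere; this works through the a.e.\ recursion $\mathbf{1}_S(x)=1-\sum_{i\ge 1}\mathbf{1}_S(x-s_i)$, which determines any such tiling a.e.\ by induction on the blocks $[ks_1,(k+1)s_1)$. Second, the asserted discreteness of $\mathcal W_B$ needs a short separation argument: every tileable length satisfies $w\ge s_1+s_p$ (since $[0,s_1)\subset S$ a.e.\ while $S+s_p\subset[0,w]$ forces $S\subset[0,w-s_p]$), and if $w<w'$ are both tileable, then by uniqueness of the recursion the two tilings agree a.e.\ on $[0,w]$, whence the portion of the larger tiling lying in $(w,w']$ tiles an interval of length $w'-w$, giving $w'-w\ge s_1+s_p$. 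Thus distinct tileable lengths are $(s_1+s_p)$-separated, $\mathcal W_B$ is discrete, and your continuity-versus-discreteness contradiction goes through.
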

The proof of the above lemma is very similar to the proof of Theorem \ref{info-2}, but much more simpler.
We put the proof in Appendix A.
In the following, the topology we use is  the relative topology of $X$.

\begin{lemma}\label{lem:nb}
A neighborhood of $\overline{[a_1,a_2]}\subset T$.
\end{lemma}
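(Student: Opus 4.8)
The plan is to show that the single tile $T$ fills a relative neighbourhood of the entire edge $\overline{[a_1,a_2]}=\ell_1$, by propagating the coverage outward from the corner $a_1=\bzero$. First I would set up the base of this propagation at the origin. By Lemma \ref{extreme}, $T$ is the only tile containing $\bzero$; since $\mathcal{J}$ is finite, the closed set $\bigcup_{t\in\mathcal{J}\setminus\{\bzero\}}(T+t)$ avoids $\bzero$, so there is $\rho>0$ with $B(\bzero,\rho)\cap(T+t)=\emptyset$ for every $t\neq\bzero$. Because $(T,\mathcal{J})$ covers $X\cap B(\bzero,R)$, this forces $X\cap B(\bzero,\rho)\subset T$, i.e. $T$ already contains a relative neighbourhood of $a_1$.

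The key geometric fact I would isolate next is that every tile touching the supporting line of $\ell_1$ is translated parallel to that line. Let $L_1$ be the line through $a_1,a_2$; since $\ell_1\subset\partial X$ and $X$ is convex, $L_1$ is a supporting line, and $X$ (hence every tile $T+t\subset X$) lies in the closed half-plane $H$ bounded by $L_1$. As $a_1=\bzero\in L_1$, the line passes through the origin, so the height functional $h(x)=\langle x,w\rangle$ (with $w$ the inner normal, $h\ge 0$ on $H$, $h=0$ on $L_1$) is linear. Then $h\ge 0$ on $T$ and $h(\bzero)=0$ with $\bzero\in T$ give $\min_T h=0$, whence $\min_{T+t}h=\min_T h+\langle t,w\rangle=\langle t,w\rangle$. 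A tile $T+t\subset H$ therefore satisfies $\langle t,w\rangle\ge 0$, and if it also meets $L_1$ then $\langle t,w\rangle=0$, i.e. $t$ is parallel to $L_1$.

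With this in hand I would run a connectedness argument along $\ell_1$. Let $G\subset\ell_1^\circ$ be the set of edge points admitting a relative half-disk neighbourhood contained in $T$; then $G$ is relatively open and, by the first step, nonempty. For closedness, suppose $p^*\in\ell_1^\circ$ is a frontier point of $G$. Then $p^*\in T$ because $T$ is closed, while points of $X$ arbitrarily close to $p^*$ on the $a_2$-side lie outside $T$, hence in another tile; by finiteness this is a fixed tile $T+t^*$ with $t^*\neq\bzero$, and since it meets $L_1$ at $p^*$ the structural fact gives $t^*\parallel L_1$. I would then build a trapezoid $A\subset X$ whose shorter base is a subsegment of $\ell_1\subset L_1$ straddling $p^*$ and which fans upward into $X$ (the fanning coming from the convexity of $X$ near the non-parallel adjacent edges), chosen small enough that only tiles meeting $L_1$ reach it. These are all $L_1$-parallel translates of $T$, so restricted to $A$ they form a tiling of $A$ whose translation set lies on the base line and has cardinality at least two, contradicting Lemma \ref{lem:trapezoid}. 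Hence $G$ is closed, so $G=\ell_1^\circ$; combined with $a_1,a_2\in T$ (closedness of $T$, plus Lemma \ref{extreme} at the extreme point $a_2$) this yields a relative neighbourhood of $\overline{[a_1,a_2]}$ inside $T$.

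I expect the main obstacle to be this last trapezoid reduction. The parallel-translation fact is clean, but one must construct $A$ so that the pieces $A\cap(T+t_i)$ genuinely constitute a translation tiling of $A$ by a single compact set along the base line, exactly matching the hypotheses of Lemma \ref{lem:trapezoid}; the fanning-out geometry, which is what makes base-parallel translates unable to cover the widening region, is the crucial and most delicate point to pin down.
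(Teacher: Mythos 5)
Your supporting-line fact is exactly the paper's first step (the paper phrases it as: every tile meeting $\overline{[a_1,a_2]}$ has its translate lying on $\overline{[a_1,a_2]}$, since that edge is a face of $X$ and its line passes through $a_1=\bzero$), and Lemma \ref{lem:trapezoid} is indeed the right tool. But the step you yourself flag as delicate is a genuine gap, and in the form you set it up it would fail. For a trapezoid $A$ straddling the frontier point $p^*$, with shorter base a subsegment of $\ell_1$, the pieces $(T+t_j)\cap A$ are \emph{not} translates of a single compact set: one has $(T+t_j)\cap A=\bigl(T\cap(A-t_j)\bigr)+t_j$, and since $A-t_j\neq A$, the sets $T\cap(A-t_j)$ differ from tile to tile. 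Lemma \ref{lem:trapezoid} requires one compact set translated along the base, so it cannot be invoked for your local trapezoid, and nothing in your proposal bridges this.

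The paper's resolution is to take the trapezoid globally rather than locally: $A=X\cap(L+[0,\delta]\cdot\mathbf{i})$, the intersection of $X$ with a thin strip over the \emph{whole} line $L$ containing $\overline{[a_1,a_2]}$. Two consequences of your own height-functional fact then make the hypotheses of Lemma \ref{lem:trapezoid} hold verbatim: first, each relevant translate $t_j$ lies on $L$, so the strip is invariant under translation by $t_j$, and since $T+t_j\subset X$ this gives $(T+t_j)\cap A=(T\cap A)+t_j$, i.e.\ the pieces genuinely are translates of the single compact set $T\cap A$; second, any tile meeting the strip must have its translate inside the strip, so choosing $\delta$ below the least positive height of points of the finite set $\SJ$ ensures that the only tiles meeting $A$ are those with $t_j\in\overline{[a_1,a_2]}$, which is the shorter base of $A$ and contains $\bzero$. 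Note that this global construction also makes your open/closed propagation along $\ell_1$ unnecessary: if any tile besides $T$ met the edge at all, one would already have $p\geq 1$ translates on the base and an immediate contradiction to Lemma \ref{lem:trapezoid}; hence $T$ is the only tile meeting $\overline{[a_1,a_2]}$, and finiteness of $\SJ$ then yields the desired neighborhood.
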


\begin{proof} By our assumption, $\overline{[a_1,a_2]}\subset B(\bzero,R)$ and is covered by $(T,\SJ)$.

Let $t_0=\bzero,t_1,\cdots,t_p$ be the elements in $\mathcal{J}$ satisfying
$$(T+t_j)\cap \overline{[a_1,a_2]}\neq \emptyset.$$
Then $t_0,t_1,\cdots,t_p\in \overline{[a_1,a_2]}$, since $\overline{[a_1,a_2]}$ is a face of $X$.

To prove the lemma, we need to show that $p=0$. Suppose on the contrary $p\geq 1$.

 Assume that $t_0,t_1,\cdots, t_p$ are arranged from left to right on $\overline{[a_1,a_2]}$ .
Let $L$ be the line containing $\overline{[a_1,a_2]}$.
There exists $\delta>0$ such that no element of $\SJ\setminus\{t_0,t_1,\dots,t_p\}$
locates in strip between $L$ and $L+\delta{\mathbf i}$. Denote
$$
A=X\cap (L+[0, \delta]\cdot {\mathbf i}),
$$
then using $(T+t_j)\cap (L+[0, \delta]\cdot {\mathbf i})=(T+t_j)\cap A$, it is easy to show that
$$
A=(T\cap A)+\{t_0,\dots, t_p\},
$$
and the right hand side is a tiling of $A$.
We choose $\delta$ small to ensure  $A$ is a trapezoid. By Lemma \ref{lem:trapezoid},
this is impossible.
The lemma is proved.
\end{proof}

Let $z_0\in \partial X$ be the first point on the right side of $a_1$ such that $z_0$ is not a relative interior point of  $T$, that is,  there exists $t^*\in \mathcal{J}\setminus \{\bzero\}$, such that $z_0\in T+t^*$.
 Then $z_0\not\in \{a_1,\cdots,a_q\}$ by Lemma \ref{extreme} and
 $z_0$ is on the right side of $a_2$ by Lemma \ref{lem:nb}.

 Let $\gamma$ be the open broken line from $a_1$ to $z_0$ on $\partial X$. Since $z_0-t^*\in T\subset X$ and $t^*\in \mathcal{J}\subset X$, their real parts must be non-negative, \textit{i.e.},
 \begin{equation}\label{eq:positive}
 Re(z_0-t^*)\geq 0 \text{ and } Re(t^*)\geq 0.
 \end{equation}

 If $Re(z_0-t^*)=0$, then $t^*$ and $z_0$ are located in the same vertical line, so to guarantee $z_0\in T+t^*$,
 we must have $z_0=t^*$. Since the slope of the line containing $z_0$ is larger than the slope of the
 line containing $\overline{[a_1,a_2]}$, we get that $T+t^*$ is not a subset of $X$, a contradiction.
 If $Re(t^*)=0$, then $T+t^*$ can not contain $z_0$, which is also a contradiction.
 So the equalities in \eqref{eq:positive} are strict, and consequently,
 \begin{equation}\label{middle-1}
 0< Re(z_0-t^*)< Re(z_0),
 \end{equation}
 \begin{equation}\label{middle-2}
 0<Re(t^*)<Re(z_0).
 \end{equation}
Moreover, since $z_0$ is a lowest point in $t^*+T$ (w.r.t. the vertical direction), $z_0-t^*$ must be a lowest point in $T$ (w.r.t. the vertical direction),
hence, by formula \eqref{middle-1}, we have
  \begin{equation}
  z_0-t^*\in \gamma^\circ.
  \end{equation}

\begin{figure}
  \includegraphics[width=0.6\textwidth]{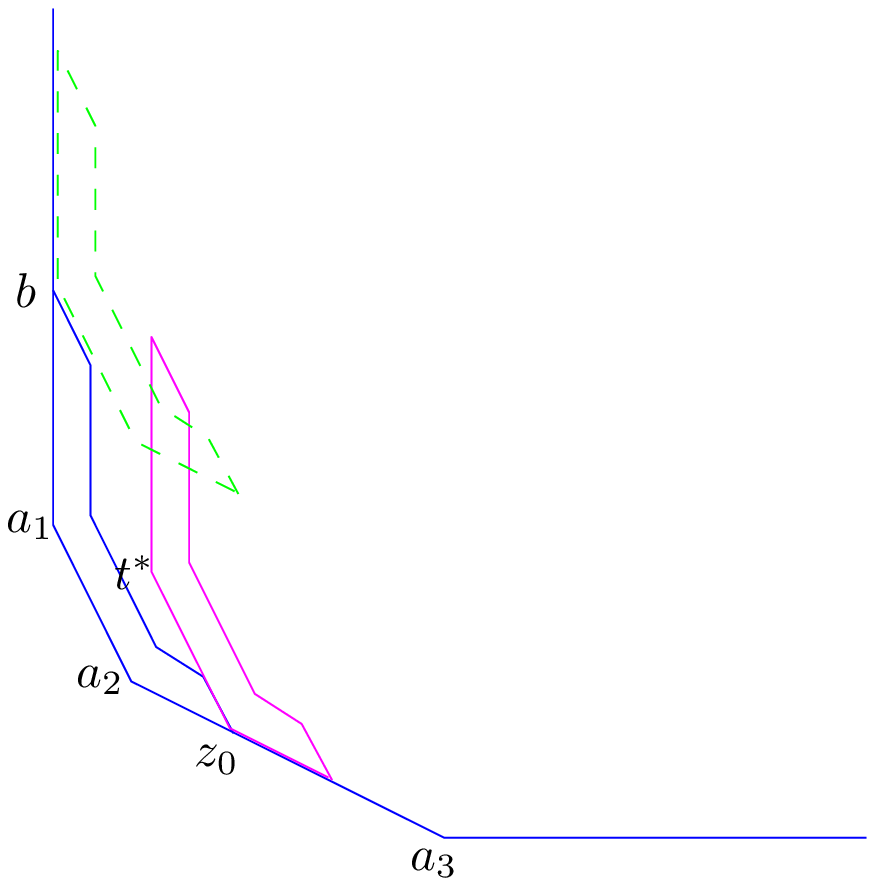}\\
  \caption{}
  \label{fig:U}
\end{figure}

Let $b\neq \bzero$ be the smallest point on $\ell_0\cap \mathcal{J}$.
Clearly the interval $\overline{[b, a_1]}\setminus \{b\}$ is contained in $T$ and does not intersect any other tiles. Denote by $\kappa$  the open
broken line from $b$ to $z_0$ on $\partial X$, then there is an open set $U$ such that
$$
\kappa\subset U\subset T.
$$
See Figure \ref{fig:U}, where we use a polygon to illustrate the open set $U$.

  Next we show that
  \begin{equation}\label{eq:cross}
  (b+\kappa)\cap (t^*+\kappa)\neq \emptyset.
  \end{equation}
   We claim that
  $t^*+b$, the initial point of $t^*+\kappa$, is above $b+\kappa$,
   and $b+z_0$, the terminal point of $b+\kappa$, is above $t^*+\kappa$.
   The first assertion holds, since by formula \eqref{middle-2}, $t^*$ is above the curve $\kappa$, and the second assertion holds
   since the point $b+(z_0-t^*)$ is above $\kappa$. Our claim is proved.
   If $t^*$ is below the curve $b+\gamma$, then apparently \eqref{eq:cross} holds.
   If $t^*$ is above $b+\gamma$, regarding $b+\gamma$ and $t^*+\gamma$ as graphs of two functions and using the Intermediate Value Theorem, we conclude
    $(b+\gamma)\cap (t^*+\gamma)\neq \emptyset$ and \eqref{eq:cross} is confirmed.

     Let $x$ be the intersection of the two curves in \eqref{eq:cross},
    then $x+[0,\delta]^2$ belongs to both $b+U$ and $t^*+U$ for $\delta$ small.
  It follows that  $(b+T)^\circ\cap (t^*+T)^\circ \neq \emptyset$, which is a contradiction.
 \end{proof}

\section{\textbf{From tilings of $(\R^+)^n$ to tilings of $(\Z^+)^n$}}\label{sec:regular}

Let $T$ be a compact set satisfying $T=\overline{T^\circ}$. Let $R>\diam(T)$.
 Let $(T,\SJ)$ be a packing of $(\R^+)^n$ as well as a covering of $(\R^+)^n\cap B(\bzero,R)$; we call
such  $(T,\SJ)$ a \emph{local tiling} in this section.
 As before, we  may assume that
 $\bzero\in T,$ $\bzero\in \SJ,$  $T\subset (\R^+)^n,$ and $\SJ\subset (\R^+)^n.$

 Since $R>\diam(T)$, for each $j\in \{1,\dots, n\}$, $\SJ\cap \be_j\R^+$ contains at least one non-zero element.
 By applying a dilation matrix $U=\text{diag}(u_1,\dots, u_n)$, we may assume that
\begin{equation}\label{eq:normal}
\be_1,\dots, \be_n\in \SJ,  \text{ and }\lambda \be_j\not\in \SJ
\text{ for all }0< \lambda <1 \text{ and all }j=1,\dots,n.
\end{equation}
Since $(T, \SJ\cap B(\bzero,R))$ also covers $(\R^+)^n\cap B(\bzero,R)$, without loss of generality, we  assume that
\begin{equation}\label{eq:R}
\SJ\subset B(\bzero,R);
\end{equation}
especially, $\SJ$ is a finite set.

The tiling of $\R^+$ has been  characterized by Odlyzko implicitly.
 Some idea of this section  comes from   Odlyzko \cite{Od78}.

  \begin{thm}\label{thm:odly} (\cite{Od78}) Let $(T,\SJ)$ be a tiling of $\R^+$. Then
  there exist a real number $c>0$ such that
$cT=E+[0,1]$, where $E\subset \Z^+$.
\end{thm}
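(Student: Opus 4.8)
The plan is to normalize by a single dilation and then show that, after that dilation, \emph{every} boundary point of $T$ is an integer; this is exactly the assertion $cT=E+[0,1]$ with $E\subset\Z^+$. First I would normalize. Since $\SJ$ is discrete and (after recentering as in Remark~\ref{rem:normal}) $\bzero\in T\cap\SJ$ and $T,\SJ\subset\R^+$, the set $\SJ$ has a least positive element $\lambda_0>0$; dilating by $c=1/\lambda_0$ replaces $(T,\SJ)$ by $(cT,c\SJ)$, a tiling of $\R^+$ whose least positive translation is $1$, and this $c$ is the constant in the statement. From now on I assume $1=\min(\SJ\setminus\{\bzero\})$.

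Next I would record two structural facts. (a) Every translate with $t\ge 1$ lies in $[1,\infty)$, so $[0,1)$ can be covered only by $T$ itself; hence $[0,1]\subset T$, and since $[0,1)$ is open in $\R^+$ we get $(0,1)\subset T^\circ$. (b) Distinct tiles have disjoint interiors and $T+t\supset[t,t+1]$, so the open unit intervals $\{(t,t+1):t\in\SJ\}$ are pairwise disjoint; in particular $\SJ$ is $1$-separated. Applying (b) with $1\in\SJ$ shows that the bottom block of $T$ is \emph{exactly} $[0,1]$: if $(0,\beta_1)\subset T^\circ$ with $\beta_1>1$, then $(1,\beta_1)\subset T^\circ\cap(T+1)^\circ$, contradicting disjointness. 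Thus $(1,1+\varepsilon)\cap T^\circ=\emptyset$ for small $\varepsilon>0$.

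The heart of the proof is to show that all boundary points of $T$ and all elements of $\SJ$ are integers. I would argue by smallest counterexample: let $\gamma$ be the infimum of the non-integer features, i.e. of $(\partial T\setminus\Z)\cup(\SJ\setminus\Z)$, assume it finite, and set $n=\lfloor\gamma\rfloor\ge 1$. Below $\gamma$ the tiling is genuinely integral, so $T\cap[0,n]$ is a union of unit cells $[k,k+1]$ at integer positions, each translate reaching below $\gamma$ has integer offset $\le n$, and the left part $(n,\gamma)$ of the cell $[n,n+1]$ is \emph{pure} (entirely in $T^\circ$ or entirely out). I would then split into cases according to whether $\gamma\in\SJ$ or $\gamma\in\partial T$ and whether $\gamma$ is a left- or right-endpoint. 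The recurring mechanism is a forced overlap or an uncoverable sub-cell. For example, if $\gamma\in\SJ$ and $(n,\gamma)\subset T^\circ$, then using $1\in\SJ$ the overlap argument above pins the $T$-block ending at $\gamma$ to be exactly $[n,\gamma]$, and then $(n+1,\gamma+1)$ is a nonempty open subset of both $(T+1)^\circ$ and $(T+\gamma)^\circ$ — a genuine overlap of two distinct tiles. In the remaining cases either an occupied integer-offset tile must cover the full cell $(n,n+1)$ and hence overlap the bottom block of $T+\gamma$, or the tile starting just to the right of $\gamma$ must have its block begin at an integer offset $s<\gamma$, producing a non-integer $T$-boundary $\gamma-s<\gamma$ and contradicting minimality of $\gamma$; in the left-endpoint case the sub-cell $(n,\gamma)$ simply cannot be covered. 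Every branch ends in a contradiction, so the set of non-integer features is empty.

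The step I expect to be the main obstacle is precisely this case analysis — ruling out that a non-integer defect hides in a non-integer \emph{offset} of $\SJ$ rather than in $\partial T$; this is exactly where the two normalizations do the work, the least translation being $1$ forcing the overlap of $T+1$ with $T+\gamma$ and the bottom block $[0,1]$ pinning the local geometry. A minor technical point I would dispatch first is that the infimum $\gamma$ is attained (equivalently, that $T$ has finitely many components below any level), which can be arranged directly from discreteness of $\SJ$ together with the bottom-block normalization. Once integrality is established, $\partial T\subset\Z$ forces $T^\circ=\bigsqcup_{k\in E}(k,k+1)$ with $E\subset\Z^+$ finite (by compactness), whence $T=\overline{T^\circ}=E+[0,1]$, which is the desired conclusion.
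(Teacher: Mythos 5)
You take essentially the same route as the paper's own treatment: although the paper formally cites Odlyzko for this one-dimensional statement, its Section~\ref{sec:regular} (Lemma~\ref{info-1} and Theorem~\ref{info-2}) proves the $n$-dimensional version by exactly your scheme --- normalize so that $\min(\SJ\setminus\{\bzero\})=1$, show $[0,1]\subset T$ and $\SJ\cap[0,1)=\{\bzero\}$, then push integrality of $\SJ$ and of $\partial T$ from left to right, the decisive contradiction being an overlap between $T+1$ and the offending tile. Indeed your key case, $(n+1,\gamma+1)\subset(T+1)^\circ\cap(T+\gamma)^\circ$, is precisely the paper's step in which $U=\bz+\bigl(0,\pi_1(t_2)-\pi_1(\bz)\bigr)\times(0,1)^{n-1}\subset T$ and then $(\be_1+U)\cap\bigl(t_2+(0,1)^n\bigr)$ has positive measure. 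The only genuine divergence is bookkeeping, and it is exactly where your weak point sits: the paper inducts over the well-ordered set of integer cells (statements \eqref{inter-1} and \eqref{inter-2}), so no infimum ever needs to be attained, whereas your minimal counterexample $\gamma$ ranges over the reals. Your claim that attainment of $\gamma$ (equivalently, finiteness of the components of $T$ below any level) follows ``directly from discreteness of $\SJ$ together with the bottom-block normalization'' is not justified; a priori local finiteness of $\partial T$ is essentially the conclusion of the theorem, so invoking it at the outset is close to circular. Fortunately the gap is repairable inside your own framework: if $\gamma$ is not attained then, $\SJ$ being discrete and $\partial T$ closed, $\gamma=m\in\Z$ and non-integer points of $\partial T$ accumulate at $m$ from above; every tile $T+k$ with integer $k\geq 1$ is pure on $(m,s)$, where $s$ is the unique possible element of $\SJ\cap(m,m+1)$ (or $s=m+1$ if there is none), so either some such tile contains all of $(m,s)$ and its interior overlaps the interior points of $T$ accumulating at $m$, or $T$ alone covers $(m,s)$, contradicting the accumulating boundary points --- the same dichotomy you already use. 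Alternatively you can simply adopt the paper's integer-cell induction, which sidesteps attainment altogether. Finally, note that, like the paper's Section~\ref{sec:regular}, your last line invokes the standing hypothesis $T=\overline{T^\circ}$, which the bare statement of the theorem omits but without which the conclusion is false (e.g.\ $T=[0,1]\cup\{3/2\}$ with $\SJ=\Z^+$ tiles $\R^+$ in the sense of Definition~\ref{def:packing}).
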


 For $x=(x_1,\dots, x_n)\in \R^n$, we use $\|x\|_1=\max|x_j|$ to denote the $1$-norm. We say $x>0$ (or $x\geq 0$) if $x_j>0$ (or $x_j\geq 0$) for all $j=1,\dots, n$.

\begin{lemma}\label{info-1} Let $(T,\SJ)$ be a local tiling  of $(\R^+)^n$ satisfying \eqref{eq:normal} and \eqref{eq:R}. Then
 $$\SJ \cap [0,1)^n=\{\boldsymbol{0}\} \text{ and } [0,1]^{n}\subset T.$$
Consequently, if $t, t'\in \SJ$, then $\|t-t'\|_1\geq 1$.
\end{lemma}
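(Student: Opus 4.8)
The plan is to prove the three claims in sequence, using the normalization \eqref{eq:normal} which guarantees $\be_1,\dots,\be_n\in\SJ$ together with the covering and packing properties. First I would establish $\SJ\cap[0,1)^n=\{\bzero\}$. Suppose there is a nonzero $t\in\SJ$ with $0\le t<\be_1+\cdots+\be_n$ coordinatewise (i.e. $t\in[0,1)^n$). Since $\bzero\in\SJ$ and $(T,\SJ)$ is a packing with $\bzero\in T$, the two tiles $T$ and $T+t$ must be disjoint in $\mu_n$-measure. The idea is to show that a small cube near the origin, say $[0,\eta]^n$, lies in $T$ (from the covering of a neighborhood of $\bzero$, as in Lemma \ref{lem:unique}), and then that $t$ being strictly inside the unit cube forces $(T)^\circ\cap(T+t)^\circ\neq\emptyset$, contradicting disjointness. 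The cleanest route is to exploit the normalization condition in \eqref{eq:normal}: if $t\in[0,1)^n\setminus\{\bzero\}$, then some coordinate $t_j\in(0,1)$, and I would argue that the ray structure near $\be_j$ forces an overlap, or alternatively use minimality of the nonzero points of $\SJ$ on each axis.

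Next I would prove $[0,1]^n\subset T$. Because $\be_1,\dots,\be_n\in\SJ$ and these are the minimal nonzero points of $\SJ$ on the respective axes by \eqref{eq:normal}, the tile $T$ alone must cover the open unit cube $[0,1)^n$: by Lemma \ref{lem:unique} the origin lies in only the tile $T$, and I would show that no other translate $T+t$ with $t\in\SJ\setminus\{\bzero\}$ can reach into the interior of $[0,1]^n$. Indeed, if $t\in\SJ\setminus\{\bzero\}$ met $[0,1)^n$, then since $\SJ\cap[0,1)^n=\{\bzero\}$ (the first claim), $t$ would have some coordinate $\ge 1$; combined with $t\ge\bzero$ and the geometry of $(\R^+)^n$, a point of $[0,1)^n\cap(T+t)$ would force $T+t$ and $T$ to overlap in measure, again a contradiction. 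Hence $[0,1)^n\subset T$, and taking closures with $T=\overline{T^\circ}$ gives $[0,1]^n\subset T$.

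Finally, the consequence $\|t-t'\|_1\ge 1$ for distinct $t,t'\in\SJ$ follows quickly: if $\|t-t'\|_1<1$ then $t-t'\in(-1,1)^n$, so after translating one tile onto the other, the overlap of the two unit cubes $t+[0,1]^n$ and $t'+[0,1]^n$ (both contained in the respective tiles by the second claim) would have nonempty interior, contradicting that $T+t$ and $T+t'$ are disjoint in measure. I would phrase this via: $(t+[0,1]^n)\cap(t'+[0,1]^n)$ has positive $\mu_n$-measure whenever $\|t-t'\|_1<1$, and since $[0,1]^n\subset T$ this overlap lies in $(T+t)\cap(T+t')$.

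I expect the main obstacle to be the first claim, $\SJ\cap[0,1)^n=\{\bzero\}$, since it requires carefully ruling out points $t$ that are interior to the unit cube but not on any coordinate axis; the overlap argument must be made rigorous by producing an honest open set common to $T^\circ$ and $(T+t)^\circ$, for which I would lean on the fact that a full neighborhood of $\bzero$ lies in $T^\circ$ (so that both $T$ and $T+t$ contain open neighborhoods of $\bzero$ and $t$ respectively) and on the minimality encoded in \eqref{eq:normal}. The remaining two claims are then largely bookkeeping with unit cubes and the packing condition.
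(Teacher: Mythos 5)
Your plan has a genuine gap, and it sits exactly where you predicted: the first claim $\SJ\cap[0,1)^n=\{\bzero\}$ is never actually proved. The mechanism you sketch --- a small cube $[0,\eta]^n\subset T$ near the origin plus $t\in[0,1)^n\setminus\{\bzero\}$ forcing $T^\circ\cap(T+t)^\circ\neq\emptyset$ --- does not work: $T$ contains a neighborhood of $\bzero$ and $T+t$ contains a neighborhood of $t$, and for a point such as $t=(1/2,\dots,1/2)$ these are simply disjoint; neither the ``ray structure near $\be_j$'' nor the minimality in \eqref{eq:normal} (which only constrains $\SJ$ \emph{on the coordinate axes}) produces a positive-measure overlap. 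This is precisely why the paper does not prove the two assertions sequentially. Instead it proves them \emph{simultaneously}, by induction on the faces $Q$ of $(\R^+)^n$: for $D=Q\cap[0,1)^n$ one shows both $\SJ\cap D=\{\bzero\}$ and $D\subset T$. The containment statements on the $(m-1)$-dimensional faces are the essential input for excluding a nonzero $t\in\SJ$ whose first $m$ coordinates all lie in $(0,1)$: from $D_j\subset T$ and a separation constant $\delta$ (the least distance from points of $\SJ\cap[0,1)^n$ off a face to that face) one first gets that $T$ contains the slabs $L_j=D_j+[0,\delta')D_j^\perp$ along the coordinate hyperplanes, and then the explicit computation that $(L_1+t)\cap(L_m+\be_m)$ has positive Lebesgue measure contradicts the packing property against the tile $T+\be_m$. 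Your ordering --- finish the first claim with no information about $T$ beyond a small neighborhood of $\bzero$, then deduce the second --- is structurally backwards relative to this; some form of ``$T$ contains large pieces of the boundary faces of the cube'' has to be available \emph{before} interior points of the cube can be excluded from $\SJ$.

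Two smaller points. Once the first claim is granted, your derivation of $[0,1]^n\subset T$ is essentially right, but note that any tile $T+t$ with $t\geq\bzero$ and some coordinate $t_j\geq 1$ satisfies $(T+t)\cap[0,1)^n=\emptyset$ outright (since $T+t\subset t+(\R^+)^n$); there is no need for, nor validity to, an overlap argument from a single intersection point. Also, the covering hypothesis holds only on $B(\bzero,R)$, and a priori one does not know $[0,1)^n\subset B(\bzero,R)$, so passing from $[0,1)^n\cap B(\bzero,R)\subset T$ to $[0,1)^n\subset T$ requires the connectedness argument of Lemma \ref{lem:U-T}; taking closures is then immediate from compactness of $T$. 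Your proof of the final consequence $\|t-t'\|_1\geq 1$ agrees with the paper and is fine.
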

\begin{proof}

Let $Q$ be a $m$-face of $(\R^+)^n$ with $1\leq m\leq n$,  denote
$$D=Q\cap [0,1)^n.$$
Let $Q^\perp$ be the $(n-\dim Q)$-face of $(\R^+)^n$ complement to $Q$, that is,
$Q+ Q^\perp=(\R^+)^n$. Denote $D^\perp=Q^\perp\cap [0,1)^n$, then $D+D^\perp=[0,1)^n$.

For a $m$-face $Q$ with $1\leq m\leq n-1$, we define $\delta_Q$  as
$$
\delta_Q=\left \{
\begin{array}{l}
1,  \text{ if } \SJ\cap [0,1)^n\setminus Q=\emptyset;\\
\min\{ d(t,Q);~ t\in \SJ\cap [0,1)^n \setminus Q\},  \text{ otherwise.}
\end{array}
\right .
$$
 Set
\begin{equation}\label{eq:small}
\delta=\min\{\delta_Q;~Q \text{ is a face of } (\R^+)^n\}.
\end{equation}

We shall prove by induction on $\dim Q$ that
\begin{equation}\label{eq:Q-D}
\SJ\cap D=\{\bzero\}, \text{ and } D \subset T
\end{equation}

The result holds when $m=1$ by the assumption \eqref{eq:normal}.
Suppose  $m\geq 2$ and \eqref{eq:Q-D} holds for $m-1$.
Without loss of generality, we assume $Q$ is generated by $\be_1,\dots, \be_m$.

Denote
$$
Q_j=\text{span}(\{\be_1,\dots, \be_m\}\setminus\{\be_j\}), \quad D_j=[0,1)^n\cap Q_j.
$$
By the induction hypothesis, we have
  \begin{equation}\label{low-cube}
 D_j\subset T, \ \text{and} \
\SJ\cap D_j=\{\boldsymbol{0}\}, \ \ 1\leq j\leq m.
\end{equation}
Denote $\delta'=\delta/\sqrt{n}$  and
$$
L_j=D_j+[0,\delta')D_j^\perp.
$$
Pick $x\in L_j$ and let $T+t$ be the tile containing $x$.
Notice that $x\in [0,1)^n$ and $d(x, D_j)<\delta$.
It follows that  $t\in [0,1)^n$ and  $d(t,D_j)<\delta$ since $x-t\geq 0$.
Moreover, since $\dim D_j\leq n-1$, by the definition of $\delta$, we have $t\in D_j$, which forces $t=\bzero$
 by our induction hypothesis. This proves that
\begin{equation}\label{eq:perp}
L_j\subset T, \ j=1,\dots, m.
\end{equation}
Suppose on the contrary that there exists $t=(t_1,\dots, t_m, 0,\dots, 0)\in \SJ\cap D$ and $t\neq \bzero$, then by \eqref{low-cube},
$t_j>0$ for $j=1,\dots, m$. It is seen that
$$L_1=[0,\delta')\times [0,1)^{m-1}\times [0,\delta')^{n-m} \ \ \text{and} \ \
L_m=[0,1)^{m-1}\times[0,\delta')\times [0,\delta')^{n-m}.$$
We have that $$L_1+t=[t_1,t_1+\delta')\times \prod_{j=2}^m [t_j,t_j+1)\times [0,\delta')^{n-m}.$$
$$L_m+\be_m=[0,1)^{m-1}\times[1,1+\delta')\times [0,\delta')^{n-m}.$$
Hence $$(L_1+t)\cap (L_m+\be_m)=[t_1,\min\{1,t_1+\delta'\})\times \prod_{j=2}^{m-1} [t_j,1)\times
[1,1+\min\{t_m,\delta'\})\times [0,\delta')^{n-m},$$
which implies that $(T+t)\cap (T+\be_m)$ has positive Lebesgue measure. This contradiction proves that $\SJ\cap D=\{\bzero\}$, the first assertion of \eqref{eq:Q-D}.


Now $\SJ\cap D=\{\bzero\}$ implies that $T$ is the only tile intersecting $D$.
It follows that $D\cap B(\bzero,R)$ is a subset of $T$. By Lemma \ref{lem:U-T} we list below,
 $D$ is   a subset of $T$, which verifies the second assertion of  \eqref{eq:Q-D}.

Finally, set $D=[0,1)^n$ in \eqref{eq:Q-D}, we obtain  the lemma.
\end{proof}

 \begin{lemma}\label{lem:U-T}
  If $U$ is a connected set and $B(\bzero,R)\cap U\subset T$, then $U\subset T$.
  \end{lemma}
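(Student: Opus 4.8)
The plan is to prove the lemma by a connectedness (clopen) argument applied to the set $U \cap T$, and the single geometric fact that makes everything collapse is that the tile $T$ itself already lies inside $B(\bzero,R)$. Indeed, since $\bzero \in T$ and $\diam(T) < R$, every point $t \in T$ satisfies $|t| = |t - \bzero| \leq \diam(T) < R$, so that
$$T \subset B(\bzero, R).$$
I would record this observation at the very start, as it is the only input from the normalization that the argument needs.

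Next I would identify $U \cap T$ with $U \cap B(\bzero, R)$ via two opposite inclusions. On one hand, the inclusion $T \subset B(\bzero,R)$ just obtained gives $U \cap T \subset U \cap B(\bzero, R)$. On the other hand, the hypothesis $B(\bzero,R) \cap U \subset T$ gives $U \cap B(\bzero, R) \subset U \cap T$. Combining the two,
$$U \cap T = U \cap B(\bzero, R).$$
This identity is the crux of the proof: its left-hand side is relatively closed in $U$ because $T$ is compact, hence closed, while its right-hand side is relatively open in $U$ because $B(\bzero,R)$ is open. Consequently $U \cap T$ is simultaneously open and closed in the relative topology of $U$.

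Finally I would invoke the connectedness of $U$: a clopen subset of a connected space is either empty or the whole space. Since $U \cap T = U \cap B(\bzero, R)$ is nonempty, it must equal $U$, whence $U \subset T$, as claimed. I do not expect a serious obstacle here; the only point that needs a word of care is the nonemptiness of $U \cap B(\bzero,R)$, which is exactly where one uses that $U$ meets the ball. This is implicit in the way the lemma is applied in the proof of Lemma \ref{info-1}, where the relevant connected set (a face of the half-open cube) always contains the origin, so that $\bzero \in U \cap T$ and the clopen set is genuinely nonempty.
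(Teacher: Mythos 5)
Your proof is correct, and it takes a genuinely different (and cleaner) route than the paper's. The paper argues by contradiction: if $U\not\subset B(\bzero,R)$, then, since $U$ is connected and meets the ball, the continuous function $x\mapsto|x|$ attains on $U\cap B(\bzero,R)$ values arbitrarily close to $R$; such points lie in $T$ by hypothesis, which is impossible because $\bzero\in T$ and $\diam(T)<R$ force $T\subset\overline{B(\bzero,\diam(T))}$. Hence $U\subset B(\bzero,R)$, and then $U=U\cap B(\bzero,R)\subset T$. You instead record $T\subset B(\bzero,R)$ at the outset, derive the identity $U\cap T=U\cap B(\bzero,R)$, and apply the clopen characterization of connectedness, using closedness of the compact set $T$ and openness of the ball. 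Both arguments rest on the same geometric fact---that $T$ sits strictly inside $B(\bzero,R)$ because $\bzero\in T$ and $\diam(T)<R$---but your clopen formulation replaces the paper's informal limiting step (``$|x|$ as close to $R$ as we want'') by a standard topological one, at the modest cost of also invoking closedness of $T$. You also make explicit the one caveat that the paper's proof shares but never states: if $U\cap B(\bzero,R)=\emptyset$ the hypothesis is vacuously true and the conclusion can fail, so the lemma genuinely requires $U$ to meet the ball. As you note, this holds in every application (in Lemma \ref{info-1} the set $D$ contains $\bzero$, and in Theorem \ref{info-2} the open set $U$ has the point $\bz\in B(\bzero,R)$ on its boundary), and the same implicit assumption underlies the paper's phrase ``there exists $x\in U\cap B(\bzero,R)$.''
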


  \begin{proof}If $U$ is not a subset of $B(\bzero,R)$, then there exists $x\in U\cap B(\bzero,R)$ such that $|x|$ is as closer to $R$
 as we want, so $x\not\in T$, which is a contradiction. Therefore, we must have
  $U\subset T$.
  \end{proof}

Let `$\prec$' be the order on $(\mathbb{R}^+)^n$ defined by
$\ba \prec \bb$ if
 $\bb-\ba\geq 0$  and  $\ba\neq \bb.$

\begin{thm}\label{info-2} Let $(T,\SJ)$ be a local tiling  of $(\R^+)^n$ satisfying \eqref{eq:normal}
and \eqref{eq:R}.   Then

(i) $\SJ\subset (\mathbb{Z}^+)^n$.

(ii) There exists a subset
$E\subset (\mathbb{Z}^+)^n$ such that $T=E+[0,1]^n.$
\end{thm}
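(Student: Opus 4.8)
The plan is to prove the two assertions together by induction on the dimension $n$, using Theorem \ref{thm:odly} (Odlyzko's one-dimensional result) as the base case and Lemma \ref{info-1} as the crucial structural input. First I would fix notation: by Lemma \ref{info-1} we already know that $\SJ\cap[0,1)^n=\{\bzero\}$, that $[0,1]^n\subset T$, and that distinct elements of $\SJ$ differ by at least $1$ in the $1$-norm. This last fact is the separation property that will let me ``peel off'' the tiles layer by layer.

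The key step is to show that $T$ is \emph{cylindrical} in a neighborhood of each boundary face, so that the tiling decomposes as a product. Concretely, I would fix a coordinate direction, say $\be_n$, and examine the section $(T)_x$ and the way translates $T+t$ with $t\in\SJ$ stack up along that direction. Because $[0,1]^n\subset T$ and neighboring lattice translates are forced by the measure-disjointness (items (i) and (ii) of the local tiling) and by the $1$-norm separation, one shows that whenever a point of $\SJ$ has a nonzero $n$-th coordinate, that coordinate must in fact be a positive integer; otherwise one produces an overlap of positive Lebesgue measure between two translates, exactly as in the overlap computation at the end of the proof of Lemma \ref{info-1}. Carrying this out for every coordinate gives $\SJ\subset(\Z^+)^n$, which is assertion (i). The order $\prec$ is used here to process the elements of $\SJ$ from smallest to largest, guaranteeing that when we reach a given $t\in\SJ$ all ``lower'' translates have already been shown to be integer-translated unit-cube configurations.

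For assertion (ii), once $\SJ\subset(\Z^+)^n$ is established, I would argue that $T$ itself is a finite union of unit cubes with integer corners. Set $E=\{m\in(\Z^+)^n : m+[0,1]^n\subset T\}$; the inclusion $E+[0,1]^n\subset T$ is immediate, and since $T=\overline{T^\circ}$ it suffices to show the reverse inclusion holds up to measure, i.e. that $T$ contains no ``fractional'' cube. Here the covering and packing hypotheses combine: the integer translates $\{[0,1]^n+t : t\in\SJ\}$ together with $E$ must exactly fill $(\R^+)^n\cap B(\bzero,R)$ without measure overlap, and the separation $\|t-t'\|_1\ge 1$ forces each occupied unit cell to be covered by a single translate of $T$ aligned to the integer grid. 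Applying Lemma \ref{lem:U-T} (to pass from the ball $B(\bzero,R)$ to all of the relevant region) upgrades the measure statement to the set equality $T=E+[0,1]^n$.

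The main obstacle I anticipate is the inductive bookkeeping in assertion (i): ruling out a non-integer coordinate of some $t\in\SJ$ requires exhibiting a genuine positive-measure overlap, and this must be done while simultaneously knowing the fine structure of $T$ near the relevant face. The cleanest route is to mimic the explicit slab/overlap argument already used in Lemma \ref{info-1}: restrict attention to a thin slab $X\cap(\text{hyperplane}+[0,\delta]\be_n)$ in which only finitely many translates participate, reduce to a lower-dimensional tiling on the grid, and invoke the induction hypothesis there. Making the ``only finitely many translates participate'' reduction rigorous — choosing $\delta$ small relative to the gaps determined by $\S\J$ and $R$ — is the technically delicate point, but it parallels the $\delta$-choice in Lemma \ref{info-1} and in Lemma \ref{lem:trapezoid}, so no genuinely new idea should be needed beyond careful estimates.
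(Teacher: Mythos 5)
Your proposal has a genuine gap, and it sits exactly where you flag the ``key step'': the claim that $T$ is cylindrical near each boundary face, so that a thin slab of the tiling reduces to an $(n-1)$-dimensional tiling. This is not something you can assume or extract from Lemma~\ref{info-1}; it is essentially the conclusion of the theorem restricted to a neighborhood of the face. A priori a compact $T=\overline{T^\circ}$ satisfying Lemma~\ref{info-1} could contain, say, a wedge $\{x:\ 1\le x_1\le 2,\ 0\le x_n\le x_1-1\}$ touching the face $\{x_n=0\}$, and then $T\cap\{0\le x_n\le\delta\}$ is not a product for any $\delta>0$. For the same reason the slab $(\R^+)^n\cap\{0\le x_n\le\delta\}$ remains an $n$-dimensional tiling problem (moreover translates $T+t$ with $0<t_n<\delta$, $t_n$ non-integral --- not yet excluded --- can intrude into it), so the induction on dimension never gets started. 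A further, independent, problem is your derivation of (ii) from (i): ``the separation $\|t-t'\|_1\ge 1$ forces each occupied unit cell to be covered by a single translate'' is an assertion, not an argument; separation alone does not prevent a cell $\bz+(0,1)^n$ from being split between the tile $T$ and a translate $T+t_2$ anchored at a point $t_2$ inside that very cell, and ruling this out is the hardest point of the entire proof. (Also, Theorem~\ref{thm:odly} concerns genuine tilings of $\R^+$, not local tilings, so even your base case would need a separate argument.)

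There is also a circularity in the order of your two assertions. To show that a non-integral $t\in\SJ$ creates a positive-measure overlap ``as in Lemma~\ref{info-1}'', you must already know that the region below $t$ is exactly filled by integer-aligned unions of unit cubes; that is assertion-(ii)-type information, which you postpone until after (i). The paper resolves this by dropping both the dimension induction and the separation shortcut: it runs a single induction over lattice points $\bz\in(\Z^+)^n\cap B(\bzero,R)$ with respect to $\prec$, proving \emph{simultaneously} the two statements \eqref{inter-1} (every $t\in\SJ$ with $t\preceq\bz$ is integral) and \eqref{inter-2} (every open cell $\bu+(0,1)^n$ with $\bu\preceq\bz$ lies in exactly one tile). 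Step \eqref{inter-1} follows from \eqref{inter-2} for smaller cells, since the box $[0,z_1+1]\times\cdots\times[0,z_n+1]$ minus the last cube is already filled integrally and a non-integral $t\prec\bz$ would make $t+[0,1]^n$ overlap it. Step \eqref{inter-2} needs the one idea your outline does not contain: if the last cube meets two tiles, the case analysis reduces to the tiles $T$ and $T+t_2$ with $\bz\prec t_2\in\bz+[0,1)^n$, and the contradiction comes from showing that the strip $U=\bz+(0,\pi_1(t_2)-\pi_1(\bz))\times(0,1)^{n-1}$ (here $\pi_1$ is the first coordinate) lies in $T$, whence $T+\be_1$ and $T+t_2$ overlap in positive measure. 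Some substitute for this projection argument, excluding a tile anchored at a non-integral point inside a unit cell, is indispensable and is what your proposal is missing.
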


\begin{proof} 
To prove the theorem, we need only prove the following two statements:
For each $\boldsymbol{z}\in (\mathbb{Z}^+)^n\cap B(\bzero,R),$
\begin{equation}\label{inter-1}
\{t\in \SJ;~t\preceq \bz\}\subset (\Z^+)^n,
\end{equation}
\begin{equation}\label{inter-2}
  \bu+(0,1)^n \text{ belongs to exactly one tile   for all } \bu\in (\Z^+)^n\cap B(\bzero,R) \text{ with }\bu\preceq \bz.
\end{equation}

In the following, we prove the above statements by induction on $\bz$.
If $\boldsymbol{z}=\boldsymbol{0}$, the statements are valid by Lemma \ref{info-1}.
Assume  $\boldsymbol{0}\prec \boldsymbol{z}$.
Suppose \eqref{inter-1} and \eqref{inter-2} hold   for
all $\bz'\in (\Z^+)^n$ with  $\boldsymbol{z}'\prec \boldsymbol{z}$, and
we  show they hold for $\boldsymbol{z}$ in the following.

First, we prove \eqref{inter-1}.
Write $\bz=(z_1,\dots, z_n)$.  By the induction hypothesis of \eqref{inter-2}, we have
$$
Y=\bigcup \{T+t;~ t\in \SJ\cap (\Z^+)^n \text{ and } t\prec \bz\}
$$
covers the set
$$
\Omega=[0,z_1+1]\times \cdots \times [0,z_n+1]\setminus (\bz+[0,1]^n),
$$
which is a rectangle missing  the `last cube' $\bz+[0,1]^n$. So if $t\in \SJ$ and $t\prec \bz$, then $t+[0,1]^n$ and $Y$ overlap, which forces $t\in (\Z^+)^n$. Therefore, no matter $\bz\in \SJ$ or not,   \eqref{inter-1} holds.

Next, we prove \eqref{inter-2}. Assume on the contrary  \eqref{inter-2} is false for $\bz$, then
the `last cube' $\bz+(0,1)^n$ intersects  at least two tiles. Denote the tiles intersecting $\bz+(0,1)^n$
by $T+t_1$, $T+t_2$, $\dots,$ $T+t_\ell$. In particular, $\bz$ does not belong to $\SJ$. Clearly
$$\{t_1,\dots, t_\ell\}\subset [0,z_1+1)\times \cdots \times [0,z_n+1).$$

Since $\Omega$ is covered by tiles $T+t$ with integral $t\prec \bz$, we conclude that $t_j$
is either integral and precedes $\bz$, or $t_j$ belongs to the `last cube' $\bz+[0,1]^n$.
In the formal case,    we must have $t_j=\bzero$,  for otherwise,
$T+t_j$ contains only a proper subset of $\bz+(0,1)^n$, so  $T$ contains only a proper subset of $(\bz-t_j)+(0,1)^n$, which contradicts our induction hypothesis on \eqref{inter-2}.
In the later case, by Lemma \ref{info-1},  at most one element of $\SJ$ belongs to $\bz+[0,1)^n$.
  It follows that there are exactly two tiles intersecting $\bz+(0,1)^n$, one
  is $T$, and the other one, which we denote by   $T+t_2$, satisfies
 $\bz\prec t_2$. So
\begin{equation}\label{dE_3}
(\bz+(0,1)^n)\cap B(\bzero,R) \subset T\cup (T+t_2).
\end{equation}
Let $\pi_j$ be the projection such that $\pi_j(x_1,\dots, x_n)=x_j$.
Notice that $\pi_j(t_2)>\pi_j(\bz)$  for at least one $j\in \{1,\dots, n\}$,  without loss of generality, let us assume that $\pi_1(t_2)>\pi_1(\bz)$.
Clearly,
$(T+t_2)\cap (\bz+(0,1)^n)= (t_2+[0,1]^n)\cap (\bz+(0,1)^n)$, so the open rectangle
$$U=\bz+(0, \pi_1(t_2)-\pi_1(\bz))\times (0,1)^{n-1},$$
as a subset of $\bz+(0,1)^n$, is not covered by $T+t_2$. Consequently, $U\cap B(\bzero,R)$ must be covered by $T$.
It follows that  $U\subset T$ by Lemma \ref{lem:U-T}.

Recall that   $\be_1\in \SJ$. Now
$(T+\be_1)\cap (T+t_2)$ contains $(\be_1+U)\cap (t_2+(0,1)^n)$ as a subset,
and the later one has positive Lebesgue measure,  which is a contradiction.
This contradiction proves that
$$(\bz+(0,1)^n)\cap B(\bzero,R)\subset T \text{ or } T+t_2.$$
In the former case, $\bz+(0,1)^n \subset T$ by Lemma \ref{lem:U-T};
in the later case, $t_2=\bz$ and clearly $\bz+(0,1)^n\subset T+\bz$.
This  verifies \eqref{inter-2} and finishes the proof of the theorem.
\end{proof}


\noindent \textbf{Proof of Theorem \ref{thm:Main-1} (ii).} It is the immediate consequence of Theorem \ref{info-2}. $\Box$

\medskip

\noindent \textbf{Proof of Corollary \ref{Main-2}.}  Let $(T, \SJ)$ be a tiling of $(\R^+)^n$ with $T=\overline{T^\circ}$. Then there is a diagonal matrix $U$ such that
$(T'=UT, \SJ'=U\SJ)$ satisfies the normalization condition \eqref{eq:normal}.
For any $R>\diam(T')$,  $(T', \SJ'\cap B(\bzero,R))$ is a local tiling of $(\R^+)^n$ satisfying
the conditions of Theorem \ref{info-2}. It follows that
$T'=E+[0,1]^n$ for some $E\subset (\Z^+)^n$, and
$\SJ'\cap B(\bzero,R)\subset (\Z^+)^n$ for all $R>0$, so $\SJ'\subset (\Z^+)^n$.

Finally, notice that  $(E+[0,1]^n, \SJ')$ is a tiling of $(\R^+)^n$ if and only if $(E, \SJ')$ is a tiling of $(\Z^+)^n$.
$\Box$

\section{\textbf{Self-affine tiles with polyhedral corners}}\label{sec:horn}
Let $T=T(\bA,\SD)$ be a $n$-dimensional self-affine  tile with expanding matrix $\bA$ and digit set $\SD$. Denote
$$
\SD_k=\SD+\bA \SD+\cdots+\bA^{k-1}\SD,
$$
then iterating $\bA T(\bA,\SD)=T(\bA, \SD)+\SD$ $k$-times, we obtain
 $$
 \bA^k T(\bA,\SD)=T(\bA,\SD)+\SD_k.
 $$

Recall that $T$ has a polyhedral corner at $x_0$ means that there is a convex polyhedral cone $C$ and a number $r>0$
 such that
 \begin{equation}\label{def:corner}
 T\cap B_n(x_0,r)=x_0+B_n(\bzero,r)\cap C.
 \end{equation}

\medskip

 \noindent \textbf{Proof of Theorem \ref{Main-3}.}   Take $k\geq 1$.
 Let $B(\bzero,\ell_k)$ be the maximal ball centered at $\bzero$ and  contained in $\bA^kB(\bzero,1)$.
Since $\bA$ is expanding, it is seen that $\ell_k\to \infty$ when $k\to \infty$.
Applying $\bA^k$ to both sides of \eqref{def:corner}, we have
$$
\bA^kT\cap \bA^kB(x_0,r)=\bA^kx_0+(\bA^k C\cap A^kB(\bzero,r)).
$$
Using $\bA^kT=T+\SD_k$, we deduce that
\begin{equation}\label{eq:9.2}
(T+\SD_k)\cap \bA^kB(x_0,r)= \bA^kx_0+(\bA^k C\cap A^kB(\bzero,r)).
\end{equation}
Notice that $B(\bA^kx_0, r\ell_k)\subset \bA^kB(x_0,r)$. Let
$$\SJ_k=\{t\in \SD_k;~  T+t \text{ intersects }\bA^kx_0+ \bA^kC\cap B(\bzero,r\ell_k-\diam(T))\}.$$
Since $T+\SD_k$ is a covering of  $\bA^kx_0+(\bA^k C\cap \bA^kB(\bzero,r))$,
$T+\SJ_k$ is a covering of
$$\bA^kx_0+ \bA^kC\cap B(\bzero,r\ell_k-\diam(T)).$$

On the other hand,
$T+\SJ_k\subset T+\SD_k=\bA^k T$, and clearly $T+\SJ_k\subset B(\bA^kx_0,r\ell_k)$; these together
with \eqref{eq:9.2} imply that
$$
T+\SJ_k\subset \bA^kT\cap B(\bA^kx_0,r\ell_k) \subset \bA^kx_0+\bA^k C,
$$
which proves that $(T, \SJ_k-\bA^kx_0)$ is a packing of $\bA^kC$.

Let $k$ be large enough so that $\ell_kr>2\diam(T)$, then $(T, {\cal J}_k-\bA^kx_0)$ is a `large' local tiling
of $\bA^kC$. Hence, by Theorem  \ref{Main-1}, $A^kC$, and also $C$,  are regular, and
  $T$ is a finite union of translations of $[0,1]^n$ up to a linear transformation.
 $\Box$

 \medskip



\begin{appendix}
\section{\textbf{Proof of Lemma \ref{lem:trapezoid}}. }

\begin{proof} For simplicity, we identify $\R^2$ to the complex plane $\mathbb C$.
Let $L_0=\{x+h \mi;~x\in [a,b]\}$ be the base line with longer line.
Assume that $t_1, t_2,\cdots, t_p$ are points in $L$ from left to right.
Suppose on the contrary that $(T, \{t_0,t_1,\dots, t_p\})$ is a tiling of $A$.
Let
$$I=\{x+y(a+h\mi); x\in [0,t_1], y\in [0,1]\}$$
be a parallelogram on the left part of $A$. Clearly $I\subset T$.

Let $M$ be the largest integer such that $Mt_1<1.$
We claim that
$T\cap \left(\bigcup_{m=0}^{M-1}(I+mt_1)\right)$ is a union of translations of $I$.
To prove this, we need only prove the following two statements: for each integer $m,~0\leq m\leq M-1$, we have

$(i)$ $\SJ\cap [0, mt_1]\subset t_1\Z^+$;

$(ii)$ For every integer $0\leq u\leq m$, $I+ut_1$ belongs to one tile except a measure zero set.

We prove (i) and (ii) by induction on $m$.
Clearly (i) and (ii) holds for $m=0$.
Now we assume that (i) and (ii) holds for $m-1$ with $m\geq 1$.

First, we prove (i).
If $t\in\SJ\cap [0, mt_1)$, then $t+I$ and $\bigcup_{j=0}^{m-1}(I+jt_1)$ overlap,
 we have $t\notin ((m-1)t_1,mt_1)$ by the induction hypothesis of (ii). Therefore, by the induction hypothesis of (i), no matter $mt_1\in \SJ$ or not, (i) holds for $m$.

Now we prove (ii).
Suppose on the contrary that (ii) is false. Then  $I+mt_1$ does not belong to one tile.
This first implies that $mt_1\not\in \SJ$. Secondly, if
there exists $1\leq m'\leq m-1$, such that $m't_1\in \SJ$ and
$0<\mu((T+m't_1)\cap (I+mt_1))<\mu(I)$,
then $0<\mu(T\cap(I+(m-m')t_1))<\mu(I)$,
which contradicts the assumption (ii).
Therefore, if a tile $T+t$  satisfying that
$0<\mu((T+t)\cap (I+mt_1))<\mu(I)$,
then either $t=0$, or $mt_1<t<(m+1)t_1$. In the latter case, there is only one $t$ satisfying this property, and we denote it by $t^*$.
Then
$$I+mt_1\subset T\cup (T+t^*).$$
Denote $U=\{x+y(a+h\mi);~x\in[mt_1,t^*),y\in[0,1]\}$.
By $U\cap (T+t^*)=\emptyset$ and the above equation, we have $U\subset T$.
Then the intersection of $T+t_1$ and $T+t^*$ contains
$U+t_1$ as a subset, which is a contradiction.
So (ii) holds for $m$.

Since $t_p$ is the rightmost point of $\SJ$, $T+t_p$ must contains a relative neighborhood $B(1,r)\cap A$ of $1$, for all small enough $r(<1-Mt_1)$.
Moreover, we have
$
B(1,r)\cap A=(T+t_p)\cap B(1,r),
$
thus
\begin{equation}\label{eq:t_p}
\left(B(1,r)\cap A\right)-t_p=T\cap B(1-t_p,r).
\end{equation}
On the other hand, since $0\leq 1-t_p\leq Mt_1$ and $T\cap \left(\bigcup_{m=0}^{M-1}(I+mt_1)\right)$ is a union of translations of $I$,
then for small enough $r$, $T\cap B(1-t_p,r)$ is a half ball or a translation of $I\cap B(0,r)$,
or a translation of $I\cap B(t_1,r)$,
which contradicts with the shape of $T\cap B(1-t_p,r)$ in \eqref{eq:t_p}.
The lemma is proved.
\end{proof}
\end{appendix}

\end{document}